\theoremstyle{plain}
\newtheorem{theorem}{Theorem}[section]
\newtheorem{lemma}[theorem]{Lemma}
\newtheorem{proposition}[theorem]{Proposition} 
\newtheorem{corollary}[theorem]{Corollary}
\theoremstyle{definition} 
\newtheorem{remark}[theorem]{Remark}
\newcommand{\eqnum}{\refstepcounter{equation}\textup{\tagform@{\theequation}}}
\begin{document}

\author{Matteo Dalla Riva  \thanks{Dipartimento di Ingegneria, Universit\`a degli Studi di Palermo, Viale delle Scienze, Ed. 8, 90128 Palermo, Italy {\tt matteo.dallariva@unipa.it}}, Pier Domenico Lamberti \thanks{Dipartimento di Tecnica e Gestione dei Sistemi Industriali, Universit\`a degli Studi di Padova, Stradella S. Nicola 3, 36100 Vicenza,  Italy
{\tt pierdomenico.lamberti@unipd.it}},  Paolo Luzzini \thanks{Dipartimento di Scienze e Innovazione Tecnologica, Universit\`a del Piemonte Orientale, Viale Teresa Michel 11, 15121 Alessandria, Italy {\tt paolo.luzzini@uniupo.it}}, Paolo Musolino \thanks{Dipartimento di Matematica `Tullio Levi Civita', Universit\`a degli Studi di Padova, Via Trieste 63, 35121 Padova, Italy  {\tt paolo.musolino@unipd.it }}}

\title{Shape sensitivity analysis of Neumann-Poincar\'e eigenvalues}

\date{Jannuary 27, 2025}

\maketitle

\abstract{This paper concerns the eigenvalues of the Neumann-Poincar\'e operator, a boundary integral operator associated with the harmonic double-layer potential. Specifically, we examine how the eigenvalues depend on the support of integration and prove that the map associating the support's shape to the eigenvalues is real-analytic. We then compute its first derivative and present applications of the resulting formula. The proposed method allows for handling infinite-dimensional perturbation parameters for multiple eigenvalues and perturbations that are not necessarily in the normal direction.}  
 
\vspace{10pt}

\noindent
{\bf Keywords:} Neumann-Poincar\'e operator; spectrum; shape sensitivity analysis; analyticity; optimization. 
\vspace{9pt}

\noindent   
{\bf 2020 MSC:} 31A10; 31B10; 35B20; 35P99; 47A10; 47A11; 47A55

\tableofcontents

\section{Introduction}\label{sec:intro}

In this paper, we investigate the behavior of the eigenvalues of the Neumann-Poincar\'e (NP) operator when the support of integration is perturbed. We prove an analyticity result for the dependence on a boundary parameterization, derive a formula for the differential, and explore several implications, particularly in relation to a Rellich-Pohozhaev-type formula and an extension of a result that suggests the stability of certain sums of eigenvalues. This, in turn, would imply that the $n$-dimensional sphere minimizes the second-largest NP-eigenvalue. Our results can be compared to those of Grieser \cite{Gr14} and Ando et al.~\cite{AnKaMiPu21}, and are derived through a modification of \cite{LaLa02, LaLa04} approach. In this introduction, we describe how our results differ from the existing literature and what improvements they provide.

To proceed, we begin by introducing details. We fix a natural number $n \ge 2$ that represents the space dimension and consider a domain $\Omega \subseteq \mathbb{R}^n$ that satisfies the following conditions:
\begin{equation}\label{Omega_def}
\begin{split}
&\text{$\Omega$ is a bounded open subset of $\mathbb{R}^{n}$ of class $ C^{1,\alpha} $ for some $ \alpha \in (0,1)$,}\\
&\text{it is connected, and its exterior $\mathbb{R}^n \setminus \overline{\Omega}$ is also connected.}
\end{split}
\end{equation}
Then, we denote by $E_n : \mathbb{R}^n \setminus \{0\} \to \mathbb{R}$ the standard fundamental solution of the Laplace operator $-\Delta := -\sum_{j=1}^n \partial^2_{x_j}$. This function, as we recall, maps $x \in \mathbb{R}^n \setminus \{0\}$ to
\[
E_{n}(x):=
\left\{
\begin{array}{lll}
-\frac{1}{2\pi}\log |x| \qquad &   \text{if}\ n=2\,,
\vspace{.1cm}\\
-\frac{1}{(2-n)s_{n}}|x|^{2-n}\qquad &   \text{if}\ n\geq3\,,
\end{array}
\right.
\]
where $s_n$ is the $(n-1)$-dimensional measure of the unit sphere in $\mathbb{R}^n$.

In literature, the term Neumann-Poincar\'e operator may refer to both $K_{\partial \Omega}$ and $K^*_{\partial \Omega}$, the operators from $L^2(\partial \Omega)$ to itself that map a function $\psi$ to 
\[
K_{\partial \Omega}[\psi](x) := \int_{\partial \Omega} \nu_{\Omega}(y) \cdot \nabla E_n(x - y) \psi(y) \, d\sigma_y \quad \text{for all } x \in \partial \Omega,
\]
and to
\[
K^*_{\partial \Omega}[\psi](x) := -\int_{\partial \Omega} \nu_{\Omega}(x) \cdot \nabla E_n(x - y) \psi(y) \, d\sigma_y \quad \text{for all } x \in \partial \Omega,
\]
respectively. Here $\nu_{\Omega}$ denotes the outer normal vector field to the boundary $\partial \Omega$ of $\Omega$, and $d\sigma$ is the area element of the $(n-1)$-dimensional manifold $\partial \Omega$ embedded in $\mathbb{R}^n $.

The significance of the operators \( K_{\partial \Omega} \) and \( K^*_{\partial \Omega} \) lies in their strict connection to the Dirichlet and Neumann problems for the Laplace operator. In the following section, we will explore this relationship in greater detail. For now, we simply note that, under the assumption \eqref{Omega_def}, \( K_{\partial \Omega} \) and \( K^*_{\partial \Omega} \) are compact on \( L^2(\partial \Omega) \) and are adjoints of one another. Consequently, they share the same spectrum, which, apart from 0, consists of a countable set of real eigenvalues \( \lambda_j \). If this set is infinite, the eigenvalues form a sequence converging to 0. 

Moreover, as observed by Kellogg \cite[Chap.~XI, Sec.~11]{Ke29}, the spectrum of \( K_{\partial \Omega} \) and \( K^*_{\partial \Omega} \) lies within the interval \([-1/2, 1/2]\), with \( 1/2 \) always being an eigenvalue, while \(-1/2\) is an eigenvalue only if the exterior of \( \Omega \) has at least two connected components (see also \cite[Sec.~6.12]{DaLaMu21}). The question of whether 0 is an eigenvalue is an intriguing one, but it falls outside the scope of this paper. Readers may find further insight in the work of Ebenfelt, Khavinson, and Shapiro \cite{EbEtAl01}. The variational interpretation of the NP-eigenvalue problem originates from Poincar\'e's work \cite{Po97}, and for a modern treatment of the subject, we refer to Khavinson, Putinar, and Shapiro \cite{KhPuSh07}.

A certain regularity of the boundary of $\Omega$ is necessary for the compactness of \( K_{\partial \Omega} \) and \( K^*_{\partial \Omega} \). The presence of corners, as noted by Carleman in his doctoral dissertation \cite{Ca16}, can result in a continuous spectrum. For a recent review on this topic, the reader may refer to Ando et al.~\cite{AnKaMiPu21}. Here, we mention the works of Perfekt and Putinar \cite{PePu17} on the essential NP spectrum in a wedge, and that of
Kang et al.~\cite{KaLiYu17} on lens domains. Assumption \eqref{Omega_def} ensures that this situation is avoided, and the structure of the spectrum is as described above.

Since \( K_{\partial \Omega} \) and \( K^*_{\partial \Omega} \) share the same spectrum, we may use either operator in our analysis of the eigenvalues. However, $K^*_{\partial \Omega}$ offers certain advantages. While neither \( K_{\partial \Omega} \) nor \( K^*_{\partial \Omega} \) is generally self-adjoint--except in the special case where $\Omega$ is a ball and
\[
(x - y) \cdot \nu_{\Omega}(y) = (y - x) \cdot  \nu_{\Omega}(x) \quad \text{for all } x, y \in \partial \Omega
\]
(see Khavinson, Putinar, and Shapiro \cite[Thm. 8.1]{KhPuSh07})--the operator $K^*_{\partial \Omega}$ can always be symmetrized via the {\it Calder\'on identity}, also known as {\it Plemelj's symmetrization principle}:
\begin{equation}\label{PSP}
K_{\partial \Omega} S_{\partial \Omega} = S_{\partial \Omega} K^*_{\partial \Omega}\,,
\end{equation}
where $S_{\partial \Omega}$ is the single-layer potential operator (see Plemelj \cite{Pl11}). For the readers' convenience, we include a brief proof of this in the next section, where we also recall the definition of the operator $S_{\partial \Omega}$. Thus, we will proceed with $K^*_{\partial \Omega}$ in our analysis.


Our primary goal is to investigate how the eigenvalues \( \lambda_j \) depend on perturbations of the domain \( \Omega \). Specifically, we introduce a family \( \mathcal{A}^{1,\alpha}_{\partial \Omega} \) of \( C^{1,\alpha} \) functions \( \phi \) that are diffeomorphisms between \( \partial \Omega \) and the image \( \phi(\partial \Omega) \). We denote by \( \Omega[\phi] \) the bounded open set enclosed by \( \phi(\partial \Omega) \), so that \( \partial \Omega[\phi] = \phi(\partial \Omega) \). We then extend the definitions of  \( K^*_{\partial \Omega} \) to \( \Omega[\phi] \), writing  \( K^*_{\phi(\partial \Omega)} \) accordingly (noting that \( \Omega[\phi] \) satisfies the conditions in \eqref{Omega_def}). Denoting by \( \lambda[\phi] \) an  eigenvalue of  \( K^*_{\phi(\partial \Omega)} \), we ask ourselves what is the regularity of the map
\[
\mathcal{A}^{1,\alpha}_{\partial\Omega}\ni\phi\mapsto\lambda[\phi]\in\mathbb{R}\,,
\]
and we aim to prove an analyticity result.

Our argument builds on the analyticity result established by Lanza and Rossi \cite{LaRo04} (see also  \cite{DaLuMu22}) for the map that takes a diffeomorphism \( \phi \in \mathcal{A}^{1,\alpha}_{\partial \Omega} \) to the $\phi$-pullback of \( K^*_{\phi(\partial \Omega)} \), viewed as an element of an appropriate operator space. This analyticity can then be leveraged to prove an analyticity result for the corresponding Riesz projector, whose image coincides with the eigenspace of an eigenvalue \( \lambda[\phi] \) or the space generated by the eigenfunctions of set of eigenvalues $\{\lambda_1[\phi],\ldots,\lambda_m[\phi]\}$. From this, we obtain an analyticity result for a basis of such space, which in turn, we use to analyze the eigenvalues.

We note that this approach differs from the method of \cite{LaLa02, LaLa04} with Lanza, which was designed for self-adjoint operators and involves a detailed analysis of bilinear forms rather than Riesz projectors. Our strategy appears to be better suited for handling symmetrizable operators.

We can compare our results with those of Grieser \cite{Gr14}. In Grieser's paper, the focus is on smooth domains and one-dimensional normal perturbations of the form
\[
\phi_t(x) := x + t a(x) \nu_\Omega(x),
\]
where \( a \) is a smooth scalar function on \( \partial \Omega \), and the goal is to analyze the dependence of the eigenvalues on the real parameter \( t \). When restricted to perturbations of the form \( \phi_t \), our results recover those of Grieser, but with a lower regularity assumption.

However, extending to a multi-dimensional perturbation parameter (which in our case is infinite-dimensional) comes at an intrinsic cost. Specifically, under perturbations, an eigenvalue of multiplicity greater than one may split into several eigenvalues, whose total multiplicity equals the original. While for one-dimensional perturbations it is possible to isolate analytic branches, this is generally not feasible for even two-dimensional perturbations, let alone for the infinite-dimensional perturbations considered here (see Rellich's book \cite[Chap.~I, \S.2]{Re69} for a classical example).

A way to overcome this problem, without limiting ourselves to one-dimensional perturbations, is to consider the symmetric functions of the (potentially multiple) eigenvalues--an idea introduced, to the best of our knowledge, in \cite{LaLa04}. These symmetric functions are real functions defined by
\[
\Lambda_h^m(\xi_{1}, \ldots, \xi_{m}) := \sum_{\substack{j_1, \ldots, j_h \in \{1, \ldots, m\} \\ j_1 < \cdots < j_h}} \xi_{j_1} \cdots \xi_{j_h},
\]
for all \( \xi_1, \ldots, \xi_m \in \mathbb{R} \), where the integer $h$ runs from $1$ to $m$. If \( A \) is an \( (m \times m) \) real matrix with eigenvalues \( \xi_1, \ldots, \xi_m \in \mathbb{R} \), then the \( \Lambda_h^m(\xi_{1}, \ldots, \xi_{m}) \)'s are, up to a sign, the coefficients of the characteristic polynomial of \( A \). Specifically,
\begin{equation}\label{secular}
\mathrm{det}(A - \tau I_m) = \sum_{h=0}^m (-1)^h \tau^{h} \Lambda_{m-h}^m(\xi_{1}, \ldots, \xi_{m}),
\end{equation}
where we set \( \Lambda_0^m(\xi_{1}, \ldots, \xi_{m}) := 1 \). Notably, while the entries of the matrix \( A \) depend on the specific basis chosen for \( \mathbb{R}^m \), the symmetric functions \( \Lambda_h^m(\xi_{1}, \ldots, \xi_{m}) \) depend only on the eigenvalues \( \xi_1, \ldots, \xi_m \), making them independent of the choice of basis.

In our Theorem \ref{main}, we consider a diffeomorphism \( \phi_0 \in \mathcal{A}^{1,\alpha}_{\partial\Omega} \) and an eigenvalue \( \lambda \) of \( K^*_{\phi_0(\partial \Omega)} \) with multiplicity \( m \). We prove that, for \( \phi \) in a sufficiently small neighborhood \( \mathcal{U} \) of \( \phi_0 \), the eigenvalue \( \lambda \) splits into \( m \) eigenvalues
\[
\lambda_1[\phi] \leq \cdots \leq \lambda_m[\phi],
\]
(repeated according to their multiplicity), and the maps taking \( \phi \in \mathcal{U} \) to the symmetric functions
\[
\Lambda_h^m(\lambda_1[\phi], \ldots, \lambda_m[\phi])
\]
are real analytic. We also prove that the vector space generated by the eigenfunctions corresponding to \( \lambda_1[\phi], \ldots, \lambda_m[\phi] \) has a basis that depends analytically on \( \phi \in \mathcal{U} \). While this basis is not unique, the symmetric functions \( \Lambda_h^m(\lambda_1[\phi], \ldots, \lambda_m[\phi]) \) are intrinsic to the eigenvalues, making them a natural choice in this context. As a corollary, Theorem \ref{main} implies the continuity of the eigenvalues $\lambda_1[\phi], \ldots, \lambda_m[\phi]$ as $\phi$ tends to $\phi_0$. Moreover, if $\lambda$ happens to be a simple eigenvalue (so that $m=1$), then Theorem \ref{main} implies that we have a real analytic map $\phi\mapsto\lambda[\phi]$, where $\lambda[\phi]$ is an eigenvalue of $K^*_{\phi(\partial \Omega)}$ and $\lambda[\phi_0]=\lambda$.

We recall that the analyticity of the map \( \phi \mapsto \Lambda_h^m(\lambda_1[\phi], \ldots, \lambda_m[\phi]) \) (or $\phi\mapsto\lambda[\phi]$ if $\lambda$ is simple) means that it can be expanded into a convergent power series in \( \phi - \phi_0 \). In particular, it is differentiable. Our next objective is to derive an explicit formula for the differential
\[
d_\phi \Lambda^m_h(\lambda_1[\phi_0], \ldots, \lambda_m[\phi_0]).
\]
As a first step, we show that this computation reduces to finding the \( \phi \)-differential of the pulled-back integral operator \( K^*_{\phi(\partial\Omega)}[\mu_j[\phi_0]] \circ \phi \), where \( \mu_j[\phi_0] \) is an element of the eigenspace corresponding to \( \lambda \). After a (not-so-brief) manipulation of this differential, we arrive at Theorem \ref{dLambda}, where we provide an expression for the differential of \( \Lambda^m_h(\lambda_1[\phi_0], \ldots, \lambda_m[\phi_0]) \).

We note that a similar formula was obtained by Grieser \cite{Gr14} in the case of smooth normal perturbations, using a completely unrelated approach, which he also uses to compute the second derivative (see also Grieser et al.~\cite{GrEtAl09}). It is important to emphasize that our formula for the first-order derivative depends only on the normal component of the perturbation, as one might expect from the Zol\'esio velocity method for domain perturbation problems. However, the relatively low regularity assumption we employ prevents us from directly extending Grieser's formula to non-normal perturbations. We discuss these aspects in Section \ref{sec:comparison}.

In the final part of this paper, we investigate some consequences of the formula computed for the differential of \( \Lambda^m_h(\lambda_1[\phi], \ldots, \lambda_m[\phi]) \). First, we consider a dilation of the domain \(\Omega\), specifically a perturbation of the form
\[
\phi_t(x) := x+tx.
\] 
(It should be noted that this is not a normal perturbation.) Since NP-eigenvalues are invariant under dilation, we set the derivative formula obtained in Theorem \ref{dLambda} equal to zero and deduce a Rellich-Poho\v{z}aev-type formula for the eigenvalues.

In the second application, we examine the so-called $1/2$-conjecture, which states that if the dimension \(n = 3\), then for any domain \(\Omega\) obtained by perturbing a ball \(\mathbb{B}_3\), and for any \(k \in \mathbb{N}\), there are \(2k+1\) NP-eigenvalues (counting multiplicities) whose sum is \(1/2\). If true, this conjecture would imply that the ball minimizes the second NP-eigenvalue. In fact, the NP-eigenvalues of the 3-dimensional ball take the values 
\[
\frac{1}{2(2k+1)} \quad \text{for } k \in \mathbb{N},
\]
each with multiplicity \(2k+1\). The second eigenvalue is \({1}/{6}\), with multiplicity 3, and any other combination of three numbers whose sum is \(1/2\) must include a number greater than or equal to \({1}/{6}\)  (this argument is taken from Miyanishi and Suzuki \cite[proof of Theorem 4.3]{MiSu17}).

In the works of Martensen \cite{Ma99} and Ritter \cite{Ri95}, the $1/2$-conjecture is proven for domains \(\Omega\) that are spheroids or triaxial ellipsoids (see also Dobner and Ritter \cite{DoRi97}). Furthermore, Ando et al.~\cite{AnKaMiUs19} demonstrated that the ball is a critical shape for the sum of the eigenvalues splitting from \(\frac{1}{2(2k+1)}\). Specifically, they showed that if \(\lambda_1(t), \ldots, \lambda_{2k+1}(t)\) are the eigenvalues splitting from \(\frac{1}{2(2k+1)}\) when a normal perturbation \(\phi_t(x) := x + ta(x)\nu_\Omega(x)\) is applied to the ball, then the derivative of the function \(t \mapsto \lambda_1(t) + \ldots + \lambda_{2k+1}(t)\) is zero at \(t = 0\). While this result is consistent with the $1/2$-conjecture, it does not constitute a proof.

We observe that the sum \(\lambda_1(t) + \ldots + \lambda_{2k+1}(t)\) is the symmetric function \(\Lambda_1^{2k+1}\), computed at the eigenvalues \(\lambda_1(t), \ldots, \lambda_{2k+1}(t)\). Using the derivative formula derived in Theorem \ref{dLambda}, we can replicate the result of Ando and collaborators, extending it to cases where the perturbation is not necessarily normal and to any dimension \(n \geq 3\). This suggests that a variant of the $1/2$-conjecture could potentially be extended to any dimension \(n \geq 3\). By the same reasoning as in the three-dimensional case, this would imply that the ball minimizes the second NP-eigenvalue for all dimensions $n\ge 3$ (see Remark \ref{criticality} for details).

We conclude this introduction by noting that, in addition to the aforementioned papers, several works on the spectral properties of the NP-operator have emerged in recent years. To highlight a few, we reference Miyanishi and Rozenblum \cite{MiRo19} for Weyl's asymptotics. For the spectral analysis of the NP-operator in elasticity, we mention the works of Deng et al.~\cite{DeLiLi19}, Miyanishi and Rozenblum \cite{MiRo21}, Fukushima et al.~\cite{FuYoKa24}, and Rozenblum \cite{Ro23}. Additionally, a shape derivative approach has been employed by Ando et al.~\cite{AnKaMiPu23} to establish generic properties of the NP-spectrum. 

The paper is organized as follows. In Section \ref{sec:potential}, we introduce some key concepts from potential theory and discuss the PDE counterpart of the NP-eigenvalue problem, namely the plasmonic boundary value problem. At the end of this section, we prove the Calder\'on identity and explain why the operator \(K^*_{\partial \Omega}\) is symmetrizable. In Section \ref{sec:RP}, we introduce the Riesz projector for a general symmetrizable operator on a Hilbert space and examine some of its properties. Section \ref{sec:shape} returns to our specific perturbation problem. We first demonstrate that the Riesz projector corresponding to (a pulled-back version of) \(K^*_{\phi(\partial \Omega)}\) depends analytically on the shape diffeomorphism \(\phi\). Then, we deduce Theorem~\ref{main}, showing that the symmetric functions of the eigenvalues depend analytically on \(\phi\). From here, our goal is to compute the first shape derivative of the symmetric functions, which is done in Section \ref{derivatives}. In Section \ref{sec:comparison}, we compare our findings with Grieser's results and with Zol\'esio's speed method. Finally, in Section \ref{sec:applications}, we explore some applications, including a Rellich-Poho\v{z}aev-type formula and the criticality of the sphere for certain sums of eigenvalues. An appendix is included at the end of the paper, where we provide detailed proofs of certain equalities involving hypersingular integral operators related to the gradient of the double layer. Although these may be familiar to experts in potential theory, due to the lack of accessible references, we present a thorough argument.

\section{Some notions of potential theory}\label{sec:potential}

In this section, we summarize some key concepts of potential theory. We refrain from presenting proofs here and instead direct the reader to \cite{Fo95} and \cite{DaLaMu21} for detailed explanations. 

We have previously introduced the boundary operators $K_{\partial\Omega}$ and $K^*_{\partial\Omega}$, which, as a reminder, are compact on $L^2(\partial\Omega)$ (because $\Omega$ is of class $C^{1,\alpha}$) and are adjoint to each other. We also recall that $K_{\partial\Omega}$ is compact on the space $C^{0,\alpha}(\partial\Omega)$ of H\"older continuous functions, and $K^*_{\partial\Omega}$ is compact on $C^{1,\alpha}(\partial\Omega)$, the space of differentiable functions with H\"older continuous derivatives, as was noted by Schauder in \cite{Sc31, Sc32}. 

\paragraph{The double-layer potential.}

For a density function $\psi \in L^2(\partial\Omega)$, the double-layer potential $D_{\partial\Omega}[\psi]$ is defined by
\[
D_{\partial\Omega}[\psi](x) := \int_{\partial\Omega}\nu_\Omega(y)\cdot \nabla E_n(x-y)\psi(y)\,d\sigma_y \quad \text{for $x \in \mathbb{R}^n\setminus\partial\Omega$,}
\]
and if $\psi\in C^{0,\alpha}(\partial\Omega)$, then the restriction of $D_{\partial\Omega}[\psi]$ to $\Omega$ extends to a function $D^+_{\partial\Omega}[\psi]$ in $C^{0,\alpha}(\overline{\Omega})$. Similarly, the restriction of $D_{\partial\Omega}[\psi]$ to $\mathbb{R}^n\setminus\overline{\Omega}$ extends to a function $D^-_{\partial\Omega}[\psi]$ in $C^{0,\alpha}(\mathbb{R}^n\setminus{\Omega})$ (notice that $\mathbb{R}^n\setminus{\Omega}$ is the closure of an open set). 

Both $D^+_{\partial\Omega}[\psi]$ and $D^-_{\partial\Omega}[\psi]$ are defined on the boundary $\partial\Omega$, where we have the jump formulas
\begin{equation}\label{jump}
 D^{\pm}_{\partial\Omega}[\psi] = \pm \frac{1}{2}\psi +K_{\partial\Omega}[\psi] \qquad \mbox{ on } \partial \Omega\,.
\end{equation}

Since the double-layer potential \(D_{\partial\Omega}[\psi]\) is harmonic in \(\mathbb{R}^n \setminus \partial\Omega\), the jump formulas \eqref{jump} allow us to reduce both the interior and exterior Dirichlet problems to second-kind boundary integral equations for the operator \(K_{\partial\Omega}\). For instance, \(u = D^+_{\partial\Omega}[\psi]\) solves the Dirichlet problem

\begin{equation}\label{eq:dirbvp}
\begin{cases}
\Delta u = 0 \quad &\text{in } \Omega, \\
u = g \quad &\text{on } \partial \Omega,
\end{cases}
\end{equation}
whenever
\begin{equation}\label{eq:inteqdir}
\frac{1}{2} \psi + K_{\partial\Omega}[\psi] = g \quad \text{on } \partial \Omega.
\end{equation}
If both \(\Omega\) and its exterior are connected, \(\frac{1}{2}I + K_{\partial\Omega}\) is invertible, making problem \eqref{eq:dirbvp} and equation \eqref{eq:inteqdir} equivalent (the connectedness assumption is necessary, cf.~\cite[Chap.~6]{DaLaMu21}).

One of the earliest appearances of this approach can be found in the seminal works of Neumann (see, e.g., \cite{Ne77}). To solve equation \eqref{eq:inteqdir} by inverting the operator \(\frac{1}{2}I + K_{\partial\Omega}\), Neumann used what is now called the {\em Neumann series}. However, he was successful only under the assumption that \(\Omega\) is convex. Later, Poincar\'e \cite{Po97} extended Neumann's approach to domains that are not necessarily convex. Further developments were made by Korn \cite{Ko99}, Steklov \cite{St00}, and Zaremba \cite{Za97}.

For more detailed discussions on this topic, see the works of Costabel \cite{Co07}, Khavinson, Putinar, and Shapiro \cite{KhPuSh07}, and Wendland \cite{We09}. 

If $\psi$ is of class $C^{1,\alpha}$, then both $D^+_{\partial\Omega}[\psi]$ and $D^-_{\partial\Omega}[\psi]$ are of class $C^{1,\alpha}$. Namely, $D^+_{\partial\Omega}[\psi]\in C^{1,\alpha}(\overline\Omega)$ and $D^-_{\partial\Omega}[\psi]\in C^{1,\alpha}(\mathbb{R}^n\setminus\Omega)$. In this case, the normal derivatives of $D^+_{\partial\Omega}[\psi]$ and $D^-_{\partial\Omega}[\psi]$ do not jump at the boundary. To wit, if $\nu_\Omega$ denotes the outward unit normal to $\partial\Omega$, then  we have
\[
\nu_\Omega\cdot\nabla D^+_{\partial\Omega}[\psi]=\nu_\Omega\cdot\nabla D^-_{\partial\Omega}[\psi]\qquad\text{on }\partial\Omega.
\] 
The tangential derivatives have instead a jump, as described by Proposition \ref{thetangentialjump} in the appendix (see also \eqref{nablaDjump}).

It will be convenient to have a symbol for the normal derivative of the double-layer potential. So we set
\begin{equation}\label{Mr.T}
T_{\partial\Omega}[\psi]:=\nu_\Omega\cdot\nabla D^+_{\partial\Omega}[\psi]\qquad\text{on }\partial\Omega
\end{equation}
(or, equivalently, $T_{\partial\Omega}[\psi]:=\nu_\Omega\cdot\nabla D^-_{\partial\Omega}[\psi]$). We see that $T_{\partial\Omega}$ is a bounded operator from $C^{1,\alpha}(\partial\Omega)$ to $C^{0,\alpha}(\partial\Omega)$.
  
\paragraph{The single-layer potential.} The single-layer potential $S_{\partial\Omega}[\psi]$ is defined by 
\[
S_{\partial\Omega}[\psi](x) :=\int_{\partial\Omega} E_n(x-y)\psi(y)\,d\sigma_y \quad \text{for $x \in \mathbb{R}^n$ and all $\psi\in L^2(\partial\Omega)$,}
\]
and restricts to a function $S^+_{\partial\Omega}[\psi]$ in $C^{1,\alpha}(\overline\Omega)$ and to a function $S^-_{\partial\Omega}[\psi]$ in $C^{1,\alpha}(\mathbb{R}^n\setminus\Omega)$ whenever $\psi$ belongs to $C^{0,\alpha}(\partial\Omega)$. The single-layer potential is continuous on the whole of $\mathbb{R}^n$, but its derivative is not. Particularly,  we have the following jump formula for the gradient of the single-layer potential:
\begin{equation}\label{nablasinglejump}
\nabla S^{\pm}_{\partial\Omega}[\psi](x)=\pm\frac{\nu_\Omega(x)}{2}\psi(x)+\int^*_{\partial\Omega}\psi(y)\nabla E_n(x-y)\,d\sigma_y\qquad\text{for all }x\in\partial\Omega,
\end{equation}
where $\int^*_{\partial\Omega}$ denotes the principal value integral (cf.~e.g.~\cite[Theorem 6.8]{Da13}, see also \eqref{intstar} in the appendix for the definition of $\int^*_{\partial\Omega}$). Then, for the normal derivative we have
\begin{equation}\label{nusinglejump}
\nu_\Omega\cdot\nabla S^{\pm}_{\partial\Omega}[\psi]=\pm\frac{1}{2}\psi-K^*_{\partial\Omega}[\psi]\qquad\text{on }\partial\Omega,
\end{equation}
while the tangential derivative of $S_{\partial\Omega}[\psi]$ displays no jump at the boundary. To wit,
\[
\nabla_{\partial\Omega} S^+_{\partial\Omega}[\psi]=\nabla_{\partial\Omega} S^-_{\partial\Omega}[\psi]\qquad\text{on }\partial\Omega
\]
(cf.~\eqref{tangrad} for the definition of $\nabla_{\partial\Omega}$). Just as the double-layer potential is used to reduce the Dirichlet problem to a second-kind integral equation, the jump formulas \eqref{nusinglejump} make the single-layer potential the principal tool for addressing the Neumann boundary value problem. 

We emphasize that our single-layer potential has the opposite sign compared to that in \cite{Fo95} and \cite{DaLaMu21}. This is intentional, as we prefer the single-layer operator to be positive with respect to the inner product on \( L^2(\partial\Omega) \).

\paragraph{The plasmonic problem.} For \(n \geq 3\), the map that takes \(\psi\) to \(S_{\partial\Omega}[\psi]_{|\partial\Omega}\) is an isomorphism between \(C^{0,\alpha}(\partial\Omega)\) and \(C^{1,\alpha}(\partial\Omega)\). Using this, we can prove that any harmonic function \(u\) in \(C^{1,\alpha}(\overline\Omega)\) can be uniquely written as a single-layer potential \(S^+_{\partial\Omega}[\psi]\), with density \(\psi\) in \(C^{0,\alpha}(\partial\Omega)\). Similarly, any harmonic function \(v\) in \(\mathbb{R}^n \setminus \overline\Omega\), locally of class \(C^{1,\alpha}\) on \(\overline{\mathbb{R}^n \setminus \overline\Omega} = \mathbb{R}^n \setminus \Omega\), and harmonic at infinity (i.e., such that \(v(x) = O(|x|^{2-n})\) as \(|x| \to \infty\)), can be uniquely represented as a single-layer potential \(S^-_{\partial\Omega}[\psi]\), with density \(\psi\) in \(C^{0,\alpha}(\partial\Omega)\). 

In the two-dimensional case, the situation is slightly more complicated. Harmonic functions in both \(\Omega\) and \(\mathbb{R}^n \setminus \overline{\Omega}\), which are $C^{1,\alpha}$ in $\overline{\Omega}$ and locally in $\mathbb{R}^n \setminus \Omega$, and harmonic at infinity, can be represented as the sum of a single-layer potential \(S^\pm_{\partial\Omega}[\psi]\), with density \(\psi\) in \(C^{0,\alpha}(\partial\Omega)\) and satisfying \(\int_{\partial\Omega} \psi \, d\sigma = 0\), and a constant function. For further details, see \cite[Chap.~6.10]{DaLaMu21}.

In any case, both for \(n \geq 3\) and \(n = 2\), we observe that, provided \(\lambda \neq 1/2\), the NP-eigenvalue problem
\begin{equation}\label{eigen*}
K^*_{\partial\Omega}[\psi] = \lambda \psi
\end{equation}
is equivalent to the {\em plasmonic problem}:
\begin{equation}\label{plasmonic}
\begin{cases}
\Delta u = 0 &\text{in } \Omega, \\
\Delta v = 0 &\text{in } \mathbb{R}^n \setminus \overline{\Omega}, \\
v(x) = O(|x|^{2-n}) &\text{as } |x| \to \infty, \\
u - v = 0 &\text{on } \partial\Omega, \\
\epsilon \, \nu_\Omega \cdot \nabla u + \nu_\Omega \cdot \nabla v = 0 &\text{on } \partial\Omega,
\end{cases}
\end{equation}
with
\[
\epsilon = \frac{\frac{1}{2} + \lambda}{\frac{1}{2} - \lambda}\,.
\]

For \(n \geq 3\), the equivalence between \eqref{eigen*} and \eqref{plasmonic} can be established by taking \(u = S^+_{\partial\Omega}[\psi]\) and \(v = S^-_{\partial\Omega}[\psi]\), and using the jump formulas \eqref{nusinglejump}. In the case of \(n = 2\), we take \(u = S^+_{\partial\Omega}[\psi] + \rho\) and \(v = S^-_{\partial\Omega}[\psi] + \rho\), where \(\int_{\partial\Omega} \psi \, d\sigma = 0\) and \(\rho \in \mathbb{R}\). It should be noted that \eqref{eigen*} implies \(\psi\) has zero integral on \(\partial\Omega\) when \(\lambda \neq 1/2\) (cf.~\cite[Lemma 6.11]{DaLaMu21}).

The values of \(\varepsilon\) for which problem \eqref{eigen*} has solutions are referred to as {\em plasmonic eigenvalues}, and the corresponding solutions are called {\em plasmonic eigenfunctions}. Readers interested in this topic will find a substantial body of literature in physics on plasmonic waves.  For example, some of the effects of plasmonic resonance are described in the introduction to the paper of Bonnetier, Dapogny, and Triki \cite{BoDaTr19}, which then discusses the NP spectrum in a domain with small vanishing holes. For an interpretation related to perturbation problems of the kind considered in our paper, we refer to Grieser et al.~\cite{GrEtAl09}. We also mention the connection between the literature on the NP spectrum (and plasmonic resonance) and the extensive body of work devoted to cloaking by anomalous localized resonance (see, e.g., Ammari et al.~\cite{AmCiKaLeMi13}) as well as to the problem of estimating the energy between two close-to-touching inclusions (see Ammari et al.~\cite{AmCiKaLeYu13} for the case of anti-plane elasticity).

Occasionally, it will be more aesthetically pleasing to express results in terms of plasmonic eigenfunctions rather than NP-eigenfunctions. This is the case, for example, with the derivative formula in Theorem \ref{dLambda} and the Rellich-Poho\v{z}aev formula \eqref{RPL.eq3}.

\paragraph{The Calder\'on identity.} As mentioned in the introduction, we may work with either \(K_{\partial\Omega}\) or \(K^*_{\partial\Omega}\), but \(K^*_{\partial\Omega}\) offers certain advantages due to the Calder\'on identity, a proof of which can be found in Lanza \cite{La24} and Mitrea and Taylor \cite[(7.41)]{MiTa99}. For the sake of completeness, we present a concise argument.

Suppose $u\in C^{1,\alpha}(\overline\Omega)$ is harmonic in $\Omega$. Utilizing the third Green's identity (cf.~\cite[Theorem 6.9]{DaLaMu21}), we have
\[
 D_{\partial\Omega}\left[u_{|\partial\Omega}\right](x)+ S_{\partial\Omega}\left[\nu_\Omega\cdot\nabla u_{|\partial\Omega}\right](x)=0\quad\text{for all $x\in \mathbb{R}^n\setminus\overline\Omega$.}
\]
Thus, for $u= S^+_{\partial\Omega}[\mu]$ with $\mu\in C^{0,\alpha}(\partial\Omega)$, we obtain
\[
 D_{\partial\Omega}\left[ S_{\partial\Omega}[\mu]_{|\partial\Omega}\right](x)+ S_{\partial\Omega}\left[\nu_\Omega\cdot\nabla S^+_{\partial\Omega}[\mu]_{|\partial\Omega}\right](x)=0\quad\text{for all $x\in \mathbb{R}^n\setminus\overline\Omega$.}
\]
By letting $x\in \mathbb{R}^n\setminus\overline\Omega$ approach a point $x'\in\partial\Omega$ and employing the jump formulas for the double-layer potential \eqref{jump} and the normal derivative of the single layer potential \eqref{nusinglejump}, we derive
\[
-\frac{1}{2} S_{\partial\Omega}[\mu](x')+K_{\partial\Omega}\left[ S_{\partial\Omega}[\mu]_{|\partial\Omega}\right](x')+ S_{\partial\Omega}\left[\frac{1}{2}\mu-K^*_{\partial\Omega}[\mu]\right](x')=0\quad\text{for all $x'\in \partial\Omega$.}
\]
Thus,
\[
K_{\partial\Omega}\left[ S_{\partial\Omega}[\mu]_{|\partial\Omega}\right](x')- S_{\partial\Omega}\left[K^*_{\partial\Omega}[\mu]\right](x')=0\quad\text{for all $x'\in \partial\Omega$.}
\]
Consequently, $K_{\partial\Omega} S_{\partial\Omega}= S_{\partial\Omega}K^*_{\partial\Omega}$ on $C^{0,\alpha}(\partial\Omega)$, and this equality extends to $L^2(\partial\Omega)$ by a density-plus-continuity argument. 

A consequence of the Calder\'on identity is that we can symmetrize $K^*_{\partial\Omega}$. First, we define a new inner product on $L^2(\partial\Omega)$ as follows: 
\begin{equation*}
\langle f, g \rangle_{ S_{\partial\Omega}} := \langle  S_{\partial\Omega}[f], g \rangle_{L^2(\partial\Omega)} \qquad \text{for all $f,g \in L^2(\partial\Omega)$.}
\end{equation*}
We observe that, with respect to this inner product, $K^*_{\partial\Omega}$ is self-adjoint by \eqref{PSP}. We then define the energy space $\mathcal{E}$ as the completion of $L^2(\partial\Omega)$ 
under the norm 
\[
\|f \|^2_{ S_{\partial\Omega}} := \langle  S_{\partial\Omega}[f], f \rangle_{L^2(\partial\Omega)}.
\]
So, for a sufficiently regular function $f$, and assuming that $\int_{\partial\Omega} f\,d\sigma=0$ if $n=2$, we have
\[
\|f \|^2_{ S_{\partial\Omega}}=\int_{\Omega}\left|\nabla S^+_{\partial\Omega}[f]\right|^2\,dx+\int_{\mathbb{R}^n\setminus\overline{\Omega}}\left|\nabla S^-_{\partial\Omega}[f]\right|^2\,dx\,.
\]

It can be shown that  $\mathcal{E}=H^{-1/2}(\partial\Omega)$, with equivalent norms. Morever, $K^*_{\partial\Omega}$ is a compact, self-adjoint operator on $\mathcal{E}$ and the spectra of $K^*_{\partial\Omega}$ on $\mathcal{E}$ and on $L^2(\partial\Omega)$ are identical. For a proof of these facts, we refer to \cite{KhPuSh07}. For the general theory of symmetrizable operators, we refer to Krein \cite{Kr47}, and also Dieudonn\'e \cite{Di61} and Lax \cite{La54}.

\bigskip

Other notions of potential theory will be recalled as needed in the paper.

\section{Riesz projectors and symmetrizable operators}\label{sec:RP}

In this section, we present some results concerning the Riesz projector for a symmetrizable operator acting on Hilbert spaces.

Let $\left(H,\langle\cdot,\cdot\rangle\right)$ be a  (complex) Hilbert space, and let $B:H \to H$ be a linear bounded operator.  We denote the resolvent set of $B$ by $\rho(B)$ and its spectrum by $\sigma(B):= \mathbb{C}\setminus \rho(B)$. For all $\xi \in \rho(B)$, $R_B(\xi)$ represents the resolvent of $B$, defined as the bounded linear operator from $H$ to itself given by
\[
R_B(\xi) := (B-\xi)^{-1}.
\]
Here, the superscript $\cdot^{-1}$ indicates that we are taking the inverse of an invertible linear operator acting on Hilbert spaces.

Let $\gamma:[0,1]\to\mathbb{C}$ be a simple closed path with positive orientation. We define $\Gamma := \gamma([0,1])$, meaning that $\Gamma$ is the image of the path $\gamma$. 
Then $\Gamma$ divides $\mathbb{C}$ into two open connected components, one bounded and one unbounded. We denote the bounded component of $\mathbb{C}\setminus\Gamma$ by $\Gamma^i$. 
Suppose $\lambda$ is an isolated eigenvalue of $B$.  If 
\begin{equation}\label{gamma}
\Gamma \cap \sigma(B) =\emptyset\quad\text{and} \quad \Gamma^i \cap \sigma(B) = \{\lambda\},
\end{equation}
we say that the path $\gamma$  surrounds (or is around) $\lambda$.
The Riesz projector for the path $\gamma$ around $\lambda$ is the bounded linear operator defined by 
\[
P_{\gamma}[B] := -\frac{1}{2\pi i}\int_{\Gamma} R_B(\xi)\,d\xi \quad \text{in } H.
\]
The operator $P_\gamma[B]$ is a projection, and if $M' := P_\gamma[B] H$, then $\sigma(B_{|M'})= \{\lambda\}$ (see Kato \cite[Thm 6.17, p. 178]{Ka95}).
Moreover, the integral defining the Riesz projector does not depend on the specific choice of the path $\gamma$ around $\lambda$ (see e.g.  Hislop and Sigal \cite[Lemma 6.1, p.~61]{HiSi96}).

It is well known that the kernel of $B-\lambda$ is contained in the image of $P_\gamma[B]$, that is 
\begin{equation}\label{ker-P}
\ker (B-\lambda) \subseteq P_\gamma[B] H.
\end{equation}
To see why, let    $u \in \ker (B-\lambda)$. That is, 
$(B-\lambda)u=0$. Then, for all $\xi \in \rho(B)$, we have $(B-\xi)u=(\lambda -\xi)u$. We deduce that 
 \[
R_B(\xi)u=\frac{u}{\lambda-\xi}.
 \]
 Integrating this relation over $\Gamma$, we obtain
 \begin{equation}\label{Puu}
 P_\gamma[B] u = -\frac{1}{2\pi i}\int_{\Gamma} R_B(\xi)u\,d\xi =  -\frac{1}{2\pi i}\int_{\Gamma}\frac{u}{\lambda-\xi}\,d\xi = u,
 \end{equation}
and we conclude that $u \in P_\gamma[B] H$.  (This straightforward argument has been adapted from \cite[Prop.~6.3 (ii), p.~62]{HiSi96}.)

In general, the inclusion in \eqref{ker-P} is strict. However, if the operator $B$ is self-adjoint, then we have equality
\begin{equation}\label{ker-P2}
\ker (B-\lambda) = P_\gamma[B] H.
\end{equation}
For a proof of this fact, we refer to \cite[Prop.~6.3 (iii), p.~62]{HiSi96}.
Following an idea of Khavinson et al.~\cite{KhPuSh07}, we will now demonstrate that this equality holds also for operators that are not necessarily self-adjoint, but at least symmetrizable.

We say that $B$ is symmetrizable if there exists a  strictly positive self-adjoint operator $S \in \mathcal{L}(H)$ such that 
\begin{equation}\label{sym}
SB=B^*S.
\end{equation}
We recall that strictly positive means that
\[
\langle Su, u \rangle > 0 \qquad \text{for all $u \in H\setminus\{0\}$.}
\]
Then
\[
\langle u,v\rangle_S :=\langle Su, v \rangle  
\]
defines a new inner product in $H$, and $B$ becomes self-adjoint in $\left(H,\langle\cdot,\cdot\rangle_S\right)$ by \eqref{sym}. However, we cannot yet affirm the validity of \eqref{ker-P2}. The issue lies in the fact that, with this newly defined inner product, $H$ might not be complete, and therefore, may not be a Hilbert space. This is precisely the case with the NP-operator $K^\ast_{\partial\Omega}$.

Anyway, it is still possible to prove   \eqref{ker-P2} for a compact symmetrizable operator.
\begin{lemma}\label{lem:kerP}
Let $\left(H,\langle\cdot,\cdot\rangle\right)$ be a Hilbert space and $B$ a compact symmetrizable operator in $H$. Let $\lambda\neq 0$ be an (isolated) eigenvalue of $B$. 
Let $\gamma$ be a simple closed path around $\lambda$  with positive orientation (cf.~\eqref{gamma}).  Then
\[
\ker (B-\lambda) = P_\gamma[B] H.
\]
\begin{proof}
Since $\ker (B-\lambda)\subseteq P_\gamma[B] H $ by \eqref{ker-P}, it suffices to prove that $ P_\gamma[B] H \subseteq\ker (B-\lambda)$. We first observe that $P_\gamma[B]$ is a compact operator. Indeed, 
\[
R_B(\xi) +\xi^{-1} =\xi^{-1}BR_B(\xi) \qquad \text{for all }\xi \in \rho(B)
\]
and since $B$ is compact, it follows that $R_B(\xi) +\xi^{-1}$ is compact for all $\xi \in \rho(B)$. Moreover, $\int_\Gamma \xi^{-1}\,d\xi=0$ whenever $0\notin\Gamma^i$, which can be assumed (see \cite[p.~186]{Ka95}). Then 
\[
P_{\gamma}[B] = -\frac{1}{2\pi i}\int_{\Gamma} R_B(\xi)\,d\xi=-\frac{1}{2\pi i}\int_{\Gamma} R_B(\xi)+\xi^{-1}\,d\xi
\]  is compact as the integral of compact operators. By a standard result, a projection is compact if and only if its range is finite-dimensional (see e.g., \cite[Pb.~4.5, p.~157]{Ka95}). Thus, $M':=P_\gamma[B] H$ has finite dimension.

Now, let $m \in \mathbb{N}$, $m\geq1$, denote the dimension of $M'$. Since $B_{|M'}$ is self-adjoint on the finite-dimensional space $\left(M',\langle\cdot,\cdot\rangle_{S}\right)$,
there exists a basis $\{v_1,\ldots,v_m\}$ for $M'$ such that $Bv_j = \lambda_jv_j$ for some $\lambda_j \in \mathbb{R}$ for all $j=1,\ldots,m$.
This implies that 
\[
\{\lambda_j\}_{j=1,\ldots,m} \subseteq \sigma(B_{|M'}) = \{\lambda\}.
\]
Therefore $\lambda_j=\lambda$ for all $j=1,\ldots,m$  and $M'= P_\gamma[B] H \subseteq\ker (B-\lambda)$.
\end{proof}

\begin{remark}\label{rem:kerP}
All the discussion in this section can be straightforwardly generalized to the case of several eigenvalues. More precisely, if in Lemma \ref{lem:kerP} we replace the assumption that the path $\gamma$ surrounds $\lambda$ and instead we assume that $\gamma$ surrounds a finite set $\{\lambda_1,\ldots,\lambda_m\}$ of eigenvalues, than we have
\[
\bigoplus_{j=1}^m\ker\left(B-\lambda_j\right)=P_\gamma[B] H.
\]
\end{remark}
\end{lemma}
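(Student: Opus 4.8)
The plan is to reduce the equality $\ker(B-\lambda) = P_\gamma[B]H$ to the classical self-adjoint case by showing that the range $M' := P_\gamma[B]H$ is finite-dimensional, so that the completeness issue with the inner product $\langle\cdot,\cdot\rangle_S$ disappears. First I would recall, as in \eqref{ker-P}, that the inclusion $\ker(B-\lambda)\subseteq M'$ holds for free from the integral computation \eqref{Puu}, so only the reverse inclusion $M'\subseteq\ker(B-\lambda)$ requires work.

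The key step is to prove that $P_\gamma[B]$ is a compact operator. The idea is to exploit the compactness of $B$ via the resolvent identity $R_B(\xi) + \xi^{-1} = \xi^{-1}BR_B(\xi)$, valid for $\xi\in\rho(B)$: since $B$ is compact and $R_B(\xi)$ is bounded, each $R_B(\xi)+\xi^{-1}$ is compact. Choosing the path $\gamma$ so that $0\notin\Gamma^i$ (which can be done because $\lambda\neq 0$ is isolated), we have $\int_\Gamma \xi^{-1}\,d\xi = 0$, and therefore $P_\gamma[B] = -\frac{1}{2\pi i}\int_\Gamma (R_B(\xi)+\xi^{-1})\,d\xi$ is a (norm-convergent) integral of compact operators, hence compact. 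A projection with compact range has finite-dimensional range, so $\dim M' =: m < \infty$.

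Once $M'$ is finite-dimensional, the restriction $B_{|M'}$ is a self-adjoint operator on the \emph{finite-dimensional} — hence automatically complete — inner product space $(M', \langle\cdot,\cdot\rangle_S)$; here one needs to check that $M'$ is invariant under $B$ (standard for Riesz projectors) and that $S$ restricted to $M'$ is still strictly positive, which is immediate. By the spectral theorem in finite dimensions there is a basis $\{v_1,\ldots,v_m\}$ of $M'$ with $Bv_j = \lambda_j v_j$, $\lambda_j\in\mathbb{R}$. But $\sigma(B_{|M'}) = \{\lambda\}$ by the general Riesz-projector theory (Kato), so $\lambda_j = \lambda$ for every $j$, giving $M'\subseteq\ker(B-\lambda)$ and hence equality.

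The main obstacle is the compactness of $P_\gamma[B]$: one must justify interchanging the operator-norm limit defining the contour integral with the compactness conclusion (compactness is closed under norm limits, so this is fine), and one must be careful that the path can indeed be chosen to avoid $0$ — this is where the hypothesis $\lambda\neq 0$ enters, since $0$ is the only possible accumulation point of the spectrum of the compact operator $B$. For Remark \ref{rem:kerP}, the same argument applies verbatim with $\Gamma^i\cap\sigma(B) = \{\lambda_1,\ldots,\lambda_m\}$; then $\sigma(B_{|M'}) = \{\lambda_1,\ldots,\lambda_m\}$ and diagonalizing $B_{|M'}$ in the $\langle\cdot,\cdot\rangle_S$-inner product decomposes $M'$ as the orthogonal (hence direct) sum of the eigenspaces $\ker(B-\lambda_j)$.
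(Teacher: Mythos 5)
Your proposal is correct and follows essentially the same route as the paper's proof: compactness of $P_\gamma[B]$ via the resolvent identity $R_B(\xi)+\xi^{-1}=\xi^{-1}BR_B(\xi)$ and the vanishing of $\int_\Gamma\xi^{-1}\,d\xi$, hence finite-dimensionality of the range, followed by diagonalization of the self-adjoint restriction $B_{|M'}$ in the $\langle\cdot,\cdot\rangle_S$ inner product and the identification $\sigma(B_{|M'})=\{\lambda\}$. The extra checks you flag (invariance of $M'$ under $B$, strict positivity of $S$ on $M'$, closedness of compactness under norm limits) are sensible but do not change the argument.
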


\section{Shape analyticity of the symmetric functions of the NP-eigenvalues}\label{sec:shape}

We show that the symmetric functions of the NP-eigenvalues depend analytically upon variations of the supporting domain. To facilitate our analysis, we set ourselves in the framework of Schauder spaces, where we can use the real analyticity results established  by Lanza and collaborators (see \cite{DaLuMu22}, as well as the monograph \cite{DaLaMu21}).

 In the following Lemma \ref{lem:reg} we show that under the assumption that the domain is of class $C^{1,\alpha}$,  the eigenfunctions of $K^*_{\partial\Omega}$ are $\alpha$-H\"older continuous. 

\begin{lemma}\label{lem:reg}
Let $\alpha \in (0,1)$. Let $\Omega \subseteq \mathbb{R}^n$ satisfy assumption \eqref{Omega_def}. 
If for $\lambda \in \mathbb{R}$, $\lambda \neq 0$, and $\psi \in L^2(\partial\Omega)$ we have
\begin{equation}\label{lem:reg.eq1}
K^*_{\partial\Omega}[\psi] =\lambda\psi,
\end{equation}
then $\psi \in C^{0,\alpha}(\partial\Omega)$.
\end{lemma}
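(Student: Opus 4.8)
The plan is a regularity bootstrap exploiting the weak singularity of the integral kernel of $K^*_{\partial\Omega}$ on a $C^{1,\alpha}$ boundary. Since $\lambda\neq 0$, equation \eqref{lem:reg.eq1} gives $\psi=\lambda^{-1}K^*_{\partial\Omega}[\psi]$, and iterating, $\psi=\lambda^{-k}(K^*_{\partial\Omega})^{k}[\psi]$ for every $k\in\mathbb N$. Hence it suffices to prove that, for $k$ large enough, the operator $(K^*_{\partial\Omega})^{k}$ maps $L^{2}(\partial\Omega)$ (indeed $L^{1}(\partial\Omega)$) continuously into $C^{0,\alpha}(\partial\Omega)$; applying this to $\psi\in L^{2}(\partial\Omega)$ then yields $\psi\in C^{0,\alpha}(\partial\Omega)$.

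The geometric input is that, because $\partial\Omega$ is of class $C^{1,\alpha}$, one has the uniform estimate $|\nu_{\Omega}(x)\cdot(x-y)|\le C\,|x-y|^{1+\alpha}$ for $x,y\in\partial\Omega$ (locally $\partial\Omega$ is a graph of a $C^{1,\alpha}$ function, and this is the first-order Taylor remainder bound for that graph over its tangent plane at $x$). Since $\nabla E_{n}(x-y)$ equals a dimensional constant times $(x-y)|x-y|^{-n}$, the kernel of $K^*_{\partial\Omega}$, namely $k^{*}(x,y):=-\nu_{\Omega}(x)\cdot\nabla E_{n}(x-y)$, satisfies $|k^{*}(x,y)|\le C\,|x-y|^{\alpha-(n-1)}$: it is a \emph{weakly singular} kernel of order $n-1-\alpha$ on the $(n-1)$-dimensional manifold $\partial\Omega$. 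Moreover, for fixed $y$, the map $x\mapsto k^{*}(x,y)$ is $\alpha$-Hölder continuous away from the diagonal, the bottleneck exponent being the $C^{0,\alpha}$ regularity of $\nu_{\Omega}$ and of the chart maps of $\partial\Omega$.

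Now I invoke the classical composition estimates for weakly singular kernels (see e.g.\ \cite{Fo95} and \cite{DaLaMu21}): the kernel of $(K^*_{\partial\Omega})^{j}$ is the $j$-fold iterated kernel of $k^{*}$, and its order of singularity drops by $\alpha$ at each composition, so that once $j\alpha>n-1$ the kernel of $(K^*_{\partial\Omega})^{j}$ is bounded, and after finitely many further compositions it becomes continuous on $\partial\Omega\times\partial\Omega$ and $\alpha$-Hölder continuous in its first variable, uniformly in the second. Fix such a $k$, and write $h_{k}$ for the kernel of $(K^*_{\partial\Omega})^{k}$. Then the representation $\psi=\lambda^{-k}(K^*_{\partial\Omega})^{k}[\psi]$ gives both the boundedness of $\psi$ and, for all $x_{1},x_{2}\in\partial\Omega$,
\[
|\psi(x_{1})-\psi(x_{2})| = |\lambda|^{-k}\left| \int_{\partial\Omega}\bigl(h_{k}(x_{1},y)-h_{k}(x_{2},y)\bigr)\psi(y)\,d\sigma_{y}\right| \le C\,|\lambda|^{-k}\,\|\psi\|_{L^{1}(\partial\Omega)}\,|x_{1}-x_{2}|^{\alpha},
\]
whence $\psi\in C^{0,\alpha}(\partial\Omega)$, as desired.

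The only non-routine point — the \emph{main obstacle} — is the bookkeeping in the third paragraph: verifying that the iterated kernels really do gain $\alpha$ in their order of singularity at each step and ultimately acquire a uniform $C^{0,\alpha}$ modulus in the first slot (the delicate part is the behaviour near the diagonal, where the base kernel $k^{*}$ is only Hölder off-diagonal). This is classical potential theory but tedious to reproduce, so I would rely on the estimates in \cite{DaLaMu21} (or \cite{Fo95}) rather than rewrite them. As an alternative route, one could instead realize $\psi$ as the jump in the normal derivative of the single-layer potential via \eqref{nusinglejump} and invoke elliptic regularity up to the boundary for the transmission problem \eqref{plasmonic}: it is nondegenerate precisely because $\lambda\neq0$ forces $\epsilon\neq1$, and here $\lambda\in(-1/2,1/2)$ — recall that $\lambda=-1/2$ is ruled out by \eqref{Omega_def}, while the case $\lambda=1/2$ (for which $S^{+}_{\partial\Omega}[\psi]$ has zero Neumann trace, hence is constant) is handled directly — so that $S^{\pm}_{\partial\Omega}[\psi]$ is of class $C^{1,\alpha}$ up to $\partial\Omega$ and therefore $\psi\in C^{0,\alpha}(\partial\Omega)$.
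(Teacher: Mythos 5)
Your proof is correct and follows essentially the same route as the paper: iterate $\psi=\lambda^{-k}(K^*_{\partial\Omega})^k[\psi]$ and use the smoothing of composed weakly singular kernels on a $C^{1,\alpha}$ boundary to upgrade $\psi$ from $L^2$ to $C^{0,\alpha}$. The only point worth flagging is that a single application of $K^*_{\partial\Omega}$ to a merely continuous function lands only in $C^{0,\beta}$ for $\beta<\alpha$, so the claim that the iterated kernel is eventually $\alpha$-H\"older in its first variable uniformly in the second requires the two-step gain $C^0\to C^{0,\beta}\to C^{0,\alpha}$ (which is exactly what the paper's cited mapping properties of $K^*_{\partial\Omega}$ provide, so your deferral to the references is legitimate).
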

\begin{proof} Classical results in potential theory, derived from computations of the composition kernels, show that the integral operator $\left(K^*_{\partial\Omega}\right)^m$ has a singularity of order $O\left(\frac{1}{|x-y|^{n-1-m\alpha}} \right)$ for $x,y \in \partial\Omega$, $x \neq y$ (see e.g., Miranda \cite[Ch. 2 \S 11]{Mi70}). Thus, for a sufficiently large $m$, specifically for $m>(n-1)/\alpha$, the kernel of $\left(K^*_{\partial\Omega}\right)^m$ is continuous, and consequently, $\left(K^*_{\partial\Omega}\right)^m[\psi]$ is continuous as well. 

Applying $K^*_{\partial\Omega}$ to both sides of equation \eqref{lem:reg.eq1} for $(m-1)$ iterations, we obtain
\[
\left(K^*_{\partial\Omega}\right)^m[\psi] = \lambda^m \psi.
\]
Since $\lambda\neq 0$, we deduce that $\psi$ is also continuous. 

Recalling that, for $\beta\in (0,\alpha)$, $K^*_{\partial\Omega}$ maps $C^0(\partial\Omega)$ to $C^{0,\beta}(\partial\Omega)$, and $C^{0,\beta}(\partial\Omega)$ to $C^{0,\alpha}(\partial\Omega)$ (see e.g., Dondi and Lanza \cite[Thm. 10.1]{DoLa17}, as well as Schauder \cite{Sc31, Sc32} and Miranda \cite{Mir65, Mi70}), equality \eqref{lem:reg.eq1} and a standard bootstrapping argument imply that $\psi$ belongs to $C^{0,\alpha}(\partial\Omega)$. This concludes the proof. \end{proof}

We now introduce a class of domain perturbations. Let $\Omega$ satisfy assumption \eqref{Omega_def}. The set $\Omega$ will play the role of a fixed reference domain,  which we are going to perturb with a diffeomorphism. We define a specific set $\mathcal{A}^{1,\alpha}_{\partial \Omega}$ of $C^{1,\alpha}$-diffeomorphisms: $\mathcal{A}^{1,\alpha}_{\partial \Omega}$ consists of functions in $C^{1,\alpha}(\partial\Omega, \mathbb{R}^{n})$ that are injective and have injective differentials at all points of $\partial\Omega$ (see \cite[\S2.20]{DaLaMu21} for the definition of Schauder spaces on the boundary of a domain). By results from Lanza and Rossi \cite[Lemma 2.2, p.~197]{LaRo08} and \cite[Lemma 2.5, p.~143]{LaRo04}, we know that the set $\mathcal{A}^{1,\alpha}_{\partial \Omega}$ is open in $C^{1,\alpha}(\partial\Omega, \mathbb{R}^{n})$. Moreover, if $\phi \in \mathcal{A}^{1,\alpha}_{\partial \Omega}$, then the Jordan-Leray separation theorem ensures that $\phi(\partial\Omega)$ splits $\mathbb{R}^n$ into exactly two open connected components, one bounded and one unbounded (see e.g., Deimling \cite[Theorem 5.2, p. 26]{De85}). We denote the bounded connected component of $\mathbb{R}^n\setminus \phi(\partial \Omega)$ by $\Omega[\phi]$, so that $\partial\Omega[\phi]=\phi(\partial\Omega)$. We see that $\Omega[\phi]$ is a bounded subset of $\mathbb{R}^n$ of class $C^{1,\alpha}$, is connected, and has a connected exterior $\mathbb{R}^n\setminus\overline{\Omega[\phi]}$. Namely, it satisfies assumptions \eqref{Omega_def}. 

Then, for a diffeomorphism $\phi \in \mathcal{A}^{1,\alpha}_{\partial \Omega}$, we can  write $K_{\phi(\partial\Omega)}$, $K^*_{\phi(\partial\Omega)}$, $D_{\phi(\partial\Omega)}$, $D^\pm_{\phi(\partial\Omega)}$, $S_{\phi(\partial\Omega)}$, and $S^\pm_{\phi(\partial\Omega)}$, extending the definitions of Sections \ref{sec:intro} and \ref{sec:potential} from the case of the domain $\Omega$ to $\Omega[\phi]$.  We consider the eigenvalue problem set on the $\phi$-dependent boundary $\partial \Omega[\phi]= \phi(\partial\Omega)$:
\begin{equation}\label{pb:phi}
K^\ast_{\phi(\partial \Omega)}[\psi] = \lambda \psi \quad \text{on} \quad \phi(\partial\Omega),
\end{equation}
where $\lambda \in \mathbb{R}$ and $\psi \in C^{0,\alpha}(\phi(\partial\Omega))$. We note that, thanks to Lemma \ref{lem:reg}, we do not lose any solutions by considering $\psi \in C^{0,\alpha}(\phi(\partial\Omega))$ instead of $L^2(\phi(\partial\Omega))$, since any $L^2(\phi(\partial\Omega))$ eigenfunction is also in $C^{0,\alpha}(\phi(\partial\Omega))$. However, it is not convenient to work with a space, $C^{0,\alpha}(\phi(\partial\Omega))$, that depends on the perturbation parameter $\phi$. Therefore, we pull back the problem to the fixed boundary $\partial \Omega$, and accordingly, we push forward the density from $\partial \Omega$ to $\phi(\partial\Omega)$.
Specifically, we define the operator
\[
K^\ast_\phi[\mu] := K^\ast_{\phi(\partial \Omega)}[\mu \circ \phi^{-1}] \circ \phi \quad \text{for all } \mu \in C^{0,\alpha}(\partial\Omega),
\]
and we consider the eigenvalue problem
\begin{equation}\label{pb:phi2}
K^\ast_{\phi}[\mu] = \lambda \mu \quad \text{on} \quad \partial\Omega,
\end{equation}
for $\lambda \in \mathbb{R}$ and $\mu \in C^{0,\alpha}(\partial\Omega)$. We observe that problem \eqref{pb:phi2} has exactly the same eigenvalues $\lambda$ as problem \eqref{pb:phi}, and the map $\mu\mapsto \mu\circ\phi^{-1}$ is a bijection between the corresponding eigenspaces.

\subsection{Shape analyticity of the Riesz projector}
Before showing the result for the symmetric functions of the eigenvalues, we prove an intermediate result on the real analyticity of the Riesz projector for the operator $K^\ast_{\phi}$. 

Let $\phi_0 \in \mathcal{A}^{1,\alpha}_{\partial\Omega}$ and $\gamma$ be a simple closed path in $\mathbb{C}$ that does not intersect $\sigma(K^*_{\phi_0})$.
We will show that the map that takes $\phi$ to $P_\gamma[K^\ast_{\phi}]$ is real analytic in a neighborhood of $\phi_0$ in $\mathcal{A}^{1,\alpha}_{\partial\Omega}$. We begin with a lemma.

\begin{lemma}\label{lem:realR}
Let $\alpha \in (0,1)$. Let $\Omega \subseteq \mathbb{R}^n$ satisfy assumptions \eqref{Omega_def}. Let $\phi_0 \in \mathcal{A}^{1,\alpha}_{\partial\Omega}$.
Let $\gamma$ be  a simple closed path in $\mathbb{C}$ such that $\Gamma\cap\sigma(K^*_{\phi_0})=\emptyset$.
 Then there exists an open  neighborhood  $\mathcal{U}_0$ of $\phi_0$ in $\mathcal{A}^{1,\alpha}_{\partial\Omega}$
such that $\Gamma \cap \sigma(K^\ast_{\phi})=\emptyset$ for all $\phi \in \mathcal{U}_0$, and the map from $\mathcal{U}_0$ to $C\left(\Gamma, \mathcal{L}(C^{0,\alpha}(\partial\Omega))\right)$ that takes $\phi$ to the map
\[
\Gamma \ni \xi \mapsto \left(K^\ast_{\phi}-\xi I\right)^{-1} \in  \mathcal{L}(C^{0,\alpha}(\partial\Omega)),
\]
is real analytic.
\end{lemma}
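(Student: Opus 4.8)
The plan is to reduce everything to the already-available analyticity of the map $\phi\mapsto K^*_\phi\in\mathcal{L}(C^{0,\alpha}(\partial\Omega))$, which follows from the Lanza--Rossi results cited in the text (see \cite{LaRo04, DaLuMu22}), combined with the fact that inversion of operators is real analytic on the open set of invertible operators. First I would fix $\phi_0$ and recall that, by the cited results, the map
\[
\mathcal{A}^{1,\alpha}_{\partial\Omega}\ni\phi\mapsto K^*_\phi\in\mathcal{L}\bigl(C^{0,\alpha}(\partial\Omega)\bigr)
\]
is real analytic. Hence, for fixed $\xi\in\mathbb{C}$, the map $(\phi,\xi)\mapsto K^*_\phi-\xi I$ is real analytic from $\mathcal{A}^{1,\alpha}_{\partial\Omega}\times\mathbb{C}$ to $\mathcal{L}(C^{0,\alpha}(\partial\Omega))$ (it is affine in $\xi$, analytic in $\phi$, and these combine since the target is a Banach algebra), where we view the complexification of $C^{0,\alpha}(\partial\Omega)$ implicitly so that subtraction of $\xi I$ makes sense.

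The second step is the topological one: I must produce the neighbourhood $\mathcal{U}_0$ on which $\Gamma\cap\sigma(K^*_\phi)=\emptyset$. Since $\Gamma=\gamma([0,1])$ is compact and $\Gamma\cap\sigma(K^*_{\phi_0})=\emptyset$, the operator $K^*_{\phi_0}-\xi I$ is invertible for every $\xi\in\Gamma$, and by compactness of $\Gamma$ together with continuity of $\xi\mapsto\|(K^*_{\phi_0}-\xi I)^{-1}\|$ there is a uniform bound $\sup_{\xi\in\Gamma}\|(K^*_{\phi_0}-\xi I)^{-1}\|\le C_0<\infty$. By continuity of $\phi\mapsto K^*_\phi$ (a consequence of analyticity) there is an open neighbourhood $\mathcal{U}_0$ of $\phi_0$ in $\mathcal{A}^{1,\alpha}_{\partial\Omega}$ such that $\|K^*_\phi-K^*_{\phi_0}\|_{\mathcal{L}(C^{0,\alpha}(\partial\Omega))}<1/(2C_0)$ for all $\phi\in\mathcal{U}_0$. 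Then a Neumann-series / standard perturbation-of-invertibility argument gives that $K^*_\phi-\xi I=(K^*_{\phi_0}-\xi I)\bigl(I+(K^*_{\phi_0}-\xi I)^{-1}(K^*_\phi-K^*_{\phi_0})\bigr)$ is invertible for every $\xi\in\Gamma$ and every $\phi\in\mathcal{U}_0$, with $\|(K^*_\phi-\xi I)^{-1}\|\le 2C_0$ uniformly in $\xi\in\Gamma$ and $\phi\in\mathcal{U}_0$; in particular $\Gamma\cap\sigma(K^*_\phi)=\emptyset$.

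The third and final step upgrades pointwise (in $\xi$) analyticity to analyticity into the space $C\bigl(\Gamma,\mathcal{L}(C^{0,\alpha}(\partial\Omega))\bigr)$. Here I would argue that the set $\mathrm{Inv}\subseteq\mathcal{L}(C^{0,\alpha}(\partial\Omega))$ of invertible operators is open and the inversion map $A\mapsto A^{-1}$ is real analytic on $\mathrm{Inv}$ (classical; its differential is $H\mapsto -A^{-1}HA^{-1}$, and locally it is given by a convergent Neumann series). Composing with the analytic map $\phi\mapsto(\xi\mapsto K^*_\phi-\xi I)\in C(\Gamma,\mathcal{L}(C^{0,\alpha}(\partial\Omega)))$ — which is analytic because $K^*_\phi$ is analytic in $\phi$ with values in $\mathcal{L}(C^{0,\alpha}(\partial\Omega))$ and the constant-in-$\phi$ map $\xi\mapsto -\xi I$ simply shifts it — and using that composition of real-analytic maps between Banach spaces is real analytic, one obtains that $\phi\mapsto\bigl(\xi\mapsto(K^*_\phi-\xi I)^{-1}\bigr)$ is real analytic from $\mathcal{U}_0$ into $C\bigl(\Gamma,\mathcal{L}(C^{0,\alpha}(\partial\Omega))\bigr)$. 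The one point requiring a little care — and the main (mild) obstacle — is checking that "$\mathrm{Inv}$ is open and inversion is analytic" transfers correctly to the Banach space $C(\Gamma,\mathcal{L}(C^{0,\alpha}(\partial\Omega)))$ of continuous operator-valued functions on $\Gamma$: an element $F$ of this space is pointwise invertible with a \emph{uniformly} bounded inverse precisely when it is invertible in this Banach algebra (with $(F^{-1})(\xi)=F(\xi)^{-1}$), and the uniform bound $\|(K^*_\phi-\xi I)^{-1}\|\le 2C_0$ obtained in step two is exactly what guarantees that our $F_\phi:\xi\mapsto K^*_\phi-\xi I$ lands in the open set of invertible elements of that algebra, so the abstract analyticity-of-inversion result applies verbatim. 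This completes the proof.
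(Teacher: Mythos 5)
Your proposal is correct and follows essentially the same route as the paper: real analyticity of $\phi\mapsto K^*_\phi$ from \cite{DaLuMu22}, passage to the Banach algebra $C\bigl(\Gamma,\mathcal{L}(C^{0,\alpha}(\partial\Omega))\bigr)$, openness of the set of invertible elements there, and analyticity of the inversion map. The only difference is that your second step (compactness of $\Gamma$ plus an explicit Neumann-series bound giving $\|(K^*_\phi-\xi I)^{-1}\|\le 2C_0$ uniformly) is folded by the paper into the abstract statement that the invertibles form an open set of that Banach algebra; your extra verification is harmless and in fact makes explicit why $(\xi\mapsto K^*_{\phi}-\xi I)$ stays in that open set.
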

Here above $\left(K^\ast_{\phi}-\xi I\right)^{-1}$ is the inverse of the operator $K^\ast_{\phi}-\xi I \in \mathcal{L}(C^{0,\alpha}(\partial\Omega))$.
\begin{proof}
We observe that $\mathcal{L}(C^{0,\alpha}(\partial\Omega))$ forms a Banach algebra, where the composition of operators acts as a non-commutative multiplication. Similarly, the space $C\left(\Gamma, \mathcal{L}(C^{0,\alpha}(\partial\Omega))\right)$ of continuous maps from $\Gamma$ to $\mathcal{L}(C^{0,\alpha}(\partial\Omega))$, equipped with the supremum norm, is a Banach algebra with non-commutative multiplication defined by
\[
(\xi\mapsto A(\xi))\ast (\xi\mapsto B(\xi)) := (\xi\mapsto A(\xi)\circ B(\xi))
\]
for all elements $(\xi\mapsto A(\xi))$ and $(\xi\mapsto B(\xi))$ in $C\left(\Gamma, \mathcal{L}(C^{0,\alpha}(\partial\Omega))\right)$.
If $\mathcal{I}$ denotes the set of elements in $C\left(\Gamma, \mathcal{L}(C^{0,\alpha}(\partial\Omega))\right)$ that are invertible with respect to $\ast$, then a standard argument based on the Neumann series shows that $\mathcal{I}$ is open in $C\left(\Gamma, \mathcal{L}(C^{0,\alpha}(\partial\Omega))\right)$, and the map that associates an element $(\xi\mapsto A(\xi))$ of $\mathcal{I}$ with its inverse $(\xi\mapsto A(\xi))^{-1}=(\xi\mapsto A(\xi)^{-1})$ is real analytic.

We now use the real analyticity of layer potential operators in Schauder spaces.
By \cite[Theorem 3.2 (iii)]{DaLuMu22}, we know that the map 
\[
\mathcal{A}^{1,\alpha}_{\partial\Omega} \ni \phi \mapsto K^\ast_{\phi} \in  \mathcal{L}(C^{0,\alpha}(\partial\Omega))
\]
is real analytic. Consequently, the map from $\mathcal{A}^{1,\alpha}_{\partial\Omega}$ to  $C\left(\Gamma, \mathcal{L}(C^{0,\alpha}(\partial\Omega))\right)$ that associates 
$\phi$ with 
\begin{equation}\label{reK}
\Gamma \ni \xi \mapsto (K^\ast_{\phi}-\xi I) \in  \mathcal{L}(C^{0,\alpha}(\partial\Omega)),
\end{equation}
is also real analytic. Moreover,  the map $(\xi \mapsto (K^\ast_{\phi_0}-\xi I))$ is an invertible element of 
$C\left(\Gamma, \mathcal{L}(C^{0,\alpha}(\partial\Omega))\right)$. Since we have seen that the set $\mathcal{I}$ of invertible maps is open in $C\left(\Gamma, \mathcal{L}(C^{0,\alpha}(\partial\Omega))\right)$, it follows that there exists an open neighborhood $\mathcal{U}_0$ 
of $\phi_0$ in $\mathcal{A}^{1,\alpha}_{\partial\Omega}$ such that 
\[
\left(\xi \mapsto (K^\ast_{\phi}-\xi I)\right) \in  \mathcal{I} \quad\text{for all $\phi \in \mathcal{U}_0$.}
\]
To complete the proof of the lemma, we compose the map that takes $\phi\in\mathcal{U}_0$ to the map in \eqref{reK} and the inversion map from $\mathcal{I}$ to $C\left(\Gamma, \mathcal{L}(C^{0,\alpha}(\partial\Omega))\right)$, and recall that the composition of real analytic maps remains real analytic.
\end{proof} 

As an immediate consequence of Lemma \ref{lem:realR} we obtain the desired result on the Riesz projector.

\begin{proposition}\label{prop:anR}
Let $\alpha \in (0,1)$. Let $\Omega \subseteq \mathbb{R}^n$ satisfy assumption \eqref{Omega_def}. Let $\phi_0 \in \mathcal{A}^{1,\alpha}_{\partial\Omega}$.
Let $\gamma$ be  a simple closed path in $\mathbb{C}$ such that $\Gamma\cap\sigma(K^*_{\phi_0})=\emptyset$.
 Then there exists an open  neighborhood  $\mathcal{U}_0$ of $\phi_0$ in $\mathcal{A}^{1,\alpha}_{\partial\Omega}$
such that  the map from $\mathcal{U}_0$ to $\mathcal{L}(C^{0,\alpha}(\partial\Omega))$ that takes $\phi$ to 
 $P_\gamma[K^\ast_{\phi}]$ is real analytic.
\end{proposition}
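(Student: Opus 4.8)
The plan is to obtain Proposition~\ref{prop:anR} as a direct corollary of Lemma~\ref{lem:realR}, exploiting the fact that the Riesz projector is, by definition, the contour integral
\[
P_\gamma[K^\ast_{\phi}] = -\frac{1}{2\pi i}\int_{\Gamma} \left(K^\ast_{\phi}-\xi I\right)^{-1}\,d\xi,
\]
so that it arises from the map $\xi\mapsto (K^\ast_{\phi}-\xi I)^{-1}$ studied in Lemma~\ref{lem:realR} by post-composing with the linear operation of integration over $\Gamma$. The key point is that integration along $\Gamma$ defines a bounded linear map from $C\left(\Gamma,\mathcal{L}(C^{0,\alpha}(\partial\Omega))\right)$ to $\mathcal{L}(C^{0,\alpha}(\partial\Omega))$, and bounded linear maps are real analytic.

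First I would invoke Lemma~\ref{lem:realR} to get the open neighborhood $\mathcal{U}_0$ of $\phi_0$ in $\mathcal{A}^{1,\alpha}_{\partial\Omega}$ on which $\Gamma\cap\sigma(K^\ast_\phi)=\emptyset$ and on which the map
\[
\mathcal{U}_0\ni\phi\mapsto\bigl(\Gamma\ni\xi\mapsto(K^\ast_{\phi}-\xi I)^{-1}\bigr)\in C\left(\Gamma,\mathcal{L}(C^{0,\alpha}(\partial\Omega))\right)
\]
is real analytic. In particular, since $\Gamma\cap\sigma(K^\ast_\phi)=\emptyset$ and $\Gamma^i$ contains only the eigenvalue(s) enclosed by $\gamma$, the operator $P_\gamma[K^\ast_\phi]$ is well defined for all $\phi\in\mathcal{U}_0$. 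Next I would observe that the linear map
\[
\mathcal{J}:C\left(\Gamma,\mathcal{L}(C^{0,\alpha}(\partial\Omega))\right)\to\mathcal{L}(C^{0,\alpha}(\partial\Omega)),\qquad \mathcal{J}(\xi\mapsto A(\xi)):=-\frac{1}{2\pi i}\int_{\Gamma}A(\xi)\,d\xi,
\]
is bounded: indeed $\|\mathcal{J}(\xi\mapsto A(\xi))\|\le \frac{\ell(\Gamma)}{2\pi}\sup_{\xi\in\Gamma}\|A(\xi)\|$, where $\ell(\Gamma)$ is the length of $\Gamma$, and the integral is a standard Riemann/Bochner integral of a continuous $\mathcal{L}(C^{0,\alpha}(\partial\Omega))$-valued function, hence well defined. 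A bounded linear map between Banach spaces is real analytic (its power series expansion about any point has only the constant and linear terms). Finally, $P_\gamma[K^\ast_\phi]=\mathcal{J}\bigl(\xi\mapsto(K^\ast_{\phi}-\xi I)^{-1}\bigr)$, so the map $\phi\mapsto P_\gamma[K^\ast_\phi]$ is the composition of the real analytic map from Lemma~\ref{lem:realR} with the real analytic (because bounded linear) map $\mathcal{J}$, and the composition of real analytic maps between open subsets of Banach spaces is real analytic. This gives the claim.

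There is no serious obstacle here; the proposition is genuinely ``an immediate consequence'' of the lemma as the text announces. The only points requiring a word of care are (i) recalling that a bounded linear operator is real analytic, and (ii) checking that the $\mathcal{L}(C^{0,\alpha}(\partial\Omega))$-valued contour integral is well posed and depends continuously/linearly on its integrand in the sup norm on $C\left(\Gamma,\mathcal{L}(C^{0,\alpha}(\partial\Omega))\right)$ — both of which are routine. One should also note that, by the remarks preceding Lemma~\ref{lem:kerP}, the value of $P_\gamma[K^\ast_\phi]$ does not depend on the particular choice of path $\gamma$ around the enclosed spectrum, so the statement is intrinsic once $\mathcal{U}_0$ is small enough that the enclosed part of $\sigma(K^\ast_\phi)$ varies continuously and stays inside $\Gamma^i$.
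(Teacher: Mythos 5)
Your proposal is correct and follows exactly the paper's own argument: invoke Lemma~\ref{lem:realR} for the neighborhood $\mathcal{U}_0$ and the real analyticity of $\phi\mapsto(\xi\mapsto(K^\ast_\phi-\xi I)^{-1})$, observe that integration over $\Gamma$ is a bounded linear (hence real analytic) map from $C\left(\Gamma,\mathcal{L}(C^{0,\alpha}(\partial\Omega))\right)$ to $\mathcal{L}(C^{0,\alpha}(\partial\Omega))$, and conclude by composition of real analytic maps. The additional remarks you include (the explicit norm bound and path-independence) are fine but not needed for the statement.
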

\begin{proof}
The map
\[
C\left(\Gamma, \mathcal{L}(C^{0,\alpha}(\partial\Omega))\right) \ni f \mapsto \int_{\Gamma}f(\xi)\,d\xi \in \mathcal{L}\left(C^{0,\alpha}(\partial\Omega)\right)
\]
is linear and bounded, and hence, it is real analytic. Consequently, the statement follows by composing this map with the real analytic map from Lemma \ref{lem:realR}, and recalling that the composition of real analytic maps remains real analytic.
\end{proof}

 \subsection{Shape analyticity of the symmetric functions}
We are now ready to prove the real analyticity result for the simple eigenvalues and the symmetric functions of multiple eigenvalues of equation \eqref{pb:phi2}. As noted in the introduction, we opt to consider symmetric functions rather than analytic branches of eigenvalues due to the multidimensional nature of the shape function $\phi\in \mathcal{A}^{1,\alpha}_{\partial\Omega}$ (which is, indeed, infinite-dimensional). In fact, the existence of analytic branches is notoriusly not guaranteed in the case of multidimensional perturbations (cf.~Rellich \cite[p.~37]{Re69}). However, we may use Theorem \ref{main} below to recover a result for the analytic branches by introducing a one-dimensional analytic parametrization $t\mapsto\phi_t$ of the shape function (cf.~Section \ref{sec:grieser}). 

We recall the definition of symmetric function introduced earlier. For $m \in \mathbb{N}$, $m \geq 1$, and $h \in \{1,\ldots,m\}$, the symmetric function $\Lambda_h^m$ is defined by
\begin{equation}\label{symmetric}
\Lambda_h^m(\xi_{1},\ldots, \xi_{m}) := \sum_{\substack{j_1,\ldots,j_h\in \{1,\ldots, m\}\\ j_1 < \cdots < j_h}} \xi_{j_1} \cdots \xi_{j_h},
\end{equation}
for all $\xi_1,\ldots,\xi_m \in \mathbb{R}$.

Additionally, for a diffeomorphism \(\phi \in \mathcal{A}^{1,\alpha}_{\partial\Omega}\), we define the operator \( S_\phi\), which maps \(\mu \in C^{0,\alpha}(\partial\Omega)\) to
\[
S_\phi[\mu] := S_{\phi(\partial\Omega)}[\mu\circ\phi^{-1}] \circ \phi \in C^{1,\alpha}(\partial\Omega),
\]
(recall that the single-layer operator maps \(C^{0,\alpha}(\partial\Omega)\) to \(C^{1,\alpha}(\partial\Omega)\); see Section \ref{sec:potential}). We then consider the inner product 
\begin{equation}\label{<>sS}
\langle f,g\rangle_{\sigma_n[\phi] S_\phi} := \langle \sigma_n[\phi]\, S_\phi[f],g\rangle_{L^2(\partial\Omega)} = \langle S_{\phi(\partial\Omega)}[f\circ\phi^{-1}],g\circ\phi^{-1}\rangle_{L^2(\phi(\partial\Omega))},
\end{equation}
for all \(f,g\in C^{0,\alpha}(\partial\Omega)\). The function \(\sigma_n[\phi]\) in \eqref{<>sS} accounts for the change of variable in the area element, so that
\[
\int_{\phi(\partial\Omega)}f\circ\phi^{-1}\,d\sigma=\int_{\partial\Omega}f\,\sigma_n[\phi]\,d\sigma,
\]
for all \(f\in L^1(\partial\Omega)\). The explicit form of \(\sigma_n[\phi]\) can be found in \cite{LaRo04}, where it is also shown that \(\sigma_n[\phi] \in C^{0,\alpha}(\partial\Omega)\), and that the map sending \(\phi\in \mathcal{A}^{1,\alpha}_{\partial\Omega}\) to \(\sigma_n[\phi]\in C^{0,\alpha}(\partial\Omega)\) is real analytic.

We can now prove the following

\begin{theorem}\label{main}
Let $\alpha \in (0,1)$. Let $\Omega \subseteq \mathbb{R}^n$ satisfy assumption \eqref{Omega_def}. Let $\phi_0 \in \mathcal{A}^{1,\alpha}_{\partial\Omega}$,  and let 
$\lambda \neq 0$ be an eigenvalue of  $K^\ast_{\phi_0}$ of multiplicity $m \in \mathbb{N}$, $m\geq 1$. Then the following statements hold:
\begin{itemize}
\item[(i)] There exist $\delta >0$ and an open neighborhood $\mathcal{U}$ of $\phi_0$ in $\mathcal{A}^{1,\alpha}_{\partial\Omega}$ such that for all $\phi \in \mathcal{U}$, the set $(\lambda-\delta,\lambda+\delta)$ contains $m$ eigenvalues $\lambda_{1}[\phi]\le\lambda_2[\phi]\le\dots\le\lambda_{m}[\phi]$ of $K^\ast_{\phi}$, counting multiplicities.
\item[(ii)] There exist real analytic maps $\mathcal{U}\ni\phi\mapsto \mu_j[\phi]\in C^{0,\alpha}(\partial\Omega)$, $j=1,\ldots,m$, such that $\{\mu_1[\phi], \ldots, \mu_m[\phi]\}$ forms a $(\langle \cdot,\cdot\rangle_{\sigma_n[\phi] S_\phi})$-orthonormal basis for the space spanned by the eigenfunctions corresponding to $\lambda_{1}[\phi]\le\lambda_2[\phi]\le\dots\le\lambda_{m}[\phi]$.
\item[(iii)] The function taking $\phi\in \mathcal{U}$ to $\Lambda_h^m(\lambda_{1}[\phi],\ldots,\lambda_{m}[\phi])\in\mathbb{R}$ is real analytic for all $h \in \{1,\ldots, m\}$.
\end{itemize}
\end{theorem}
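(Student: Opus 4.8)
The plan is to combine the analyticity of the Riesz projector from Proposition \ref{prop:anR} with an abstract construction of an analytic family of bases for a finite-dimensional analytically-varying subspace, and then invoke the classical fact that the symmetric functions of the eigenvalues of a self-adjoint matrix are polynomial (hence analytic) in the matrix entries. First I would fix a simple closed path $\gamma$ in $\mathbb{C}$ surrounding $\lambda$ but no other point of $\sigma(K^\ast_{\phi_0})$ (possible since $\lambda$ is isolated, $K^\ast_{\phi_0}$ being compact), with $0 \notin \Gamma^i$. By Proposition \ref{prop:anR} there is a neighborhood $\mathcal{U}_0$ of $\phi_0$ on which $\phi \mapsto P_\gamma[K^\ast_{\phi}] \in \mathcal{L}(C^{0,\alpha}(\partial\Omega))$ is real analytic, and (shrinking $\mathcal{U}_0$) the projectors $P_\gamma[K^\ast_{\phi}]$ all have the same rank $m$: indeed $\| P_\gamma[K^\ast_{\phi}] - P_\gamma[K^\ast_{\phi_0}] \|$ is small, and two projections at distance less than $1$ are similar, hence have equal finite rank $m = \dim \ker(K^\ast_{\phi_0} - \lambda)$ (here I use Lemma \ref{lem:kerP}, noting $K^\ast_\phi$ is compact and symmetrizable via $\sigma_n[\phi] S_\phi$, exactly as in Section \ref{sec:potential}). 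This gives item (i) once one knows the eigenvalues of $K^\ast_\phi$ inside $\Gamma^i$ are exactly those of the restriction $K^\ast_\phi|_{M'[\phi]}$ with $M'[\phi] := P_\gamma[K^\ast_\phi]\, C^{0,\alpha}(\partial\Omega)$, which are real because that restriction is self-adjoint for $\langle\cdot,\cdot\rangle_{\sigma_n[\phi] S_\phi}$; choosing $\delta$ with $(\lambda-\delta,\lambda+\delta) \subseteq \Gamma^i$ and no other real point of $\Gamma^i$ outside this interval does it.

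Next, for item (ii), I would produce an analytic basis of $M'[\phi]$. The standard device (cf.\ Kato) is to set $U[\phi] := \big(I - (P_\gamma[K^\ast_\phi] - P_\gamma[K^\ast_{\phi_0}])^2\big)^{-1/2}\big(P_\gamma[K^\ast_\phi] P_\gamma[K^\ast_{\phi_0}] + (I-P_\gamma[K^\ast_\phi])(I-P_\gamma[K^\ast_{\phi_0}])\big)$, which for $\phi$ near $\phi_0$ is a well-defined invertible operator depending real-analytically on $\phi$ (the square root of an operator close to the identity is analytic, being given by a convergent binomial series) and satisfies $U[\phi]\, M'[\phi_0] = M'[\phi]$. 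Fixing any basis $\{\mu_1^0,\ldots,\mu_m^0\}$ of $M'[\phi_0]$, the functions $\tilde\mu_j[\phi] := U[\phi] \mu_j^0$ form an analytic basis of $M'[\phi]$. To make it $\langle\cdot,\cdot\rangle_{\sigma_n[\phi] S_\phi}$-orthonormal I would start from an $S_{\phi_0}$-orthonormal basis at $\phi_0$ and apply Gram--Schmidt to $\{\tilde\mu_j[\phi]\}$ with respect to the inner product $\langle\cdot,\cdot\rangle_{\sigma_n[\phi] S_\phi}$: since $\phi \mapsto \sigma_n[\phi]$ and $\phi \mapsto S_\phi$ are real analytic (stated just before the theorem, and $S_\phi$ via the cited layer-potential analyticity results) and Gram--Schmidt involves only sums, products, and reciprocals of square roots of Gram-matrix entries which stay positive near $\phi_0$, the resulting basis $\{\mu_1[\phi],\ldots,\mu_m[\phi]\}$ depends real-analytically on $\phi$, proving (ii).

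Finally, item (iii). With $\{\mu_1[\phi],\ldots,\mu_m[\phi]\}$ an orthonormal basis of $M'[\phi]$, define the $m\times m$ matrix $A[\phi]$ with entries $A_{ij}[\phi] := \langle K^\ast_\phi[\mu_j[\phi]], \mu_i[\phi]\rangle_{\sigma_n[\phi] S_\phi}$. Each entry is a composition of the analytic maps $\phi\mapsto K^\ast_\phi$, $\phi\mapsto\mu_j[\phi]$, $\phi\mapsto\sigma_n[\phi]$, $\phi\mapsto S_\phi$ with the (bounded bilinear, hence analytic) pairings, so $\phi\mapsto A[\phi]$ is real analytic into the space of $m\times m$ matrices. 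Because $K^\ast_\phi|_{M'[\phi]}$ is self-adjoint for $\langle\cdot,\cdot\rangle_{\sigma_n[\phi] S_\phi}$, the matrix $A[\phi]$ is symmetric and its eigenvalues are exactly $\lambda_1[\phi]\le\cdots\le\lambda_m[\phi]$ (counted with multiplicity). By \eqref{secular}, $\Lambda_h^m(\lambda_1[\phi],\ldots,\lambda_m[\phi])$ is, up to sign, a coefficient of $\det(A[\phi]-\tau I_m)$, hence a polynomial in the entries of $A[\phi]$; composing this polynomial with the analytic map $\phi\mapsto A[\phi]$ yields the real analyticity of $\phi\mapsto\Lambda_h^m(\lambda_1[\phi],\ldots,\lambda_m[\phi])$, which is (iii). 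The main obstacle I anticipate is item (ii): getting a genuinely real-analytic (not merely continuous or smooth) orthonormal basis requires care that the operator square root used in $U[\phi]$ and the Gram--Schmidt denominators are analytic, i.e.\ that the relevant operators stay in the region where the binomial series converges and the Gram determinants stay positive; this is exactly what shrinking $\mathcal{U}$ around $\phi_0$ buys us, but it must be stated explicitly.
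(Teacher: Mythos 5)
Your proposal is correct and follows the same overall strategy as the paper: analyticity of the Riesz projector (Proposition~\ref{prop:anR}), construction of an analytic $\langle\cdot,\cdot\rangle_{\sigma_n[\phi] S_\phi}$-orthonormal basis of the $m$-dimensional invariant subspace by Gram--Schmidt, and then reduction of (iii) to the polynomial dependence of the coefficients of $\det(A[\phi]-\tau I_m)$ on the analytic matrix entries via \eqref{secular}. The only differences are in two standard devices. For (i), the paper invokes Kato's continuity theorem for separated parts of the spectrum \cite[Theorem 3.16, p.~212]{Ka95} to get that the part of $\sigma(K^\ast_\phi)$ inside $\Gamma$ has total multiplicity $m$, whereas you deduce constancy of $\operatorname{rank} P_\gamma[K^\ast_\phi]$ from the fact that two projections at distance less than $1$ are similar; these are equivalent (the cited Kato theorem is essentially proved that way). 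For (ii), the paper projects a fixed $L^2$-orthonormal basis $\{\eta_j\}$ of the unperturbed eigenspace and verifies linear independence of $\{P_\gamma[K^\ast_\phi]\eta_j\}$ by continuity of $\det M_\phi$ with $M_{\phi_0}=I$ (using \eqref{Puu}), whereas you transport a fixed basis by Kato's transformation function $U[\phi]=(I-(P_\gamma[K^\ast_\phi]-P_\gamma[K^\ast_{\phi_0}])^2)^{-1/2}(P_\gamma[K^\ast_\phi]P_\gamma[K^\ast_{\phi_0}]+(I-P_\gamma[K^\ast_\phi])(I-P_\gamma[K^\ast_{\phi_0}]))$; both produce an analytic basis and both require shrinking $\mathcal{U}$, and the paper's version is marginally more elementary since it avoids the operator square root. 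The one point you pass over quickly, as does the paper, is the identification of the range of $P_\gamma[K^\ast_\phi]$ computed in $\mathcal{L}(C^{0,\alpha}(\partial\Omega))$ with the eigenspace obtained from Lemma~\ref{lem:kerP} in the Hilbert-space setting; this is harmless because the eigenvalues enclosed by $\gamma$ are nonzero, so Lemma~\ref{lem:reg} ensures the two ranges coincide, but it deserves a sentence.
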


In the proof of Theorem \ref{main}, we simplify the term ``the space spanned by eigenfunctions corresponding to a set of eigenvalues'' by referring to it as the ``generalized eigenspace.''

\begin{proof} The proof of statement (i) heavily relies on a result from Kato's book \cite{Ka95}. We choose $\delta >0$ so that $[\lambda-\delta, \lambda+\delta] \cap \sigma(K^\ast_{\phi_0}) = \{\lambda\}$, and we let $\gamma$ be a simple closed path around $\lambda$  with positive orientation such that $\Gamma^i \cap \mathbb{R} = (\lambda-\delta, \lambda+\delta)$. Since $K^\ast_{\phi_0}$ is compact on $L^2(\partial\Omega)$, $0$ belongs to its spectrum, and thus $(\lambda-\delta, \lambda+\delta)$ cannot contain $0$. Therefore, the interval $(\lambda-\delta, \lambda+\delta)$ may only contain non-zero eigenvalues of $K^\ast_{\phi}$, whose eigenspaces consist of functions in $C^{0,\alpha}(\partial\Omega)$  by Lemma \ref{lem:reg}. Thus, in the proof, we can consider the operator $K^\ast_{\phi}$ acting on $C^{0,\alpha}(\partial\Omega)$ instead of $L^2(\partial\Omega)$. The advantage is that the map taking $\phi\in \mathcal{A}^{1,\alpha}_{\partial\Omega}$ to $K^\ast_\phi\in\mathcal{L}(C^{0,\alpha}(\partial\Omega))$ is real analytic, as stated in \cite[Theorem 3.2 (iii)]{DaLuMu22}, and therefore continuous.  Consequently, as $\phi$ tends to $\phi_0$, the graph distance between $K^\ast_\phi$ and $K^\ast_{\phi_0}$ tends to zero (cf.~\cite[Eq.~(2.27), p.~203]{Ka95}). Then, by \cite[Theorem 3.16, p.~212]{Ka95}, we deduce that for $\phi$ in a sufficiently small neighborhood $\mathcal{U}$ of $\phi_0$ in $(C^{1,\alpha}(\partial\Omega))^n$, the path $\gamma$ splits the spectrum $\sigma(K^\ast_\phi)$ into two parts, denoted $\sigma'(K^\ast_\phi)$ and $\sigma''(K^\ast_\phi)$, with $\sigma'(K^\ast_\phi)$ being the part of the spectrum enclosed by $\gamma$. Moreover, \cite[Theorem 3.16, p.~212]{Ka95} also guarantees that the generalized eigenspace $V'(K^\ast_\phi)$ corresponding to $\sigma'(K^\ast_\phi)$ is isomorphic to the generalized eigenspace $V'(K^\ast_{\phi_0})$ corresponding to $\sigma'(K^\ast_{\phi_0})$. Since we assume that $\lambda$ has multiplicity $m$, $V'(K^\ast_{\phi_0})$ has dimension $m$, and thus $V'(K^\ast_\phi)$ has dimension $m$ as well. That is, the generalized eigenspace corresponding to the eigenvalues surrounded by $\gamma$ has dimension $m$. Since all eigenvalues of $K^\ast_\phi$ are real, and we assume that $\Gamma^i \cap \mathbb{R} = (\lambda-\delta, \lambda+\delta)$, we conclude that, for $\phi\in\mathcal{U}$, $K^\ast_\phi$ has exactly $m$ eigenvalues in $(\lambda-\delta, \lambda+\delta)$, counting multiplicities. Hence, statement (i) is proved.

We now move on to statement (ii), which we prove using an argument of \cite{LaLa04}. To begin, consider a basis $\{\eta_1,\ldots,\eta_m\} \subseteq C^{0,\alpha}(\partial\Omega)$ for $V'(K^\ast_{\phi_0})$, assumed to be orthonormal with respect to the $L^2(\partial\Omega)$ inner product. We define the $(m \times m)$ real matrix $M_\phi$ as follows:
\[ M_{\phi}:= \Big(\langle P_\gamma[K^\ast_{\phi}]\eta_i,\eta_j\rangle_{L^2(\partial\Omega)}\Big)_{i,j=1,\dots,m}\,. \]
Due to the real analyticity of the Riesz projector as proven in Proposition \ref{prop:anR}, there exists an open neighborhood $\mathcal{U}_0$ of $\phi_0$ in $\mathcal{A}^{1,\alpha}_{\partial\Omega}$ such that the functions mapping $\phi\in \mathcal{U}_0 $ to the entries of $M_{\phi}$ are real analytic for all $i,j=1,\dots,m$. In particular, the function mapping $\phi\in\mathcal{U}_0$ to $\mathrm{det}\,M_\phi$ is continuous. Since $M_{\phi_0} = I_n$ by \eqref{Puu}, for $\phi$ in a sufficiently small neighborhood of $\phi_0$, $\mathrm{det}\,M_\phi\neq 0$, and the functions 
\[ P_\gamma[K^\ast_{\phi}]\eta_1,\ldots,\,P_\gamma[K^\ast_{\phi}]\eta_m \]
are linearly independent. By statement (i), we know that the space $V'(K^\ast_\phi)$ has dimension $m$ for $\phi\in \mathcal{U}$. Additionally, by Remark \ref{rem:kerP}, we know that the vectors $P_\gamma[K^\ast_{\phi}]\eta_j$ are contained in $V'(K^\ast_\phi)=P_\gamma[K^\ast_{\phi}]L^2(\partial\Omega)$. Therefore, by possibly shrinking the neighborhood $\mathcal{U}$ of $\phi_0$, $\{P_\gamma[K^\ast_{\phi}]\eta_j\}_{j=1,\dots,m}$ forms a basis for  $V'(K^\ast_\phi)$ for all $\phi\in \mathcal{U}$. 

Next, we use a Gram-Schmidt process to orthonormalize  $\{P_\gamma[K^\ast_{\phi}]\eta_j\}_{j=1,\dots,m}$ with respect to the $\langle \cdot,\cdot\rangle_{\sigma_n[\phi] S_\phi}$ inner product. This yields an orthonormal basis $\{\mu_1[\phi],\ldots,\mu_m[\phi]\}$ for $V'(K^\ast_\phi)$. Since the map that takes $\phi\in\mathcal{A}^{1,\alpha}_{\partial\Omega}$ to $\sigma_n[\phi]\,  S_\phi\in \mathcal{L}(C^{0,\alpha}(\partial\Omega))$ is real analytic (cf.~Lanza and Rossi \cite{LaRo04}  and \cite[Theorem 3.2 (i)]{DaLuMu22}), and given the real analyticity of the Riesz projector from Proposition \ref{prop:anR},  we can verify that the maps $\mathcal{U}\ni\phi\mapsto \mu_j[\phi]\in C^{0,\alpha}(\partial\Omega)$ are real analytic for all $j=1,\ldots,m$. With this, statement (ii) is proven.

Finally, let's address statement (iii). For $\phi\in \mathcal{U}$, we define the matrix
\[ {A}_\phi :=  \Big(\langle {K}^\ast_{\phi}\left[\mu_i[\phi]\right],\mu_j[\phi]\rangle_{\sigma_n[\phi] S_\phi}  \Big)_{i,j=1,\dots,m}, \]
which represents the operator $K^\ast_{\phi}$ on $V'(K^\ast_\phi)$ with respect to the basis $\{\mu_j[\phi]\}_{j=1,\dots,m}$. The matrix $A_\phi$ has the same eigenvalues as the operator $K^\ast_{\phi}$ restricted to $V'(K^\ast_\phi)$, i.e., the same eigenvalues that the operator $K^\ast_{\phi}$ has in $(\lambda-\delta,\lambda+\delta)$. Let's denote these eigenvalues as $\lambda_{1}[\phi]\le\lambda_2[\phi]\le\dots\le\lambda_{m}[\phi]$.

Using \eqref{secular}, we know that the symmetric functions $\Lambda^m_h(\lambda_{1}[\phi],\ldots,\lambda_{m}[\phi])$ are real analytic functions of the entries of the matrix $A_\phi$. Since the functions mapping $\phi\in \mathcal{U}$ to the entries of $A_\phi$ are real analytic, as we can verify using \cite[Theorem 3.2 (iii)]{DaLuMu22} and the real analyticity of the maps $\phi\mapsto\mu_j[\phi]$, we conclude that the symmetric functions $\Lambda^m_h(\lambda_{1}[\phi],\ldots,\lambda_{m}[\phi])$ depend real analytically on $\phi\in \mathcal{U}$. With this, statement (iii) has been proven.
\end{proof}

In Theorem \ref{main}, we could have considered the standard inner product induced by $L^2(\partial\Omega)$ instead of $\langle \cdot,\cdot\rangle_{\sigma_n[\phi] S_\phi}$. By doing so, we would have obtained an $L^2(\partial\Omega)$-orthonormal basis for the  space spanned by the eigenfunctions corresponding to $\lambda_{1}[\phi]\le\lambda_2[\phi]\le\dots\le\lambda_{m}[\phi]$. In the definition of the matrix $A_\phi$ introduced in the proof, we should have used the $L^2(\partial\Omega)$ inner product. However, the advantage of using $\langle \cdot,\cdot\rangle_{\sigma_n[\phi] S_\phi}$ lies in the fact that $K^*_\phi$ is self-adjoint with respect to this inner product, as we can see by computing: 
\begin{equation}\label{dueconti}
\begin{split}
\langle f, K^*_\phi[g]\rangle_{\sigma_n[\phi] S_\phi}&=\int_{\partial\Omega}S_\phi[f]\overline{K^*_\phi[g]} \sigma_n[\phi] \,d\sigma\\
&=\int_{\phi(\partial\Omega)} S_{\phi(\partial\Omega)}[f\circ\phi^{-1}]\,\overline{K^*_{\phi(\partial\Omega)}[g\circ\phi^{-1}]}\, d\sigma\\
&=\int_{\phi(\partial\Omega)} K_{\phi(\partial\Omega)}[S_{\phi(\partial\Omega)}[f\circ\phi^{-1}]]\,\overline{g\circ\phi^{-1}}\, d\sigma\\
&=\int_{\phi(\partial\Omega)} S_{\phi(\partial\Omega)}[K^*_{\phi(\partial\Omega)}[f\circ\phi^{-1}]]\,\overline{g\circ\phi^{-1}}\, d\sigma=\langle K^*_\phi[f],g \rangle_{\sigma_n[\phi] S_\phi}\,.
\end{split}
\end{equation}
The equality above will prove to be crucial in the subsequent sections. 

 In what follows, we will omit the conjugate symbol when it is not necessary, particularly when dealing with scalar products involving eigenfunctions of \(K^*_\phi\) or solutions to the plasmonic problem, which can always be chosen to be real.

We also note that the argument used to prove statement (i) of Theorem \ref{main} also establishes the continuity of the eigenvalues $\lambda_1[\phi]$, $\dots$, $\lambda_m[\phi]$ at $\phi_0$ (see also Kato \cite[IV \S3.5]{Ka95}). Specifically, we have the following proposition:

\begin{proposition}\label{continuity} Under the assumptions and notation of Theorem \ref{main}, the eigenvalues $\lambda_1[\phi]$, $\dots$, $\lambda_m[\phi]$ converge to $\lambda$ as $\|\phi-\phi_0\|_{(C^{1,\alpha}(\partial\Omega))^n}$ tends to zero.
\end{proposition}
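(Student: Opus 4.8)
The plan is to extract continuity directly from the machinery already developed in the proof of Theorem~\ref{main}, rather than re-running the Kato perturbation estimates. The key observation is that statement (iii) of Theorem~\ref{main} gives, in particular, the \emph{continuity} of each symmetric function $\phi \mapsto \Lambda_h^m(\lambda_1[\phi],\ldots,\lambda_m[\phi])$ on $\mathcal{U}$. By the definition of the $\Lambda_h^m$ (see \eqref{secular}), the tuple $\big(\Lambda_1^m(\lambda_1[\phi],\ldots,\lambda_m[\phi]),\ldots,\Lambda_m^m(\lambda_1[\phi],\ldots,\lambda_m[\phi])\big)$ consists, up to sign, of the coefficients of the monic polynomial $p_\phi(\tau) := \prod_{j=1}^m (\tau - \lambda_j[\phi])$, and these coefficients depend continuously on $\phi$ at $\phi_0$. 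Since at $\phi_0$ all the eigenvalues $\lambda_j[\phi_0]$ equal $\lambda$ (because $\lambda$ has multiplicity $m$), we have $p_{\phi_0}(\tau) = (\tau - \lambda)^m$, and the goal reduces to the classical fact that the roots of a monic polynomial depend continuously on its coefficients.

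First I would recall precisely why the eigenvalues $\lambda_1[\phi] \le \cdots \le \lambda_m[\phi]$ of Theorem~\ref{main}(i) are exactly the $m$ roots of $p_\phi$: by construction in the proof of Theorem~\ref{main}(iii), these are the eigenvalues of the $(m\times m)$ real symmetric matrix $A_\phi$ (symmetric with respect to $\langle\cdot,\cdot\rangle_{\sigma_n[\phi]S_\phi}$), hence real and exactly $m$ in number counting multiplicity, and the characteristic polynomial of $A_\phi$ is precisely $p_\phi$ by \eqref{secular}. Then I would invoke the continuity-of-roots statement: if $p_k \to p$ uniformly on compact sets (equivalently, coefficient-wise) for monic polynomials of fixed degree $m$, then for any $\varepsilon > 0$ there is $k_0$ such that for $k \ge k_0$ every root of $p_k$ lies within $\varepsilon$ of the root set of $p$. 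Applied here with $p = (\tau-\lambda)^m$, whose only root is $\lambda$, this yields: for every $\varepsilon>0$ there is a neighborhood $\mathcal{U}_\varepsilon \subseteq \mathcal{U}$ of $\phi_0$ such that $|\lambda_j[\phi] - \lambda| < \varepsilon$ for all $j = 1,\ldots,m$ and all $\phi \in \mathcal{U}_\varepsilon$. Since $\mathcal{U}$ is a neighborhood of $\phi_0$ in $\mathcal{A}^{1,\alpha}_{\partial\Omega}$, which is open in $(C^{1,\alpha}(\partial\Omega))^n$, this is the same as saying $\lambda_j[\phi] \to \lambda$ as $\|\phi-\phi_0\|_{(C^{1,\alpha}(\partial\Omega))^n} \to 0$.

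An alternative, essentially equivalent route — which is the one hinted at in the text — is to argue directly from the Kato-theoretic ingredients of the proof of Theorem~\ref{main}(i): the graph distance between $K^*_\phi$ and $K^*_{\phi_0}$ tends to zero as $\phi \to \phi_0$ (by continuity of $\phi \mapsto K^*_\phi \in \mathcal{L}(C^{0,\alpha}(\partial\Omega))$ from \cite[Theorem 3.2 (iii)]{DaLuMu22}), and then upper semicontinuity of (parts of) the spectrum under such convergence (cf.~Kato \cite[IV \S3.5]{Ka95}) forces the $m$ eigenvalues trapped inside $\Gamma^i$ to concentrate near $\lambda$; one shrinks $\delta$ in the construction to any prescribed $\varepsilon$ and reapplies \cite[Theorem 3.16, p.~212]{Ka95}. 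I would present the polynomial-roots argument as the main proof since it is short and self-contained given Theorem~\ref{main}, and mention the Kato route parenthetically.

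I do not anticipate a serious obstacle here: the statement is genuinely a corollary. The only point requiring a little care is making sure the reader sees that the $\lambda_j[\phi]$ are literally the roots of the polynomial whose coefficients Theorem~\ref{main}(iii) controls — i.e.\ that no eigenvalue inside $(\lambda-\delta,\lambda+\delta)$ is omitted and none is double-counted — but this is already nailed down in the proof of Theorem~\ref{main} via the matrix $A_\phi$ and the dimension count $\dim V'(K^*_\phi) = m$. The continuous dependence of roots on coefficients is standard (it follows, e.g., from Rouch\'e's theorem or from Hurwitz's theorem) and can simply be cited.
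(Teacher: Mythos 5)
Your proposal is correct. Your primary argument (continuity of the symmetric functions from Theorem \ref{main}(iii), hence of the coefficients of the characteristic polynomial of $A_\phi$, combined with continuous dependence of the roots of a monic polynomial on its coefficients, applied at $p_{\phi_0}(\tau)=(\tau-\lambda)^m$) is sound: the identification of $\lambda_1[\phi],\dots,\lambda_m[\phi]$ with the roots of $\det(A_\phi-\tau I_m)$ is indeed established in the proof of Theorem \ref{main}(iii), so nothing is missed or double-counted. However, this is not the route the paper takes. The paper's proof is exactly the alternative you relegate to a parenthetical remark: one simply observes that the argument for statement (i) of Theorem \ref{main} works for every $0<\delta'<\delta$, producing a possibly smaller neighborhood $\mathcal{U}'\subseteq\mathcal{U}$ on which all $m$ eigenvalues are trapped in $(\lambda-\delta',\lambda+\delta')$. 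The trade-off is mild: the paper's argument depends only on statement (i) and is a one-line re-reading of its proof, whereas yours leans on the full strength of statement (iii) (hence on (ii) as well) plus an external fact about polynomial roots, but in exchange it is self-contained once Theorem \ref{main} is taken as a black box and does not require reopening the Kato machinery. Either version is acceptable; if you keep the polynomial-roots proof as the main one, you should still make explicit, as you do, that the $\lambda_j[\phi]$ of statement (i) coincide with the eigenvalues of $A_\phi$.
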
 
\begin{proof} Let $\delta$ be as defined in statement (i) of Theorem \ref{main}. Through the proof of statement (i) of Theorem \ref{main}, it follows that for all $0<\delta'<\delta$, there exists a neighborhood $\mathcal{U}'\subseteq\mathcal{U}$ of $\phi_0$ such that $\lambda_1[\phi]$, $\dots$, $\lambda_m[\phi]$ are contained within $(\lambda-\delta',\lambda+\delta')$ for all $\phi\in \mathcal{U}'$. Thus, the statement follows.
\end{proof}

\section{First derivative of the symmetric functions of the NP-eigenvalues}\label{derivatives}

Now that we know from Theorem \ref{main} that the map \(\phi \mapsto \Lambda_h^m(\lambda_{1}[\phi], \ldots, \lambda_{m}[\phi])\) is real analytic and thus differentiable, our next goal is to derive an explicit expression for the differential
\begin{equation}\label{dLambda.eq0}
d_\phi \Lambda_h^m(\lambda_{1}[\phi_0], \ldots, \lambda_{m}[\phi_0]),
\end{equation}
which will be done by computing the differential of the matrix $A_\phi$
from the proof of Theorem \ref{main}, and then applying equality \eqref{secular}, which relates the symmetric functions to the characteristic polynomial of \(A_\phi\). The formula for the differential of \(A_\phi\) will be given in Theorem \ref{dAphi}, and the expression for \eqref{dLambda.eq0} will be provided in Theorem \ref{dLambda}.

In the following sections, we explain the steps leading to Theorems \ref{dAphi} and \ref{dLambda}. First, in Section \ref{LtoK}, we show that the task of computing the \(\phi\)-differential of \(A_\phi\) can be reduced to finding the differential \(d_\phi K^*_{\phi_0}\) of the operator \(K^*_{\phi}\). Then, in Section \ref{fromK*toK}, we demonstrate that we can alternatively use the differential \(d_\phi K_{\phi_0}\) of the adjoint operator \(K_{\phi}\). This approach is advantageous because \(d_\phi K_{\phi_0}[c] = 0\) when \(c\) is a constant function. Thus, for a fixed \(x \in \partial \Omega\), we can replace a density function \(\eta\) with \( \eta - \eta(x) \), yielding \(d_\phi K_{\phi_0}[\eta](x) = d_\phi K_{\phi_0}[\eta - \eta(x)](x)\). This allows us to take advantage of the extra integrability provided by the modified density \( \eta - \eta(x) \) to derive an explicit expression for \(d_\phi K_{\phi_0}[\eta]\), which we do in Sections \ref{undersign}, where we justify differentiating under the integral sign, and \ref{dK}, where we manipulate the derivative expression into a suitable form. Finally, in Section \ref{backtoL}, we return to \(d_\phi A_{\phi_0}\) and use the previous results to derive an explicit formula, which we then apply to obtain the desired expression for \(d_\phi \Lambda_h^m(\lambda_{1}[\phi_0], \ldots, \lambda_{m}[\phi_0])\).

We emphasize that in Theorems \ref{dAphi} and \ref{dLambda}, we restrict our focus to the case where \(\lambda \neq 1/2\), as this allows for a simpler expression for the derivatives of \(A_\phi\) and \(\Lambda_h^m(\lambda_{1}[\phi], \ldots, \lambda_{m}[\phi])\) in terms of plasmonic eigenfunctions. On the other hand, the case where \(\lambda = 1/2\) is trivial, since \(1/2\) is always a simple NP-eigenvalue, regardless of the specific domain \(\Omega\), and its shape derivative is therefore 0.

\subsection{From $d_\phi \Lambda^m_h(\lambda_1[\phi_0],\ldots,\lambda_m[\phi_0])$ to $d_\phi K^*_{\phi_0}$}\label{LtoK}

As mentioned above, the first step is to show that computing the shape derivative of the symmetric functions \(\Lambda^m_h(\lambda_1[\phi], \ldots, \lambda_m[\phi])\) can be reduced to determining the differential \(d_\phi K^*_{\phi}\) of \(K^*_{\phi}\).  If $\phi\mapsto F_\phi$ (or $\phi\mapsto F[\phi]$) is a differentiable map from $\mathcal{A}^{1,\alpha}_{\partial\Omega}$ to a Banach space $X$, we denote by 
\[
d_\phi F_{\phi_0}.\theta \quad\text{(or $d_\phi F[\phi_0].\theta$)}
\]
the differential of $F$ at the point $\phi_0\in \mathcal{A}^{1,\alpha}_{\partial\Omega}$ applied to $\theta\in (C^{1,\alpha}(\partial\Omega))^n$. 

Let $\{\mu_1[\phi],\ldots,\mu_m[\phi]\}$ be the $(\langle \cdot,\cdot\rangle_{\sigma_n[\phi] S_\phi})$-orthonormal basis of Theorem \ref{main}, and let  
\begin{equation}\label{Aphi} 
{A}_\phi :=  \Big(\langle {K}^\ast_{\phi}\left[\mu_i[\phi]\right],\mu_j[\phi]\rangle_{\sigma_n[\phi] S_\phi}  \Big)_{i,j=1,\dots,m}
\end{equation}
be the $(m\times m)$ real matrix representing ${K}^\ast_{\phi}$ on the space spanned by ${\mu_1[\phi],\ldots,\mu_m[\phi]}$ (as described in the proof of Theorem \ref{main}). In the following Proposition \ref{der}, we observe that the $\phi$ differential of the matrix $A_\phi$ is obtained by taking the differential of ${K}^\ast_{\phi}$ in the definition of $A_\phi$, a result reminiscent of the Hellmann-Feynman theorem. In what follows, we employ the symbol $\mathbb{M}_m$ to denote the space of $(m\times m)$ real matrices.
\begin{proposition}\label{der} Under the assumptions and notations of Theorem \ref{main}, the differential of  
\[
\mathcal{U}\ni\phi\mapsto A_\phi\in \mathbb{M}_m
\] 
at $\phi_0$ is given by
\[
d_\phi A_{\phi_0}.\theta=\biggl(\langle \mu_i[\phi_0],d_\phi(K^\ast_{\phi_0})[\mu_j[\phi_0]].\theta\rangle_{\sigma_n[\phi_0] S_{\phi_0}}\biggr)_{i,j=1,\dots,m}
\] 
for all $\theta \in (C^{1,\alpha}(\partial\Omega))^n$, where $d_\phi(K^\ast_{\phi_0})[\mu_j[\phi_0]]$ represents the differential of 
\[
\mathcal{A}^{1,\alpha}_{\partial\Omega}\ni\phi\mapsto K^\ast_{\phi}[\mu_j[\phi_0]]\in C^{0,\alpha}(\partial\Omega)
\] 
at $\phi_0$.
\end{proposition}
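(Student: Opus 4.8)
The plan is to read the statement as a Hellmann--Feynman-type identity. When we differentiate the entries of $A_\phi$ at $\phi_0$, the Leibniz rule produces, besides the term coming from differentiating $K^\ast_\phi$, also terms coming from the $\phi$-dependence of the basis $\{\mu_1[\phi],\dots,\mu_m[\phi]\}$ and of the inner product $\langle\cdot,\cdot\rangle_{\sigma_n[\phi]S_\phi}$; the whole content of the proposition is that these extra terms cancel. The cancellation will be forced by three facts: (a) $\{\mu_j[\phi]\}$ is $\langle\cdot,\cdot\rangle_{\sigma_n[\phi]S_\phi}$-orthonormal for \emph{every} $\phi\in\mathcal{U}$ (Theorem \ref{main}(ii)); (b) $K^\ast_\phi$ is self-adjoint with respect to $\langle\cdot,\cdot\rangle_{\sigma_n[\phi]S_\phi}$, i.e.\ the identity \eqref{dueconti}; and (c) at the base point each $\mu_j[\phi_0]$ is a genuine eigenfunction of $K^\ast_{\phi_0}$ with eigenvalue $\lambda$, since $\lambda$ has multiplicity $m$ and hence the span of $\{\mu_1[\phi_0],\dots,\mu_m[\phi_0]\}$ coincides with $\ker(K^\ast_{\phi_0}-\lambda)$ (cf.\ Lemma \ref{lem:kerP} and Remark \ref{rem:kerP}).

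Concretely, I would first use \eqref{dueconti} to rewrite, for every $\phi\in\mathcal{U}$,
\[
(A_\phi)_{ij}=\langle K^\ast_\phi[\mu_i[\phi]],\mu_j[\phi]\rangle_{\sigma_n[\phi]S_\phi}=\langle \mu_i[\phi],K^\ast_\phi[\mu_j[\phi]]\rangle_{\sigma_n[\phi]S_\phi}\, ,
\]
this ``pre-symmetrization'' being what makes the indices of the final formula come out as in the statement. Next I would record that all the ingredients are real analytic (hence differentiable) in $\phi$ near $\phi_0$: the maps $\phi\mapsto\mu_j[\phi]\in C^{0,\alpha}(\partial\Omega)$ by Theorem \ref{main}(ii), the map $\phi\mapsto K^\ast_\phi\in\mathcal{L}(C^{0,\alpha}(\partial\Omega))$ by \cite[Theorem 3.2 (iii)]{DaLuMu22}, and the map $\phi\mapsto\sigma_n[\phi]S_\phi\in\mathcal{L}(C^{0,\alpha}(\partial\Omega))$ by \cite{LaRo04} and \cite[Theorem 3.2 (i)]{DaLuMu22}, so that $\phi\mapsto\langle\cdot,\cdot\rangle_{\sigma_n[\phi]S_\phi}$ is a real-analytic family of bounded bilinear forms on $C^{0,\alpha}(\partial\Omega)$. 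Since $(A_\phi)_{ij}$ is obtained from these data through the bounded multilinear assembly map $(u,v,L,b)\mapsto b(u,L[v])$, the Leibniz rule applies, and differentiating at $\phi_0$ in the direction $\theta$ yields four terms: one in which the bilinear form is differentiated, one in which $\mu_i$ is differentiated, one in which $\mu_j$ is differentiated (inside $K^\ast_{\phi_0}$), and one in which $K^\ast_\phi$ is differentiated --- the last being precisely $\langle\mu_i[\phi_0],d_\phi(K^\ast_{\phi_0})[\mu_j[\phi_0]].\theta\rangle_{\sigma_n[\phi_0]S_{\phi_0}}$.

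It remains to kill the other three terms. Writing $\dot\mu_k:=d_\phi\mu_k[\phi_0].\theta$ and using the eigenfunction relations $K^\ast_{\phi_0}[\mu_j[\phi_0]]=\lambda\mu_j[\phi_0]$ in the first two of these terms and the self-adjointness \eqref{dueconti} followed by $K^\ast_{\phi_0}[\mu_i[\phi_0]]=\lambda\mu_i[\phi_0]$ in the third, their sum becomes $\lambda$ times
\[
\big(d_\phi\langle\cdot,\cdot\rangle_{\sigma_n[\phi_0]S_{\phi_0}}.\theta\big)(\mu_i[\phi_0],\mu_j[\phi_0])+\langle\dot\mu_i,\mu_j[\phi_0]\rangle_{\sigma_n[\phi_0]S_{\phi_0}}+\langle\mu_i[\phi_0],\dot\mu_j\rangle_{\sigma_n[\phi_0]S_{\phi_0}}\, ,
\]
which is the differential at $\phi_0$ (evaluated at $\theta$) of the constant map $\phi\mapsto\langle\mu_i[\phi],\mu_j[\phi]\rangle_{\sigma_n[\phi]S_\phi}\equiv\delta_{ij}$, hence $0$. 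Therefore $(d_\phi A_{\phi_0}.\theta)_{ij}$ equals the remaining term, which is the asserted expression. I expect no genuine obstacle here: the algebra is short, and the only points demanding care are the justification of the term-by-term differentiation (checking that every map lands in the right Banach space and that the assembly map is bounded and multilinear, so that the product rule for Fr\'echet differentials applies) and the initial use of \eqref{dueconti} to put $(A_\phi)_{ij}$ in the form that makes the cancellation transparent.
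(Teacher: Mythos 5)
Your proposal is correct and follows essentially the same route as the paper: differentiate the orthonormality relation $\langle\mu_i[\phi],\mu_j[\phi]\rangle_{\sigma_n[\phi]S_\phi}\equiv\delta_{ij}$ to get a vanishing identity, use \eqref{dueconti} to move $K^\ast_\phi$ to the second slot, apply the Leibniz rule, and use the eigenfunction relation $K^\ast_{\phi_0}[\mu_k[\phi_0]]=\lambda\mu_k[\phi_0]$ (together with self-adjointness for the $\dot\mu_j$ term) to recognize the unwanted terms as $\lambda$ times that vanishing identity. The only cosmetic difference is that the paper splits the derivative of the weight $\sigma_n[\phi]S_\phi$ into its two factors, whereas you treat the bilinear form as a single real-analytic family.
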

\begin{proof} Since $\{\mu_1[\phi], \ldots, \mu_m[\phi]\}$ is orthonormal for the inner product $\langle \cdot,\cdot\rangle_{\sigma_n[\phi] S_\phi}$, we have $\langle \mu_i[\phi],\mu_j[\phi]\rangle_{\sigma_n[\phi] S_\phi}=0$ if $i\neq j$, and $\langle \mu_i[\phi],\mu_j[\phi]\rangle_{\sigma_n[\phi] S_\phi}=1$ if $i=j$, for all $\phi\in \mathcal{U}$. It follows that
\[
d_\phi\left(\langle \mu_i[\phi_0],\mu_j[\phi_0]\rangle_{\sigma_n[\phi_0] S_{\phi_0}}\right)=0
\]
for all $i,j=1,\ldots, m$. By the definition of the $\langle \cdot,\cdot\rangle_{\sigma_n[\phi] S_\phi}$ product, we have 
\[
\langle \mu_i[\phi],\mu_j[\phi]\rangle_{\sigma_n[\phi] S_{\phi}}=\langle \sigma_n[\phi]\,  S_{\phi}[\mu_i[\phi]],\mu_j[\phi]\rangle_{L^2(\partial\Omega)}\,,
\] 
and upon computing the differential of the right-hand side, we obtain
\begin{equation}\label{der.eq1}
\begin{split}
&\langle d_\phi(\sigma_n[\phi_0])\,  S_{\phi_0}[\mu_i[\phi_0]],\mu_j[\phi_0]\rangle_{L^2(\partial\Omega)}+\langle \sigma_n[\phi_0]\, d_\phi( S_{\phi_0})[\mu_i[\phi_0]],\mu_j[\phi_0]\rangle_{L^2(\partial\Omega)}\\
&+\langle \sigma_n[\phi_0]\,  S_{\phi_0}[d_\phi(\mu_i[\phi_0])],\mu_j[\phi_0]\rangle_{L^2(\partial\Omega)}+\langle \sigma_n[\phi_0]\,  S_{\phi_0}[\mu_i[\phi_0]],d_\phi(\mu_j[\phi_0])\rangle_{L^2(\partial\Omega)}=0\,.
\end{split}
\end{equation}
Here we have used the fact that $\mathcal{A}^{1,\alpha}_{\partial\Omega}\ni \phi\mapsto \sigma_n[\phi]\in C^{0,\alpha}(\partial\Omega)$ is differentiable (it is ideed real analytic, see \cite[Proposition 3.13]{LaRo04})

This equality will be useful in the following computation. By \eqref{dueconti}, we have 
\[
\langle K^\ast_{\phi}[\mu_i[\phi]],\mu_j[\phi]\rangle_{\sigma_n[\phi] S_{\phi}}=\langle \sigma_n[\phi]\,  S_{\phi}[\mu_i[\phi]],K^\ast_{\phi}[\mu_j[\phi]]\rangle_{L^2(\partial\Omega)}
\]
and differentiating the right-hand side, we obtain
\begin{equation}\label{der.eq2}
\begin{split}
&\langle d_\phi(\sigma_n[\phi_0])\,  S_{\phi_0}[\mu_i[\phi_0]],K^\ast_{\phi_0}[\mu_j[\phi_0]]\rangle_{L^2(\partial\Omega)}\\
&+\langle \sigma_n[\phi_0]\, d_\phi( S_{\phi_0})[\mu_i[\phi_0]],K^\ast_{\phi_0}[\mu_j[\phi_0]]\rangle_{L^2(\partial\Omega)}\\
&+\langle \sigma_n[\phi_0]\,  S_{\phi_0}[d_\phi(\mu_i[\phi_0])],K^\ast_{\phi_0}[\mu_j[\phi_0]\rangle_{L^2(\partial\Omega)}\\
&+\langle \sigma_n[\phi_0]\,  S_{\phi_0}[\mu_i[\phi_0]],d_\phi(K^\ast_{\phi_0})[\mu_j[\phi_0]]\rangle_{L^2(\partial\Omega)}\\
&+\langle \sigma_n[\phi_0]\,  S_{\phi_0}[\mu_i[\phi_0]],K^\ast_{\phi_0}[d_\phi(\mu_j[\phi_0])]\rangle_{L^2(\partial\Omega)}\,.
\end{split}
\end{equation}
The last term in the sum can be expressed as 
\[
\langle \mu_i[\phi_0],K^\ast_{\phi_0}[d_\phi(\mu_j[\phi_0])]\rangle_{\sigma_n[\phi_0] S_{\phi_0}}
\]
and using again \eqref{dueconti}, this becomes 
\[
\langle K^\ast_{\phi_0}[\mu_i[\phi_0]], d_\phi(\mu_j[\phi_0])\rangle_{\sigma_n[\phi_0] S_{\phi_0}}=\langle \sigma_n[\phi_0]\,  S_{\phi_0}[K^\ast_{\phi_0}[\mu_i[\phi_0]]],d_\phi(\mu_j[\phi_0])\rangle_{L^2(\partial\Omega)}\,.
\]
Recalling that $\mu_i[\phi_0]$ and $\mu_j[\phi_0]$ are eigenfunctions of $K^\ast_{\phi_0}$ with eigenvalue $\lambda$, we find that the expression in \eqref{der.eq2} is equal to
\[
\begin{split}
&\lambda\langle d_\phi(\sigma_n[\phi_0])\,  S_{\phi_0}[\mu_i[\phi_0]],\mu_j[\phi_0]\rangle_{L^2(\partial\Omega)}\\
&+\lambda\langle \sigma_n[\phi_0]\, d_\phi( S_{\phi_0})[\mu_i[\phi_0]],\mu_j[\phi_0]\rangle_{L^2(\partial\Omega)}\\
&+\lambda\langle \sigma_n[\phi_0]\,  S_{\phi_0}[d_\phi(\mu_i[\phi_0])],\mu_j[\phi_0]\rangle_{L^2(\partial\Omega)}\\
&+\langle \sigma_n[\phi_0]\,  S_{\phi_0}[\mu_i[\phi_0]],d_\phi(K^\ast_{\phi_0})[\mu_j[\phi_0]]\rangle_{L^2(\partial\Omega)}\\
&+\lambda\langle \sigma_n[\phi_0]\,  S_{\phi_0}[\mu_i[\phi_0]],d_\phi(\mu_j[\phi_0])\rangle_{L^2(\partial\Omega)}\,.
\end{split}
\]
By \eqref{der.eq1}, the sum of the terms multiplied by $\lambda$ cancels out, leaving us with 
\[
\langle \sigma_n[\phi_0]\,  S_{\phi_0}[\mu_i[\phi_0]],d_\phi(K^\ast_{\phi_0})[\mu_j[\phi_0]]\rangle_{L^2(\partial\Omega)}\,,
\] 
which can be rewritten as
\[
\langle \mu_i[\phi_0],d_\phi(K^\ast_{\phi_0})[\mu_j[\phi_0]]\rangle_{\sigma_n[\phi_0] S_{\phi_0}}\,.
\]
The proposition is thus proved.
\end{proof}

By equality \eqref{secular} and Proposition \ref{der}, the task of computing the derivatives of the symmetric functions of the eigenvalues boils down to determining the derivative of $K^\ast_{\phi}$. For instance, the symmetric function
\[
\Lambda^m_1(\lambda_1[\phi],\ldots,\lambda_m[\phi])=\lambda_1[\phi]+\dots+\lambda_m[\phi]
\]
is equivalent to the trace of the matrix $A_\phi$, yielding
\[
d_\phi\left(\Lambda^m_1(\lambda_1[\phi_0],\ldots,\lambda_m[\phi_0])\right)=\sum_{i=1}^m\langle \mu_i[\phi_0],d_\phi(K^\ast_{\phi_0})[\mu_i[\phi_0]]\rangle_{\sigma_n[\phi_0] S_{\phi_0}}\,.
\]
In Theorem \ref{dLambda}, we will provide an explicit expression for the first derivative of all symmetric functions $\Lambda^m_h(\lambda_1[\phi_0],\ldots,\lambda_m[\phi_0])$. Instead of directly computing the derivative of \(K^*_\phi\), we take a different approach by using the derivative of \(K_\phi\), the adjoint operator to \(K^*_\phi\). In the following section, we explain why this substitution is possible.

\subsection{From $d_\phi K^*_{\phi_0}$ to $d_\phi K_{\phi_0}$}\label{fromK*toK}

Without loss of generality, we set $ \phi_0=I $ (the identity mapping on $ \partial\Omega $), and for brevity, we write
\[
\mu_1\,,\ldots,\mu_m\,,
\]
instead of 
\[
\mu_1[\phi_0]\,,\ldots,\mu_m[\phi_0]\,.
\]    
Since for $ \phi_0=I $ we have $ \sigma_n[\phi_0]=1 $ and $ S_{\phi_0}= S_{\partial\Omega} $, the expression of Proposition \ref{der} for the differential of $ A_\phi $ can be written as
\[
d_\phi A_{\phi_0}.\theta=\left(\int_{\partial\Omega}  S_{\partial\Omega}[\mu_i]\left(d_\phi K^*_{\phi_0}[\mu_j].\theta\right)d\sigma\right)_{i,j=1,\ldots,m}
\]
for all $ \theta\in (C^{1,\alpha}(\partial\Omega))^n $.  It appears that the remaining task is to compute an explicit formula for $ d_\phi K^*_{\phi_0}[\mu_j].\theta $. However, it is actually more convenient to compute the derivative of the adjoint operator $ K_{\phi} $: 
\[
d_\phi(K_{\phi_0})[\mu_j].\theta.
\]
We will discuss the advantage of using $ K_{\phi_0} $ in the forthcoming Section \ref{undersign}. By the following lemma, we see that this will do the trick.

\begin{proposition}\label{switcheroo}
With the assumptions and notations of Theorem \ref{main} and $\phi_0=I$, we have
\[
\int_{\partial \Omega} S_{\partial \Omega}[\mu_i]\left(d_\phi(K^\ast_{\phi_0})[\mu_j].\theta\right) d\sigma = \int_{\partial \Omega}\left(d_\phi(K_{\phi_0})[S_{\partial \Omega}[\mu_i]].\theta\right)\mu_j \, d\sigma 
\]
for all $\theta\in (C^{1,\alpha}(\partial\Omega))^n$.
\end{proposition}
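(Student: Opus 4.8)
The plan is to differentiate the Calder\'on identity in the shape parameter $\phi$, evaluate at $\phi_0=I$, and use that $\mu_i=\mu_i[\phi_0]$ and $\mu_j=\mu_j[\phi_0]$ are eigenfunctions of $K^\ast_{\partial\Omega}$ for the \emph{same} eigenvalue $\lambda$. (Recall that, $\lambda$ having multiplicity $m$, Lemma \ref{lem:kerP} gives $V'(K^\ast_{\phi_0})=\ker(K^\ast_{\partial\Omega}-\lambda)$, so that $K^\ast_{\partial\Omega}[\mu_k]=\lambda\mu_k$ for $k=1,\dots,m$.) Since $\Omega[\phi]$ satisfies \eqref{Omega_def}, the Calder\'on identity of Section \ref{sec:potential} holds on $\phi(\partial\Omega)$, and pulling it back along $\phi$ gives the operator identity $K_\phi\circ S_\phi=S_\phi\circ K^\ast_\phi$ in $\mathcal{L}(C^{0,\alpha}(\partial\Omega))$, valid for every $\phi\in\mathcal{A}^{1,\alpha}_{\partial\Omega}$. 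By the real analyticity of the layer potential operators in Schauder spaces (\cite[Theorem 3.2]{DaLuMu22}; see also \cite{LaRo04}), the maps $\phi\mapsto K_\phi$, $\phi\mapsto S_\phi$, $\phi\mapsto K^\ast_\phi$ are differentiable, hence so are both sides of this operator identity; differentiating at $\phi_0=I$ (where $S_{\phi_0}=S_{\partial\Omega}$, $K_{\phi_0}=K_{\partial\Omega}$, $K^\ast_{\phi_0}=K^\ast_{\partial\Omega}$) and applying the Leibniz rule yields, for all $\theta\in(C^{1,\alpha}(\partial\Omega))^n$,
\begin{equation}\label{sw:dCald}
(d_\phi K_{\phi_0}.\theta)\circ S_{\partial\Omega}+K_{\partial\Omega}\circ(d_\phi S_{\phi_0}.\theta)=(d_\phi S_{\phi_0}.\theta)\circ K^\ast_{\partial\Omega}+S_{\partial\Omega}\circ(d_\phi K^\ast_{\phi_0}.\theta).
\end{equation}

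Next I would apply \eqref{sw:dCald} to $\mu_j$ and take the $L^2(\partial\Omega)$ inner product with $\mu_i$. By linearity and $K^\ast_{\partial\Omega}[\mu_j]=\lambda\mu_j$, the third summand of \eqref{sw:dCald} contributes $\lambda\langle(d_\phi S_{\phi_0}.\theta)[\mu_j],\mu_i\rangle_{L^2(\partial\Omega)}$; and, using that $K_{\partial\Omega}$ and $K^\ast_{\partial\Omega}$ are $L^2(\partial\Omega)$-adjoints together with $K^\ast_{\partial\Omega}[\mu_i]=\lambda\mu_i$, the second summand contributes $\langle(d_\phi S_{\phi_0}.\theta)[\mu_j],K^\ast_{\partial\Omega}[\mu_i]\rangle_{L^2(\partial\Omega)}=\lambda\langle(d_\phi S_{\phi_0}.\theta)[\mu_j],\mu_i\rangle_{L^2(\partial\Omega)}$. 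These two terms are equal and cancel. Moving $S_{\partial\Omega}$ onto $\mu_i$ by its $L^2(\partial\Omega)$-self-adjointness (all functions involved being real), what remains is
\begin{equation}\label{sw:almost}
\int_{\partial\Omega}S_{\partial\Omega}[\mu_i]\,(d_\phi(K^\ast_{\phi_0})[\mu_j].\theta)\,d\sigma=\int_{\partial\Omega}(d_\phi(K_{\phi_0})[S_{\partial\Omega}[\mu_j]].\theta)\,\mu_i\,d\sigma .
\end{equation}

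It remains to reconcile \eqref{sw:almost} with the claim, which differs from it by an interchange of $i$ and $j$ on the right-hand side. The left-hand side of \eqref{sw:almost} is exactly the $(i,j)$-entry of $d_\phi A_{\phi_0}.\theta$ — indeed, since $\sigma_n[\phi_0]=1$ and $S_{\phi_0}=S_{\partial\Omega}$, Proposition \ref{der} reads $d_\phi A_{\phi_0}.\theta=\big(\int_{\partial\Omega}S_{\partial\Omega}[\mu_i]\,(d_\phi(K^\ast_{\phi_0})[\mu_j].\theta)\,d\sigma\big)_{i,j=1,\dots,m}$. Moreover $A_\phi$ is symmetric: by \eqref{dueconti} the operator $K^\ast_\phi$ is self-adjoint for the real inner product $\langle\cdot,\cdot\rangle_{\sigma_n[\phi]S_\phi}$, with respect to which $\{\mu_1[\phi],\dots,\mu_m[\phi]\}$ is orthonormal, so $A_\phi$ — hence also $d_\phi A_{\phi_0}.\theta$ — is a symmetric matrix. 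Interchanging $i$ and $j$ in \eqref{sw:almost} and using this symmetry,
\[
\int_{\partial\Omega}S_{\partial\Omega}[\mu_i]\,(d_\phi(K^\ast_{\phi_0})[\mu_j].\theta)\,d\sigma=(d_\phi A_{\phi_0}.\theta)_{ij}=(d_\phi A_{\phi_0}.\theta)_{ji}=\int_{\partial\Omega}(d_\phi(K_{\phi_0})[S_{\partial\Omega}[\mu_i]].\theta)\,\mu_j\,d\sigma ,
\]
which is the stated equality.

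I expect the main obstacle to be exactly this last bookkeeping: the honest differentiation of the Calder\'on identity delivers \eqref{sw:almost}, i.e. the proposition with one pair of indices transposed, and it is only the symmetry of $d_\phi A_{\phi_0}.\theta$ — itself a manifestation of the symmetrizability of $K^\ast_\phi$ recorded in \eqref{dueconti} — that brings it into the stated form. A secondary delicate point is the cancellation of the two $(d_\phi S_{\phi_0}.\theta)$-terms, which rests on $\mu_i$ and $\mu_j$ solving $K^\ast_{\partial\Omega}[\,\cdot\,]=\lambda(\,\cdot\,)$ for one and the same $\lambda$; this is precisely why it matters that $V'(K^\ast_{\phi_0})$ coincides with the honest eigenspace $\ker(K^\ast_{\partial\Omega}-\lambda)$.
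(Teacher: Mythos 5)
Your proof is correct, but it takes a genuinely different route from the paper's. The paper differentiates the scalar duality identity $\int_{\partial\Omega}K^*_\phi[\mu]\,\eta\,\sigma_n[\phi]\,d\sigma=\int_{\partial\Omega}\mu\,K_\phi[\eta]\,\sigma_n[\phi]\,d\sigma$ at $\phi_0=I$; the nuisance terms there contain $d_\phi\sigma_n[\phi_0].\theta$ and are removed, for the specific choice $\eta=S_{\partial\Omega}[\mu_i]$, $\mu=\mu_j$, by the pointwise identity $K^*_{\partial\Omega}[\mu_j]\,S_{\partial\Omega}[\mu_i]=\mu_j\,K_{\partial\Omega}[S_{\partial\Omega}[\mu_i]]$ (Calder\'on identity \eqref{PSP} combined with the common eigenvalue $\lambda$), which lands directly on the stated formula with the indices already in the right places. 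You instead differentiate the pulled-back Calder\'on operator identity $K_\phi\circ S_\phi=S_\phi\circ K^*_\phi$; your nuisance terms contain $d_\phi S_{\phi_0}.\theta$ and are cancelled by the $L^2$-adjointness of $K_{\partial\Omega}$ and $K^*_{\partial\Omega}$ together with the common eigenvalue. Both cancellations are sound and use the same two ingredients in a different order. The one genuine divergence is that your computation inevitably yields the identity with $i$ and $j$ transposed on the right-hand side, so you need the additional symmetry of $d_\phi A_{\phi_0}.\theta$ -- which you justify correctly via Proposition \ref{der}, the self-adjointness \eqref{dueconti}, and the symmetry of the single-layer kernel, with no circularity since Proposition \ref{der} is independent of this statement -- to recover the stated index placement; the paper's route avoids that bookkeeping entirely. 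The trade-off is that you must invoke differentiability of $\phi\mapsto S_\phi$ and $\phi\mapsto K_\phi$ as operator-valued maps (available from \cite{LaRo04} and \cite{DaLuMu22}), while the paper only needs differentiability of the scalar density $\sigma_n[\phi]$ in addition to that of $K^*_\phi$ and $K_\phi$. Both arguments are complete.
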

\begin{proof}
First we show that, for all $\phi\in\mathcal{A}^{1,\alpha}_{\partial\Omega}$, $\mu \in C^{0,\alpha}(\partial \Omega)$ and $\eta \in C^{1,\alpha}(\partial \Omega)$, we have
\[
\begin{split}
&\int_{\partial \Omega} K^\ast_{\phi}[\mu]\eta \sigma_n[\phi] \, d\sigma=\int_{\phi(\partial \Omega)} K^\ast_{\phi(\partial \Omega)}[\mu\circ \phi^{-1}]\eta\circ \phi^{-1} \, d\sigma\\
&\qquad\qquad=\int_{\phi(\partial \Omega)} \mu\circ \phi^{-1} K_{\phi(\partial \Omega)}[ \eta\circ \phi^{-1}] \, d\sigma=\int_{\partial \Omega} \mu K_{\phi}[ \eta ]\sigma_n[\phi] \, d\sigma\, .
\end{split}
\]
Thus,
\[
d_\phi \left(\int_{\partial \Omega} K^\ast_{\phi}[\mu]\eta \sigma_n[\phi] \, d\sigma\right)_{|\phi=\phi_0} =d_\phi \left(\int_{\partial \Omega} \mu K_{\phi}[ \eta ] \sigma_n[\phi] \, d\sigma\right)_{|\phi=\phi_0} ,
\]
and consequently,
\[
\begin{split}
 & \int_{\partial \Omega}\left(d_\phi(K^*_{\phi_0})[\mu].\theta\right)\eta\, \sigma_n[\phi_0] + K^\ast_{\phi_0}[\mu]\eta \left(d_\phi\sigma_n[\phi_0].\theta\right)d\sigma\\
 &\qquad = \int_{\partial \Omega} \mu \left(d_\phi(K_{\phi_0})[ \eta ].\theta\right) \sigma_n[\phi_0]+ \mu K_{\phi_0}[ \eta ]  \left(d_\phi\sigma_n[\phi_0].\theta \right) d\sigma
\end{split}
\]
for all $\theta\in (C^{1,\alpha}(\partial\Omega))^n$. Here we have also used the fact that $\mathcal{A}^{1,\alpha}_{\partial\Omega}\ni \phi\mapsto \sigma_n[\phi]\in C^{0,\alpha}(\partial\Omega)$ is differentiable. Now, for $\phi_0=I$, we have $\sigma_n[\phi_0]=1$, $K^\ast_{\phi_0}=K^\ast_{\partial\Omega}$, and $K_{\phi_0}=K_{\partial\Omega}$, and the last equality yields
\begin{equation}\label{switcheroo.eq1}
\begin{split}
& \int_{\partial \Omega}\left(d_\phi(K^*_{\phi_0})[\mu].\theta\right)\eta \,d\sigma + \int_{\partial \Omega}K^\ast_{\partial\Omega}[\mu]\eta \left(d_\phi\sigma_n[\phi_0].\theta \right)  d\sigma\\
 &\qquad = \int_{\partial \Omega} \mu \left(d_\phi(K_{\phi_0})[ \eta ].\theta\right)d\sigma + \int_{\partial \Omega}\mu K_{\partial\Omega}[ \eta ]  \left(d_\phi\sigma_n[\phi_0].\theta \right) d\sigma \, .
\end{split}
\end{equation}
In general, $d_\phi\sigma_n[\phi_0].\theta$ is not constant (we shall see that this derivative is equal to the tangential divergence of $\theta$, cf.~\eqref{divT}). Therefore, the fact that $K^\ast_{\partial \Omega}$ and $K_{\partial \Omega}$ are adjoint does not guarantee that 
\[
\int_{\partial \Omega}K^\ast_{\partial\Omega}[\mu]\eta \left(d_\phi\sigma_n[\phi_0].\theta \right)  d\sigma = \int_{\partial \Omega}\mu K_{\partial\Omega}[ \eta ]  \left(d_\phi\sigma_n[\phi_0].\theta \right) d\sigma\, .
\]
 However, the last equality holds true when $\eta = S_{\partial \Omega}[\mu_i]$ and $\mu =\mu_j$. Indeed, since $K^\ast_{\partial \Omega}[\mu_i]=\lambda\mu_i$ and $K^\ast_{\partial \Omega}[\mu_j]=\lambda\mu_j$, we have
\[
K^\ast_{\partial \Omega}[\mu_j] S_{\partial \Omega}[\mu_i]=\lambda\mu_j S_{\partial \Omega}[\mu_i]=\mu_j  S_{\partial \Omega}[\lambda \mu_i]=\mu_j  S_{\partial \Omega}[K^\ast_{\partial\Omega}[\mu_i]]=\mu_j K_{\partial \Omega}[ S_{\partial\Omega}[\mu_i]]\, .
\]
Then, taking $\eta = S_{\partial \Omega}[\mu_i]$ and $\mu =\mu_j$ in \eqref{switcheroo.eq1} we can cancel out the second terms in the left and right-hand sides, and we obtain the desired equality in the proposition.
\end{proof}

\subsection{Taking the derivative under the integral sign}\label{undersign}
So, we are left with the task of computing the differential $d_\phi K_{\phi_0}[\eta].\theta$, with $\eta\in C^{1,\alpha}(\partial\Omega)$ and $\theta\in(C^{1,\alpha}(\partial\Omega))^n$. A classical argument shows that the function $\phi_t:=I+t\theta$ belongs to $\mathcal{A}^{1,\alpha}_{\partial\Omega}$ for $t$ sufficiently small (see, e.g., \cite[\S 5.2.2]{HePi18}). Then, we can write
\[
d_\phi K_{\phi_0}[\eta].\theta=\frac{d}{dt}K_{I+t\theta}[\eta]_{|t=0}.
\]
The advantage of handling $K_\phi$ instead of $K^*_\phi$ lies in the fact that, if the density function $\eta$ is identically equal to $1$ on $\partial\Omega$, then, by the third Green's identity, we have
\[
 D_{\phi_t(\partial\Omega)}[1](\xi)=1
\]
for all $\xi$ in the bounded domain $\Omega[\phi_t]$ with boundary $\phi_t(\partial\Omega)$ (for the third Green's identity refer, for example, to \cite[Corollary 4.6]{DaLaMu21}). By the jump formulas \eqref{jump} for the double-layer potential, it follows that
\[
K_{\phi_t(\partial\Omega)}[1](\xi)=\frac{1}{2}\quad\text{for all $\xi\in \phi_t(\partial\Omega)$,}
\]
and, setting $\xi=\phi_t(x)$, we deduce that
\[
K_{I+t\theta}[1](x)=K_{\phi_t(\partial\Omega)}[1]\circ\phi_t(x)=\frac{1}{2}\quad\text{for all $x\in \partial\Omega$.}
\]
Thus, for a fixed $x\in \partial\Omega$, we have
\[
K_{I+t\theta}[\eta(x)](x)=\frac{1}{2}\eta(x)
\]
and, consequently,
\[
\frac{d}{dt}\left(K_{I+t\theta}[\eta(x)]\right)_{|t=0}(x)=0\,.
\]
By linearity, it follows that
\begin{equation}\label{trattore}
\frac{d}{dt}\left(K_{I+t\theta}[\eta]\right)_{|t=0}(x)=\frac{d}{dt}\left(K_{I+t\theta}[\eta-\eta(x)]\right)_{|t=0}(x)\,.
\end{equation}
The right-hand side of \eqref{trattore} reads
\begin{equation}\label{ruspa}
\frac{d}{dt}\left(-\int_{\partial\Omega}(\eta(y)-\eta(x))\frac{(\phi_t(x)-\phi_t(y))\cdot{\nu_{\Omega[\phi_t]}(\phi_t(y))}}{s_n|\phi_t(x)-\phi_t(y)|^{n}}\sigma_n[\phi_t](y)\,d\sigma_y\right)_{|t=0}
\end{equation}
and we now aim to demonstrate that we can differentiate under the integral sign. The difference $(\eta(y)-\eta(x))$ will facilitate this task by providing a higher degree of integrability of the integrand function, an advantage that could not be leveraged using $K^*_\phi$.

Before proceeding to prove that we can differentiate under the integral sign and compute the derivative of $K_{I+t\theta}$, we introduce some notation. It will be convenient to consider the density $\eta$ in the space $C^{1,\alpha}_0(\mathbb{R}^n)$ of functions with compact support, rather than $C^{1,\alpha}(\partial\Omega)$. This substitution does not sacrifice generality because, as is well known, there exists a bounded extension operator from $C^{1,\alpha}(\partial\Omega)$ to $C^{1,\alpha}_0(\mathbb{B}_n(0,R))$, with $R>0$ such that $\overline\Omega\subset\mathbb{B}_n(0,R)$, and then we can extend by zero from $C^{1,\alpha}_0(\mathbb{B}_n(0,R))$ to $C^{1,\alpha}_0(\mathbb{R}^n)$ (see, e.g., Gilbarg and Trudinger \cite[Lemma 6.38]{GiTr83}). For $x\in\partial\Omega$, the tangential gradient $\nabla_{\partial\Omega}\eta(x)$ is defined by 
\begin{equation}\label{tangrad}
\nabla_{\partial\Omega}\eta(x):=\nabla \eta(x)-\nu_{\partial\Omega}(x)\otimes\nu_{\partial\Omega}(x)\nabla \eta(x)\,,
\end{equation}
where $\otimes$ denotes the tensor product. That is, if $v$ and $w$ are vectors of $\mathbb{R}^n$, then $v\otimes w$ is the $(n\times n)$-matrix defined as
\[
v\otimes w:=(v_i w_j)_{i,j=1,\dots,n}\,.
\]
It is well known that $\nabla_{\partial\Omega}\eta$ only depends on the restriction of $\eta$ to $\partial\Omega$.

Similarly, we will consider $\theta$ in $(C^{1,\alpha}_0(\mathbb{R}^n))^n$ rather than $(C^{1,\alpha}(\partial\Omega))^n$. Then, for sufficiently small $t$, we know that the map $I+t\theta$ is a $C^{1,\alpha}$ diffeomorphism from $\overline\Omega$ to its image $(I+t\theta)(\overline\Omega)$. We define:
\[
\Omega_t:=(I+t\theta)(\Omega)
\]
and
\[
x_t:=x+t\theta(x)\quad\text{for all $x\in\partial\Omega$ (or $x\in\overline\Omega$).}
\]
Then, we set:
\[
\theta_t(x_t):=\theta((I+t\theta)^{-1}(x_t))\left(=\theta(x)\right)\quad\text{for all $x_t\in\partial\Omega_t$ (or $x_t\in\overline\Omega_t$)}
\]
and observe that:
\begin{equation}\label{gradthetat}
\nabla\theta_t(x_t)=(1_n +t\nabla\theta(x))^{-1}\nabla\theta(x)\quad\text{for all $x_t\in\overline\Omega_t$,}
\end{equation}
where $1_n$ denotes the $n\times n$ identity matrix, and where we understand that $\nabla\theta$ is  the transpose of the Jacobian of $\theta$, i.e., $\nabla\theta:=(\partial_{x_i}\theta_j)_{i,j=1,\ldots,n}$ (and similarly for $\nabla\theta_t(x_t)$). For $x\in\partial\Omega$, the tangential  gradient matrix $\nabla_{\partial\Omega}\theta(x)$ is defined as:
\[
\nabla_{\partial\Omega}\theta(x):=\nabla \theta(x)-\nu_{\partial\Omega}(x)\otimes\nu_{\partial\Omega}(x)\nabla \theta(x)\,.
\]
We also set
\[
\mathrm{div}_{\partial\Omega}\,\theta:=\mathrm{trace}\,(\nabla_{\partial\Omega}\theta)\, .
\]

Regarding the outer unit normal $\nu_t$ to $\partial\Omega_t$, we have the following lemma:

\begin{lemma}\label{dtnu} We have
\[
\nu_t(y_t):=\nu_{(I+t\theta)(\Omega)}(y+t\theta(y))=\frac{(1_n +t\nabla \theta (y))^{-1}\nu_\Omega(y)}{|(1_n +t\nabla \theta (y))^{-1}\nu_\Omega(y)|}
\]
and
\[
\frac{d}{dt}\left(\nu_t(y_t)\right)=-(\nabla_{\partial\Omega_{t}}\theta_{t})(y_t)\nu_{t}(y_t)\quad\text{for all }y\in\partial\Omega\,.
\]
\end{lemma}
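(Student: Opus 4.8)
The plan is to first write down the explicit formula for $\nu_t(y_t)$, and then obtain the derivative by elementary calculus, using \eqref{gradthetat} and the definition of the tangential gradient.

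\emph{Establishing the formula.} Fix $y\in\partial\Omega$. For $t$ small, $\phi_t:=I+t\theta$ is a $C^{1,\alpha}$ diffeomorphism of a neighbourhood of $\overline\Omega$ onto its image; it carries $\partial\Omega$ onto $\partial\Omega_t$, and its differential at $y$ carries the tangent space $T_y\partial\Omega$ onto $T_{y_t}\partial\Omega_t$. In the paper's convention, where $\nabla\theta$ denotes the transpose of the Jacobian of $\theta$, this differential is the transpose of $1_n+t\nabla\theta(y)$. Hence a vector $u$ is orthogonal to $\partial\Omega_t$ at $y_t$ if and only if $(1_n+t\nabla\theta(y))u$ is parallel to $\nu_\Omega(y)$, i.e.\ if and only if $u$ is parallel to $(1_n+t\nabla\theta(y))^{-1}\nu_\Omega(y)$. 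Normalising, and observing that at $t=0$ this vector equals $\nu_\Omega(y)$ and that it depends continuously on $t$ without ever vanishing (so the orientation cannot jump), we obtain
\[
\nu_t(y_t)=\frac{(1_n+t\nabla\theta(y))^{-1}\nu_\Omega(y)}{|(1_n+t\nabla\theta(y))^{-1}\nu_\Omega(y)|}\,,
\]
which is the first assertion. (Alternatively one could invoke a standard reference on the transformation of the outward normal under a diffeomorphism.)

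\emph{Differentiation.} I would set $B(t):=1_n+t\nabla\theta(y)$, $A(t):=B(t)^{-1}$, and $v(t):=A(t)\nu_\Omega(y)$, so that $\nu_t(y_t)=v(t)/|v(t)|$. Differentiating $B(t)A(t)=1_n$ gives $\dot A(t)=-A(t)\,\nabla\theta(y)\,A(t)$, hence $\dot v(t)=-A(t)\,\nabla\theta(y)\,v(t)$. By \eqref{gradthetat} with $x=y$, we have $A(t)\,\nabla\theta(y)=(1_n+t\nabla\theta(y))^{-1}\nabla\theta(y)=\nabla\theta_t(y_t)$, and therefore
\[
\dot v(t)=-\,\nabla\theta_t(y_t)\,v(t)\,.
\]
Then, since $\tfrac{d}{dt}|v|=(v\cdot\dot v)/|v|$, the chain rule gives $\tfrac{d}{dt}\bigl(v/|v|\bigr)=|v|^{-1}\bigl(\dot v-|v|^{-2}(v\cdot\dot v)\,v\bigr)$; substituting the expression for $\dot v$ and using $v/|v|=\nu_t(y_t)$ yields
\[
\frac{d}{dt}\nu_t(y_t)=-\,\nabla\theta_t(y_t)\,\nu_t(y_t)+\bigl(\nu_t(y_t)\cdot\nabla\theta_t(y_t)\,\nu_t(y_t)\bigr)\,\nu_t(y_t)\,.
\]
Finally, by the definition of the tangential gradient (cf.~\eqref{tangrad}, applied on $\partial\Omega_t$ to the vector field $\theta_t$), one has $\nabla_{\partial\Omega_t}\theta_t=\nabla\theta_t-\nu_t\otimes\nu_t\,\nabla\theta_t$, so that
\[
-\,\nabla_{\partial\Omega_t}\theta_t(y_t)\,\nu_t(y_t)=-\,\nabla\theta_t(y_t)\,\nu_t(y_t)+\bigl(\nu_t(y_t)\cdot\nabla\theta_t(y_t)\,\nu_t(y_t)\bigr)\,\nu_t(y_t)\,,
\]
which coincides with the right-hand side above, proving the second assertion.

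The only delicate point is the bookkeeping of the transpose conventions in the first part: the paper's $\nabla\theta$ is the transpose of the Jacobian, so one must be careful about which factor carries the transpose both when pushing forward tangent vectors and when invoking \eqref{gradthetat}, and one must justify that the correct orientation is the $+$ sign. Everything else — the derivative of a matrix inverse and the chain rule for $v\mapsto v/|v|$ — is routine.
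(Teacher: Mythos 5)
Your proof is correct and the differentiation step is essentially identical to the paper's: both differentiate the matrix inverse, apply the quotient rule to $v/|v|$, and identify $(1_n+t\nabla\theta)^{-1}\nabla\theta$ with $\nabla\theta_t$ via \eqref{gradthetat} before recognizing the tangential gradient. The only difference is in the first formula, where the paper simply cites \cite[Lemma 3.3]{LaRo04} while you give a short self-contained derivation via the push-forward of tangent vectors; your transpose bookkeeping there is right, since the Jacobian is $(1_n+t\nabla\theta)^\intercal$ in the paper's convention.
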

\begin{proof} For the first equality we refer to \cite[Lemma 3.3]{LaRo04}. For the second equality, we observe that 
\[
\frac{d}{dt}(1_n +t\nabla\theta)^{-1}=-(1_n +t\nabla\theta)^{-1}(\nabla\theta)(1_n +t\nabla\theta)^{-1}\,,
\]
and then we compute
\[
\begin{split}
&\frac{d}{dt}(\nu_t(y_t))=-(1_n +t\nabla\theta)^{-1}(\nabla\theta)\frac{(1_n +t\nabla\theta)^{-1}\nu_\Omega}{|(1_n +t\nabla\theta)^{-1}\nu_\Omega|}\\
&+\frac{(1_n +t\nabla\theta)^{-1}\nu_\Omega}{|(1_n +t\nabla\theta)^{-1}\nu_\Omega|^2}\left(\frac{(1_n +t\nabla\theta)^{-1}\nu_\Omega}{|(1_n +t\nabla\theta)^{-1}\nu_\Omega|}\cdot(1_n +t\nabla\theta)^{-1}(\nabla\theta)(1_n +t\nabla\theta)^{-1}\nu_\Omega\right)\\
&=-\biggl((1_n +t\nabla\theta)^{-1}(\nabla\theta)-\nu_t\otimes\nu_t\,(1_n+t\nabla\theta)^{-1}(\nabla\theta)\biggr)\nu_t\\
&=-\biggl(\nabla\theta_t-\nu_t\otimes\nu_t\,\nabla\theta_t\biggr)\nu_t=-(\nabla_{\partial\Omega_t}\theta_t)\nu_t
\end{split}
\]
(see also \eqref{gradthetat}).
\end{proof}

Incidentally, we observe that $\left(\frac{d}{dt}\nu_t\right)_{|t=0}$ depends solely on the restriction of $\theta$ to $\partial\Omega$. We are now ready to prove the following proposition:

\begin{proposition}\label{DthetaK} Let $\theta\in (C^{1,\alpha}(\partial\Omega))^n$ and $\eta\in C^{1,\alpha}(\partial\Omega)$. We have
\[
\frac{d}{dt}\left(K_{I+t\theta}[\eta]\right)_{|t=0}(x)=-\int_{\partial\Omega}(\eta(y)-\eta(x))\frac{d}{dt}\left(\frac{(x_t-y_t)\cdot{\nu_t(y_t)}}{s_n|x_t-y_t|^{n}}\sigma_n[I+t\theta](y)\right)_{|t=0}\,d\sigma_y\\
\]
for all $x\in\partial\Omega$.
\end{proposition}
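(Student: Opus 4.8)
The plan is to take as starting point the identity already obtained in \eqref{trattore} together with the expression \eqref{ruspa} of its right-hand side, that is,
\[
\frac{d}{dt}\left(K_{I+t\theta}[\eta]\right)_{|t=0}(x)=\frac{d}{dt}\left(-\int_{\partial\Omega}(\eta(y)-\eta(x))\,k_t(x,y)\,\sigma_n[I+t\theta](y)\,d\sigma_y\right)_{|t=0},
\]
where we set $k_t(x,y):=\frac{(x_t-y_t)\cdot\nu_t(y_t)}{s_n|x_t-y_t|^{n}}$, and to prove that the outer $\frac{d}{dt}$ may be moved inside the integral. By a standard result on differentiation under the integral sign (a consequence of the dominated convergence theorem), it is enough to verify two things: first, that for $\sigma_y$-almost every $y\in\partial\Omega$ the map $t\mapsto(\eta(y)-\eta(x))\,k_t(x,y)\,\sigma_n[I+t\theta](y)$ is differentiable on a neighbourhood $(-t_0,t_0)$ of $0$; and second, that there exists $g\in L^1(\partial\Omega)$, possibly depending on $x$ but not on $t$, such that $\left|\,\frac{d}{dt}\bigl[(\eta(y)-\eta(x))\,k_t(x,y)\,\sigma_n[I+t\theta](y)\bigr]\,\right|\le g(y)$ for all $|t|<t_0$ and a.e.\ $y$.

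The differentiability requirement is immediate. For $t_0$ small enough the map $I+t\theta$ is injective on $\overline\Omega$, so $x_t\neq y_t$ whenever $x\neq y$, and then, using the explicit formula for $\nu_t(y_t)$ in Lemma~\ref{dtnu}, the map $t\mapsto k_t(x,y)$ is real analytic on $(-t_0,t_0)$ for every $y\in\partial\Omega\setminus\{x\}$; moreover $t\mapsto\sigma_n[I+t\theta]\in C^{0,\alpha}(\partial\Omega)$ is real analytic (cf.~\cite[Proposition 3.13]{LaRo04}), hence so is $t\mapsto\sigma_n[I+t\theta](y)$ for each fixed $y$. Since $\{x\}$ is $\sigma_y$-null, the first requirement holds.

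For the domination I would compute the $t$-derivative of the integrand by the product, quotient and chain rules, using Lemma~\ref{dtnu} for $\frac{d}{dt}\nu_t(y_t)$, identity \eqref{gradthetat}, and the differentiability of $\phi\mapsto\sigma_n[\phi]$. This produces a finite sum of terms, each of the form $(\eta(y)-\eta(x))$ times a quotient whose numerator is a bounded product of some of $x_t-y_t$, $\theta(x)-\theta(y)$, $\nu_t(y_t)$, $\frac{d}{dt}\nu_t(y_t)$, $\sigma_n[I+t\theta](y)$, $\frac{d}{dt}\sigma_n[I+t\theta](y)$ and whose denominator is a power $|x_t-y_t|^{k}$, the net degree of homogeneity in $x_t-y_t$ being in every term equal to $1-n$. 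The estimates needed are then elementary and uniform in $t$: since $x_t-y_t=(x-y)+t(\theta(x)-\theta(y))$ and $\theta$ is Lipschitz, one has $\tfrac12|x-y|\le|x_t-y_t|\le\tfrac32|x-y|$ for $|t|<t_0$ with $t_0$ small (so $I+t\theta$ is uniformly bi-Lipschitz near $0$); $|\eta(y)-\eta(x)|\le C|x-y|$ and $|\theta(x)-\theta(y)|\le C|x-y|$ because $\eta$ and $\theta$ are $C^{1,\alpha}$; $|\nu_t(y_t)|=1$; and $\frac{d}{dt}\nu_t(y_t)$, $\sigma_n[I+t\theta](y)$, $\frac{d}{dt}\sigma_n[I+t\theta](y)$ are bounded in $y$ uniformly for $|t|<t_0$, by the $t$-continuity of the corresponding maps into $C^{0,\alpha}(\partial\Omega)$ (for $\frac{d}{dt}\nu_t$, via the formula of Lemma~\ref{dtnu} and the uniform boundedness of $(1_n+t\nabla\theta)^{-1}$). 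Combining these with the Cauchy--Schwarz bound $|(x_t-y_t)\cdot\nu_t(y_t)|\le|x_t-y_t|$, each term is dominated by $C|x-y|^{2-n}$ with $C$ independent of $t\in(-t_0,t_0)$. Since on the compact $(n-1)$-dimensional manifold $\partial\Omega$ the surface measure of $\{\,y:|x-y|<r\,\}$ is $O(r^{n-1})$, we have $|x-y|^{2-n}\in L^1(\partial\Omega,d\sigma_y)$, and we may take $g(y)=C|x-y|^{2-n}$. The differentiation-under-the-integral theorem then applies and yields exactly the formula in the statement.

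The genuinely delicate point is the uniform-in-$t$ domination in the second requirement; everything there hinges on the cancellation already exploited in \eqref{trattore}, whereby the factor $\eta(y)-\eta(x)=O(|x-y|)$ supplies precisely the one extra power of $|x-y|$ that is lost when the double-layer kernel is differentiated in $t$, together with the elementary observation that $I+t\theta$ is uniformly bi-Lipschitz in a neighbourhood of $t=0$. The rest is bookkeeping of the derivative expansion.
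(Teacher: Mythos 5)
Your proposal is correct and follows essentially the same route as the paper's proof: differentiation under the integral sign justified by dominated convergence, with the key point being that the factor $(\eta(y)-\eta(x))=O(|x-y|)$ together with the uniform two-sided bound $|x_t-y_t|\asymp|x-y|$ for small $|t|$ yields a $t$-independent dominating function $C|x-y|^{2-n}\in L^1(\partial\Omega)$. The paper carries out the same bookkeeping explicitly, bounding $\frac{d}{dt}\bigl((x_t-y_t)\cdot\nu_t(y_t)\bigr)$ via Lemma~\ref{dtnu} and $\frac{d}{dt}|x_t-y_t|^{-n}$ separately, exactly as you sketch.
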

\begin{proof} 
To prove the proposition, we need to verify that we can apply the classical theorem on differentiation under the integral sign to the expression in \eqref{ruspa}. Accordingly, we have to verify that, for $t$ in a neighborhood of zero, both the functions 
\[
\partial\Omega\ni y\mapsto (\eta(y)-\eta(x))\frac{(x_t-y_t)\cdot{\nu_t(y_t)}}{s_n|x_t-y_t|^{n}}\sigma_n[I+t\theta](y)\in\mathbb{R}\,,
\]
and 
\begin{equation}\label{DthetaK.eq1}
\partial\Omega\ni y\mapsto \frac{d}{dt}\left[(\eta(y)-\eta(x))\frac{(x_t-y_t)\cdot{\nu_t(y_t)}}{s_n|x_t-y_t|^{n}}\sigma_n[I+t\theta](y)\right]\in\mathbb{R}
\end{equation}
are dominated by integrable functions in $L^1(\partial\Omega)$ that are independent of $t$ (cf.~Folland \cite[Theorem 2.27]{Fo95}). Notice that here we keep $x$ fixed, and we only consider the dependence on the variable $y$ and the parameter $t$. 

As mentioned in the comments before \eqref{tangrad} and \eqref{gradthetat}, we consider extensions of $\eta$ to $C^{1,\alpha}_0(\mathbb{R}^n)$ and of $\theta$ to $(C^{1,\alpha}_0(\mathbb{R}^n))^n$, which we still denote by $\eta$ and $\theta$, respectively. Then, by the mean value theorem, we have 
\[
 |\eta(y)-\eta(x)|\le \|\nabla\eta\|_\infty|x-y|
\]
and
\[
|\theta(x)-\theta(y)|\le \|\nabla\theta\|_\infty|x-y|
\]
for all $y\in\partial\Omega$, where we understand that $\|\nabla\eta\|_\infty:=\sup_{\xi\in\mathbb{R}^n}|\nabla\eta(\xi)|$  and  $\|\nabla\theta\|_\infty:=\sup_{\xi\in\mathbb{R}^n}(\sum_{j=1}^n|\nabla\theta_j(\xi)|^2)^{1/2}$. Therefore, we compute
\[
\begin{split}
\left|x_t-y_t\right|&=|(x-y)+t(\theta(x)-\theta(y))|\\
&\le|x-y|+|t||\theta(x)-\theta(y)|\le (1+\|\nabla\theta\|_\infty)|x-y|
\end{split}
\]
for all $y\in\partial\Omega$ and $t\in(-1,1)$, which implies that 
\[
\left|(x_t-y_t)\cdot{\nu_t(y_t)}\right|\le (1+\|\nabla\theta\|_\infty)|x-y|
\]
for all $y\in\partial\Omega$ and $t\in(-1,1)$. Moreover, we have
\begin{equation}\label{DthetaK.eq1.1}
|x_t-y_t|\ge |x-y|-|t|\,|\theta(x)-\theta(y)|\ge (1-|t|\|\nabla\theta\|_\infty)|x-y|\ge \frac{1}{2}|x-y|
\end{equation}
for $|t|\le (1/2)\|\nabla\theta\|_\infty^{-1}$. Since the map that takes $t$ in a neighborhood of zero to $\sigma_n[I+t\theta]$ is continuous (it is even real analytic, see \cite[Proposition 3.13]{LaRo04}), we conclude that 
\begin{equation}\label{DthetaK.eq2}
\left|(\eta(y)-\eta(x))\frac{(x_t-y_t)\cdot{\nu_t(y_t)}}{s_n|x_t-y_t|^{n}}\sigma_n[I+t\theta](y)\right|\le C{|x-y|^{2-n}}
\end{equation} 
for some $C>0$, uniformly for $t$ in a neighborhood of zero. 

To deal with the function in \eqref{DthetaK.eq1}, we use Lemma \ref{dtnu} to compute 
\[
\begin{split}
\frac{d}{dt}\bigl((x_t-y_t)\cdot \nu_t(y_t)\bigr)&=\frac{d}{dt}\bigl\{\bigl(x-y+t(\theta(x)-\theta(y))\bigr)\cdot\nu_t(y_t)\bigr\}\\
&=(\theta(x)-\theta(y))\cdot\nu_t(y_t)-(x_t-y_t)\cdot(\nabla_{\partial\Omega_t}\theta_t(y_t))\nu_t(y_t)\,.
\end{split}
\]
Using again inequalities $|\theta(x)-\theta(y)|\le \|\nabla\theta\|_\infty|x-y|$ and $|x_t-y_t|\le (1+\|\nabla\theta\|_\infty)|x-y|$, we verify that 
\begin{equation}\label{DthetaK.eq3}
\left|\frac{d}{dt}\bigl((x_t-y_t)\cdot \nu_t(y_t)\bigr)\right|\le 2+\|\nabla\theta\|_\infty)\|\nabla\theta\|_\infty|x-y|
\end{equation}
for all $y\in\partial\Omega$ and $t\in(-1,1)$. Then, we compute 
\begin{equation}\label{DthetaK.eq4}
\frac{d}{dt}\frac{1}{|x_t-y_t|^{n}}=-n\frac{(x_t-y_t)\cdot(\theta(x)-\theta(y))}{|x_t-y_t|^{n+2}}
\end{equation}
and, using \eqref{DthetaK.eq1.1}, we conclude that 
\begin{equation}\label{DthetaK.eq5}
\left|\frac{d}{dt}\frac{1}{|x_t-y_t|^{n}}\right|\le n 2^{n+2}(1+\|\nabla\theta\|_\infty)\|\nabla\theta\|_\infty\frac{1}{|x-y|^n}
\end{equation}
for all $y\in\partial\Omega\setminus\{x\}$ and $|t|\le (1/2)\|\nabla\theta\|_\infty^{-1}$. Putting together \eqref{DthetaK.eq3}, \eqref{DthetaK.eq5}, and the fact that the map $t\mapsto\sigma_n[I+t\theta]$ is real analytic close to zero, we conclude that 
\begin{equation}\label{DthetaK.eq6}
\left|\frac{d}{dt}\left((\eta(y)-\eta(x))\frac{(x_t-y_t)\cdot{\nu_t(y_t)}}{s_n|x_t-y_t|^{n}}\sigma_n[I+t\theta](y)\right)\right|\le C|x-y|^{2-n}
\end{equation}
for some $C>0$, uniformly for $t$ in a neighborhood of zero. Since $\partial\Omega\ni y\mapsto |x-y|^{2-n}\in\mathbb{R}$ is integrable on $\partial\Omega$, the proposition follows by \eqref{DthetaK.eq2} and \eqref{DthetaK.eq6}.
\end{proof}

Let us clarify that, in the proof of the previous Proposition \ref{DthetaK}, the presence of the difference $(\eta(y)-\eta(x))$ in the expression for $K_{I+t\theta}$ is extremely helpful, but not necessary. A more refined estimate of $(x_t-y_t)\cdot{\nu_t(y_t)}$ and its first and second derivatives may demonstrate that these are smaller than a constant times $|x-y|^{1+\alpha}$. Such a result would be sufficient to establish an analogue of Proposition \ref{DthetaK} where $\eta(y)$ replaces $(\eta(y)-\eta(x))$, and a similar result for $K^*_{I+t\theta}$ (cf.~Potthast \cite{Po94}). However, the presence of $(\eta(y)-\eta(x))$ will also be very helpful in the next section.

\subsection{Computing $\frac{d}{dt}(K_{I+t\theta})_{|t=0}$}\label{dK}
We now leverage Proposition \ref{DthetaK} to compute a convenient expression for $\frac{d}{dt}(K_{I+t\theta}[\eta])_{|t=0}$. We begin by recalling that 
\begin{equation}\label{divT}
\frac{d}{dt}{\sigma_n[I+t\theta]}_{|t=0}=\mathrm{div}_{\partial\Omega}\,\theta\, ,
\end{equation}
as proven, for example, in Henrot and Pierre \cite[p.~197]{HePi18} or Costabel and Le Lou\"er \cite[Lemma 4.2]{CoLe12a}.  Then, by Proposition \ref{DthetaK} and  by \eqref{DthetaK.eq4} and \eqref{divT}, we can verify that, for $x\in\partial\Omega$, 
\[
\begin{split}
&\frac{d}{dt}\left(K_{I+t\theta}[\eta]\right)_{|t=0}(x)\\
&\qquad={-}\int_{\partial\Omega}(\eta(y)-\eta(x)){\Bigl(\theta(x)-\theta(y)-(\nabla_{\partial\Omega}{\theta(y)})^\intercal(x-y)\Bigr)\cdot{\nu_\Omega(y)}}\frac{1}{s_n|x-y|^{n}}\,d\sigma_y\\
&\qquad\quad{+n}\int_{\partial\Omega}(\eta(y)-\eta(x))(x-y)\cdot{\nu_\Omega(y)}\frac{(x-y)\cdot(\theta(x)-\theta(y))}{s_n|x-y|^{n+2}}\,d\sigma_y\\
&\qquad\quad{-}\int_{\partial\Omega}(\eta(y)-\eta(x))\frac{(x-y)\cdot{\nu_\Omega(y)}}{s_n|x-y|^{n}}\,(\mathrm{div}_{\partial\Omega}\,\theta(y))\,d\sigma_y\,.
\end{split}
\]
By rearranging the terms on the right-hand side, and recalling that 
\[
\nabla^2 E(x-y)=-\frac{1}{s_n}\frac{1_n}{|x-y|^n}+\frac{n}{s_n}\frac{(x-y)\otimes (x-y)}{|x-y|^{n+2}}\,,
\]
where $1_n$ is the $n\times n$ identity matrix, we see that 
\[
\begin{split}
&\frac{d}{dt}\left(K_{I+t\theta}[\eta]\right)_{|t=0}(x)\\
&\qquad=\int^*_{\partial\Omega}(\eta(y)-\eta(x))\theta(x)^\intercal\nabla^2 E_n(x-y)\nu_\Omega(y)\,d\sigma_y\\
&\qquad\quad-\int_{\partial\Omega}^*(\eta(y)-\eta(x))\theta(y)^\intercal\nabla^2 E_n(x-y)\nu_\Omega(y)\,d\sigma_y\\
&\qquad\quad-\int^*_{\partial\Omega}(\eta(y)-\eta(x))\left[(\nabla_{\partial\Omega}\theta(y))^\intercal \nabla E_n(x-y)\right]\cdot\nu_\Omega(y)\,d\sigma_y\\
&\qquad\quad+\int_{\partial\Omega}(\eta(y)-\eta(x))(\mathrm{div}_{\partial\Omega}\theta)(y)\,\nu_\Omega(y)\cdot\nabla E_n(x-y)\,d\sigma_y\,,
\end{split}
\]
where $\int^*_{\partial\Omega}$ denotes the principal value integral (see \eqref{intstar} in the appendix). The last equality, combined with \eqref{thetangentialjump.eq1} and \eqref{thesistersact.eq1} in the appendix, yields 
\[
\begin{split}
&\frac{d}{dt}\left(K_{I+t\theta}[\eta]\right)_{|t=0}(x)\\
&\qquad=(\theta(x)\cdot\nu_\Omega(x))\;T_{\partial\Omega}[\eta](x)+\theta(x)\cdot\nabla_{\partial\Omega}K_{\partial\Omega}[\eta](x)\\
&\qquad\quad+\int_{\partial\Omega}^*\theta(y)\cdot\nu_\Omega(y)\;(\nabla_{\partial\Omega}\eta(y))\cdot \nabla E_n(x-y)\,d\sigma_y\\
&\qquad\quad-\int_{\partial\Omega}\theta(y)\cdot(\nabla_{\partial\Omega}\eta(y))\;\nu_\Omega(y)\cdot\nabla E_n(x-y)\,d\sigma_y\,,
\end{split}
\]
where $T_{\partial\Omega}$ is defined as in \eqref{Mr.T}.
We readily obtain the following 
\begin{proposition}\label{niceDthetaK} If $\theta\in (C^{1,\alpha}(\partial\Omega))^n$ and $\eta\in C^{1,\alpha}(\partial\Omega)$, then
\[
\begin{split}
&\frac{d}{dt}\left(K_{I+t\theta}[\eta]\right)_{|t=0}(x)\\
&\qquad= \int_{\partial\Omega}^*\theta(y)\cdot\nu_\Omega(y)\;(\nabla_{\partial\Omega}\eta(y))\cdot \nabla E_n(x-y)\,d\sigma_y\\
&\qquad\quad+(\theta(x)\cdot\nu_\Omega(x))\;T_{\partial\Omega}[\eta](x)+\theta(x)\cdot\nabla_{\partial\Omega}K_{\partial\Omega}[\eta](x)-K_{\partial\Omega}[\theta\cdot\nabla_{\partial\Omega}\eta](x)
\end{split}
\]
for all $x\in\partial\Omega$.
\end{proposition}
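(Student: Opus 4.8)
The plan is to start from the representation of $\frac{d}{dt}\bigl(K_{I+t\theta}[\eta]\bigr)_{|t=0}(x)$ provided by Proposition \ref{DthetaK}, which already legitimizes differentiating the kernel under the integral sign, and then to carry out that differentiation explicitly. Writing the kernel as the product of the three factors $(x_t-y_t)\cdot\nu_t(y_t)$, $|x_t-y_t|^{-n}$, and $\sigma_n[I+t\theta](y)$, I would apply the product rule at $t=0$: the derivative of the first factor follows from $\frac{d}{dt}(x_t-y_t)=\theta(x)-\theta(y)$ together with Lemma \ref{dtnu} (so that $\frac{d}{dt}\nu_t(y_t)_{|t=0}=-(\nabla_{\partial\Omega}\theta(y))\nu_\Omega(y)$), the derivative of the second factor is \eqref{DthetaK.eq4}, and the derivative of the third is \eqref{divT}, i.e.\ $\mathrm{div}_{\partial\Omega}\theta(y)$. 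Keeping the factor $\eta(y)-\eta(x)$ in front and collecting terms, this produces an expression made of three boundary integrals with kernels built from $\tfrac{(x-y)\cdot\nu_\Omega(y)}{|x-y|^{n}}$, $\tfrac{(x-y)\cdot\nu_\Omega(y)\,(x-y)\cdot(\theta(x)-\theta(y))}{|x-y|^{n+2}}$, and $\tfrac{\bigl[(\nabla_{\partial\Omega}\theta(y))^{\intercal}(x-y)\bigr]\cdot\nu_\Omega(y)}{|x-y|^{n}}$.

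The second step is a purely algebraic rearrangement using $\nabla^2 E_n(x-y)=-\tfrac{1}{s_n}\tfrac{1_n}{|x-y|^{n}}+\tfrac{n}{s_n}\tfrac{(x-y)\otimes(x-y)}{|x-y|^{n+2}}$ and $\nabla E_n(x-y)=-\tfrac{1}{s_n}\tfrac{x-y}{|x-y|^{n}}$. I would recombine the three integrals into four: the two ``Hessian'' integrals $\int^{*}_{\partial\Omega}(\eta(y)-\eta(x))\,\theta(x)^{\intercal}\nabla^2 E_n(x-y)\nu_\Omega(y)\,d\sigma_y$ and $-\int^{*}_{\partial\Omega}(\eta(y)-\eta(x))\,\theta(y)^{\intercal}\nabla^2 E_n(x-y)\nu_\Omega(y)\,d\sigma_y$, the ``$\nabla_{\partial\Omega}\theta$'' integral $-\int^{*}_{\partial\Omega}(\eta(y)-\eta(x))\bigl[(\nabla_{\partial\Omega}\theta(y))^{\intercal}\nabla E_n(x-y)\bigr]\cdot\nu_\Omega(y)\,d\sigma_y$, and the absolutely convergent ``divergence'' integral $\int_{\partial\Omega}(\eta(y)-\eta(x))(\mathrm{div}_{\partial\Omega}\theta)(y)\,\nu_\Omega(y)\cdot\nabla E_n(x-y)\,d\sigma_y$. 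Here one must be careful: once $|\eta(y)-\eta(x)|\lesssim|x-y|$ is used, the first three pieces have kernels of size $|x-y|^{1-n}$, which is only conditionally integrable on the $(n-1)$-dimensional surface $\partial\Omega$, so each must be read as a principal value integral; Proposition \ref{DthetaK} guarantees only that their \emph{sum} is absolutely integrable, and the splitting has to be justified by removing a small boundary neighbourhood of $x$, rearranging, and passing to the limit.

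The third step invokes the hypersingular-integral identities from the appendix. The identity \eqref{thetangentialjump.eq1} rewrites $\int^{*}_{\partial\Omega}(\eta(y)-\eta(x))\nabla^2 E_n(x-y)\nu_\Omega(y)\,d\sigma_y$ in terms of $T_{\partial\Omega}[\eta](x)\,\nu_\Omega(x)$ and $\nabla_{\partial\Omega}K_{\partial\Omega}[\eta](x)$, which is exactly what turns the $\theta(x)$-integral into $(\theta(x)\cdot\nu_\Omega(x))\,T_{\partial\Omega}[\eta](x)+\theta(x)\cdot\nabla_{\partial\Omega}K_{\partial\Omega}[\eta](x)$ (only the tangential part of $\theta(x)$ pairs nontrivially with the tangential vector $\nabla_{\partial\Omega}K_{\partial\Omega}[\eta](x)$), while the identity \eqref{thesistersact.eq1} treats the remaining $\theta(y)$-Hessian and $\nabla_{\partial\Omega}\theta$ integrals, reducing their combination to $\int^{*}_{\partial\Omega}\theta(y)\cdot\nu_\Omega(y)\,(\nabla_{\partial\Omega}\eta(y))\cdot\nabla E_n(x-y)\,d\sigma_y$ minus $\int_{\partial\Omega}\theta(y)\cdot(\nabla_{\partial\Omega}\eta(y))\,\nu_\Omega(y)\cdot\nabla E_n(x-y)\,d\sigma_y$. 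Recognizing this last integral as $K_{\partial\Omega}[\theta\cdot\nabla_{\partial\Omega}\eta](x)$ straight from the definition of $K_{\partial\Omega}$ then gives the asserted formula.

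I expect the principal value bookkeeping in Steps two and three to be the real obstacle: one must check that each individually divergent piece converges as a Cauchy principal value and that the appendix identities are applied to integrals of the matching singular type, so that no boundary contribution created by the excision procedure is silently dropped. By contrast, the differentiation in Step one and the final identification with $K_{\partial\Omega}[\theta\cdot\nabla_{\partial\Omega}\eta]$ are routine.
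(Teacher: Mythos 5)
Your proposal is correct and follows essentially the same route as the paper: differentiate under the integral sign via Proposition \ref{DthetaK}, use Lemma \ref{dtnu}, \eqref{DthetaK.eq4} and \eqref{divT} to evaluate the $t$-derivative of the kernel, regroup into the two Hessian principal-value integrals plus the $\nabla_{\partial\Omega}\theta$ and divergence terms via the formula for $\nabla^2 E_n$, and then apply \eqref{thetangentialjump.eq1} and \eqref{thesistersact.eq1}. The principal-value bookkeeping you rightly flag as the delicate point is precisely what the appendix (in particular Lemma \ref{theMshuffle} and Proposition \ref{thetangentialjump}) is there to justify.
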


\subsection{Back to $d_\phi \Lambda ^m_h(\lambda _1[\phi _0],\ldots ,\lambda _m[\phi _0])$}\label{backtoL}

We can now put together the results from the previous sections and return to the problem of computing the shape derivatives of the matrix $A_\phi$ and the symmetric functions $\Lambda^m_h(\lambda_1[\phi],\ldots ,\lambda_m[\phi])$. According to Propositions \ref{der} and \ref{switcheroo}, the differential of $A_\phi$ at $\phi_0=I$ (the identity of $\partial\Omega$) is given by 
\[
(d_\phi A_{\phi_0}.\theta)_{ij}= \int_{\partial \Omega}\left(d_\phi(K_{\phi_0})[S_{\partial \Omega}[\mu_i]].\theta\right)\mu_j \, d\sigma 
\]
for all $\theta\in (C^{1,\alpha}(\partial\Omega))^n$, where $i,j=1,\ldots,n$. Subsequently, by considering $\phi_t:=I+t\theta$, we find that 
\[
(d_\phi A_{\phi_0}.\theta)_{ij}=\Bigl(\frac{d}{dt}(A_{I+t\theta})_{|t=0}\Bigr)_{ij}=\int_{\partial \Omega}\frac{d}{dt}\left(K_{I+t\theta}[S_{\partial \Omega}[\mu_i]]\right)_{|t=0}\mu_j \, d\sigma\,,
\]
and by substituting the expression for $\frac{d}{dt}(K_{I+t\theta})_{|t=0}$ obtained in Proposition \ref{niceDthetaK}, we deduce that
\begin{equation}\label{eq:dtKIt}
\begin{split}
&\Bigl(\frac{d}{dt}(A_{I+t\theta})_{|t=0}\Bigr)_{ij}\\
& =\int_{\partial \Omega}\int_{\partial \Omega}^\ast \left(\theta(y)\cdot \nu_{\Omega}(y)\right)\bigg(\nabla_{\partial \Omega}S_{\partial \Omega}[\mu_i](y)\bigg)\cdot \bigg(\nabla_{\partial \Omega}E_n(x-y)\bigg)\, d\sigma_y\; \mu_j(x)\, d\sigma_x\\
&\qquad +\int_{\partial \Omega}(\theta\cdot \nu_{\Omega})\; T_{\partial \Omega}[S_{\partial \Omega}[\mu_i]]\;\mu_j\, d\sigma\\
&\qquad +\int_{\partial \Omega}\bigg(\theta_{\partial \Omega}\cdot \nabla_{\partial \Omega}K_{\partial \Omega}[S_{\partial \Omega}[\mu_i]]-K_{\partial \Omega}[\theta_{\partial \Omega}\cdot \nabla_{\partial \Omega}S_{\partial \Omega}[\mu_i]]  \bigg)\;\mu_j\, d\sigma\,,
\end{split}
\end{equation}
where $\theta_{\partial\Omega}:=\theta-\nu_\Omega\otimes\nu_\Omega\; \theta$ is the tangential component of $\theta$.
We now study the three terms on the right-hand side of \eqref{eq:dtKIt} one at a time. We begin with the first one. 
\begin{lemma}\label{first} We have
\[
\begin{split}
&\int_{\partial \Omega}\int_{\partial \Omega}^\ast \left(\theta(y)\cdot \nu_{\Omega}(y)\right)\bigg(\nabla_{\partial \Omega}S_{\partial \Omega}[\mu_i](y)\bigg)\cdot \bigg(\nabla_{\partial \Omega}E_n(x-y)\bigg)\, d\sigma_y \mu_j(x)\, d\sigma_x\\
&\qquad=-\int_{\partial \Omega} \left(\theta\cdot \nu_{\Omega}\right)\nabla_{\partial \Omega}S_{\partial \Omega}[\mu_i] \cdot \nabla_{\partial \Omega}S_{\partial \Omega}[ \mu_j]\, d\sigma\,.
\end{split}
\]
\end{lemma}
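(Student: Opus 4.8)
The plan is to exchange the order of the two integrations by Fubini's theorem and then to recognise the resulting inner integral as (minus) a tangential gradient of a single-layer potential, using the jump formula \eqref{nablasinglejump} for the gradient of the single layer together with the oddness of $\nabla E_n$.

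As a preliminary remark, since the vector $\nabla_{\partial\Omega}S_{\partial\Omega}[\mu_i](y)$ is tangent to $\partial\Omega$ at $y$, its scalar product with $\nabla_{\partial\Omega}E_n(x-y)$ coincides with its scalar product with the full gradient $\nabla E_n(x-y)$ (the tangential projection at $y$ is symmetric and acts as the identity on tangent vectors). Hence the left-hand side of the lemma equals
\[
\int_{\partial\Omega}\bigl(\theta(y)\cdot\nu_\Omega(y)\bigr)\,\nabla_{\partial\Omega}S_{\partial\Omega}[\mu_i](y)\cdot\Bigl(\int_{\partial\Omega}^{*}\mu_j(x)\,\nabla E_n(x-y)\,d\sigma_x\Bigr)\,d\sigma_y ,
\]
provided the interchange of integrations can be justified. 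I would do this by replacing the principal value with the family of truncated integrals over $\{y\in\partial\Omega:|x-y|>\varepsilon\}$: for each fixed $\varepsilon>0$ the integral so truncated is absolutely convergent on $\partial\Omega\times\partial\Omega$, so ordinary Fubini applies; moreover the truncated inner integrals are bounded uniformly in $\varepsilon$ and in $x$ (this is the standard Calder\'on--Zygmund bound on a $C^{1,\alpha}$ boundary, consistent with the fact that, by \eqref{nablasinglejump}, the principal value defines a boundary value of the gradient of a single-layer potential with $C^{0,\alpha}$ density). Two applications of the dominated convergence theorem then let me pass to the limit $\varepsilon\to0$ and obtain the iterated integral displayed above.

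For fixed $y\in\partial\Omega$ I would then evaluate the inner integral. Since $\nabla E_n$ is odd, $\nabla E_n(x-y)=-\nabla E_n(y-x)$, and applying \eqref{nablasinglejump} at the point $y$ with integration variable $x$ gives
\[
\int_{\partial\Omega}^{*}\mu_j(x)\,\nabla E_n(x-y)\,d\sigma_x=\pm\frac{\nu_\Omega(y)}{2}\,\mu_j(y)-\nabla S^{\pm}_{\partial\Omega}[\mu_j](y) .
\]
Taking the scalar product of the right-hand side with the tangential vector $\bigl(\theta(y)\cdot\nu_\Omega(y)\bigr)\nabla_{\partial\Omega}S_{\partial\Omega}[\mu_i](y)$ annihilates the term carrying $\nu_\Omega(y)$, and, since the tangential derivative of the single-layer potential has no jump across $\partial\Omega$, it replaces $\nabla S^{\pm}_{\partial\Omega}[\mu_j](y)$ by $\nabla_{\partial\Omega}S_{\partial\Omega}[\mu_j](y)$, so that the ambiguous sign disappears. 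Substituting back into the iterated integral yields
\[
-\int_{\partial\Omega}\bigl(\theta\cdot\nu_\Omega\bigr)\,\nabla_{\partial\Omega}S_{\partial\Omega}[\mu_i]\cdot\nabla_{\partial\Omega}S_{\partial\Omega}[\mu_j]\,d\sigma ,
\]
which is the claimed identity.

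The only genuinely delicate step is the interchange of the two integrations, because the inner integral exists only as a principal value and the kernel $\nabla E_n(x-y)$ is not absolutely integrable on $\partial\Omega\times\partial\Omega$; the truncation argument combined with dominated convergence, resting on the uniform boundedness of the truncated singular integrals, is how I would handle it. The remaining manipulations are routine bookkeeping with the jump relations recalled in Section \ref{sec:potential}.
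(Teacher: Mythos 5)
Your argument is correct and follows essentially the same route as the paper's: interchange the two integrations (the paper cites Mikhlin's uniform-limit argument where you use truncation plus dominated convergence), then use the oddness of $\nabla E_n$ together with the jump formula \eqref{nablasinglejump} and the absence of a jump in the tangential derivative of the single layer to identify the inner integral with $-\nabla_{\partial\Omega}S_{\partial\Omega}[\mu_j](y)$ against the tangential factor. The sign bookkeeping and the elimination of the $\nu_\Omega(y)$ term are handled correctly.
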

\begin{proof}
By a standard argument, which utilizes the theorem on taking uniform limits under the integral sign (see, for example, Mikhlin \cite[Section 9]{Mi65}), we can switch the integrals with respect to $x$ and $y$ and obtain
\[
\begin{split}
&\int_{\partial \Omega}\int_{\partial \Omega}^\ast \theta(y)\cdot \nu_{\Omega}(y)\bigg(\nabla_{\partial \Omega}S_{\partial \Omega}[\mu_i](y)\bigg)\cdot \bigg(\nabla_{\partial \Omega}E_n(x-y)\bigg)\, d\sigma_y\; \mu_j(x)\, d\sigma_x\\
&=-\int_{\partial \Omega} \theta(y)\cdot \nu_{\Omega}(y)\nabla_{\partial \Omega}S_{\partial \Omega}[\mu_i](y) \cdot \int^*_{\partial \Omega}\bigg(\nabla_{\partial \Omega}E_n(y-x)\bigg)\mu_j(x)\, d\sigma_x\, d\sigma_y.
\end{split}
\]
Note that the minus sign in front of the right-hand side appears due to the substitution of $\nabla_{\partial \Omega}E_n(x-y)$ with $\nabla_{\partial \Omega}E_n(y-x)$. 
By the jump properties of the derivatives of the single-layer potential \eqref{nablasinglejump}, we have
\[
\int^*_{\partial \Omega}\bigg(\nabla_{\partial \Omega}E_n(y-x)\bigg)\mu_j(x)\, d\sigma_x=\nabla_{\partial \Omega}S_{\partial \Omega}[\mu_j](y)\, ,
\]
and the lemma follows.
\end{proof}

It's now the turn of the second term in the right-hand side of \eqref{eq:dtKIt}. 

\begin{lemma}\label{second} If $\lambda\neq1/2$, then we have
\begin{equation}\label{second.eq1}
\begin{split}
&\int_{\partial \Omega}(\theta\cdot \nu_{\Omega})\; T_{\partial \Omega}[S_{\partial \Omega}[\mu_i]]\;\mu_j\, d\sigma\\
&\qquad=\frac{\frac{1}{2}+\lambda}{\frac{1}{2}-\lambda}\int_{\partial \Omega}  ( \theta \cdot \nu_{\Omega}) \left(\nu_{\Omega} \cdot \nabla S^+_{\partial \Omega}[\mu_i]\right)\left(\nu_{\Omega} \cdot \nabla S^+_{\partial \Omega}[\mu_j]\right)\, d\sigma \, .
\end{split}
\end{equation}
\end{lemma}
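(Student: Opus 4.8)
The plan is to reduce the whole statement to an explicit pointwise identity for $T_{\partial\Omega}[S_{\partial\Omega}[\mu_i]]$, and then to integrate against $(\theta\cdot\nu_\Omega)\mu_j$. Recall that, under the conventions of Section~\ref{fromK*toK} (so $\phi_0=I$), the functions $\mu_i,\mu_j$ are eigenfunctions of $K^*_{\partial\Omega}$ with eigenvalue $\lambda$, and by Lemma~\ref{lem:reg} they belong to $C^{0,\alpha}(\partial\Omega)$; hence $S^{\pm}_{\partial\Omega}[\mu_i]\in C^{1,\alpha}$ and the jump formula \eqref{nusinglejump} for the normal derivative of the single layer applies, giving
\[
\nu_\Omega\cdot\nabla S^{+}_{\partial\Omega}[\mu_i]=\tfrac12\mu_i-K^*_{\partial\Omega}[\mu_i]=\Big(\tfrac12-\lambda\Big)\mu_i\qquad\text{on }\partial\Omega,
\]
and likewise with $j$ in place of $i$. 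Since $\lambda\neq1/2$, this relation is invertible: $\mu_j=\big(\tfrac12-\lambda\big)^{-1}\,\nu_\Omega\cdot\nabla S^{+}_{\partial\Omega}[\mu_j]$, which is exactly the factor appearing on the right-hand side of \eqref{second.eq1}.

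Next I would identify $T_{\partial\Omega}[S_{\partial\Omega}[\mu_i]]$ by applying the third Green identity (\cite[Theorem~6.9]{DaLaMu21}) to the function $u:=S^{+}_{\partial\Omega}[\mu_i]$, which is harmonic in $\Omega$ and of class $C^{1,\alpha}(\overline\Omega)$: in $\Omega$ one has
\[
D_{\partial\Omega}\big[S_{\partial\Omega}[\mu_i]_{|\partial\Omega}\big]+S_{\partial\Omega}\big[\nu_\Omega\cdot\nabla S^{+}_{\partial\Omega}[\mu_i]_{|\partial\Omega}\big]=S^{+}_{\partial\Omega}[\mu_i]\,.
\]
Inserting the jump relation above, the second summand equals $\big(\tfrac12-\lambda\big)S^{+}_{\partial\Omega}[\mu_i]$, and therefore $D^{+}_{\partial\Omega}[S_{\partial\Omega}[\mu_i]]=\big(\tfrac12+\lambda\big)\,S^{+}_{\partial\Omega}[\mu_i]$ throughout $\Omega$. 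Since both sides are in $C^{1,\alpha}(\overline\Omega)$, we may take the normal derivative and its boundary trace; recalling the definition \eqref{Mr.T} of $T_{\partial\Omega}$ this yields
\[
T_{\partial\Omega}[S_{\partial\Omega}[\mu_i]]=\Big(\tfrac12+\lambda\Big)\,\nu_\Omega\cdot\nabla S^{+}_{\partial\Omega}[\mu_i]=\Big(\tfrac14-\lambda^2\Big)\mu_i\qquad\text{on }\partial\Omega.
\]
(The same identity can alternatively be obtained from the exterior version of the third Green identity together with the Calder\'on identity \eqref{PSP} and the jump formula \eqref{nablasinglejump}; in that form the factor $\epsilon=\tfrac{1/2+\lambda}{1/2-\lambda}$ is visibly the transmission coefficient of the plasmonic problem \eqref{plasmonic}, which is why the final formula reads most naturally in terms of the plasmonic eigenfunctions $S^{+}_{\partial\Omega}[\mu_i]$.)

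Finally I would combine the two displays. Using $\mu_j=\big(\tfrac12-\lambda\big)^{-1}\nu_\Omega\cdot\nabla S^{+}_{\partial\Omega}[\mu_j]$,
\[
T_{\partial\Omega}[S_{\partial\Omega}[\mu_i]]\,\mu_j=\frac{\tfrac12+\lambda}{\tfrac12-\lambda}\,\big(\nu_\Omega\cdot\nabla S^{+}_{\partial\Omega}[\mu_i]\big)\big(\nu_\Omega\cdot\nabla S^{+}_{\partial\Omega}[\mu_j]\big)\qquad\text{on }\partial\Omega,
\]
and multiplying by $\theta\cdot\nu_\Omega$ and integrating over $\partial\Omega$ gives precisely \eqref{second.eq1}. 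There is no serious analytic obstacle here: the only delicate points are bookkeeping, namely keeping track of the (nonstandard) sign convention for $S_{\partial\Omega}$ when invoking the third Green identity, checking that the identity $D^{+}_{\partial\Omega}[S_{\partial\Omega}[\mu_i]]=\big(\tfrac12+\lambda\big)S^{+}_{\partial\Omega}[\mu_i]$ may legitimately be differentiated up to the boundary (which holds because both sides are harmonic in $\Omega$ and of class $C^{1,\alpha}(\overline\Omega)$), and using the hypothesis $\lambda\neq1/2$ exactly once, to divide by $\tfrac12-\lambda$.
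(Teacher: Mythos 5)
Your proof is correct, but it takes a genuinely different route from the paper's. The paper never computes $T_{\partial\Omega}[S_{\partial\Omega}[\mu_i]]$ pointwise: it works in weak form, invoking the symmetry identity $\int_{\partial\Omega} f\,T_{\partial\Omega}[g]\,d\sigma=\int_{\partial\Omega} T_{\partial\Omega}[f]\,g\,d\sigma$ (McLean), approximating the merely $C^{0,\alpha}$ test function $(\theta\cdot\nu_\Omega)\mu_j$ by a sequence $\eta_k\in C^{1,\alpha}(\partial\Omega)$, and then chaining the second Green identity, the jump formulas, and the adjointness of $\tfrac12 I+K_{\partial\Omega}$ and $\tfrac12 I+K^*_{\partial\Omega}$ to arrive at $\int\eta_k\,T_{\partial\Omega}[S_{\partial\Omega}[\mu_i]]\,d\sigma=(\tfrac12-\lambda)(\tfrac12+\lambda)\int\eta_k\,\mu_i\,d\sigma$ before passing to the limit. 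You instead apply the \emph{interior} third Green identity to $u=S^{+}_{\partial\Omega}[\mu_i]$ (the same device the paper uses in its proof of the Calder\'on identity, but at exterior points), obtaining $D^{+}_{\partial\Omega}[S_{\partial\Omega}[\mu_i]]=(\tfrac12+\lambda)S^{+}_{\partial\Omega}[\mu_i]$ in $\Omega$ and hence the pointwise identity $T_{\partial\Omega}[S_{\partial\Omega}[\mu_i]]=(\tfrac14-\lambda^2)\mu_i$ on $\partial\Omega$; after that the lemma is an algebraic rewriting using $\nu_\Omega\cdot\nabla S^{+}_{\partial\Omega}[\mu_j]=(\tfrac12-\lambda)\mu_j$. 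Your route is shorter and entirely bypasses the density-plus-continuity step and the symmetry of $T_{\partial\Omega}$; the regularity bookkeeping you flag (both sides of the Green-identity consequence lie in $C^{1,\alpha}(\overline\Omega)$ because $\mu_i\in C^{0,\alpha}(\partial\Omega)$ forces $S_{\partial\Omega}[\mu_i]_{|\partial\Omega}\in C^{1,\alpha}(\partial\Omega)$, and the interior third Green identity carries a $+u(x)$ with the paper's sign conventions) is exactly the right set of checks and all of them go through. The paper's weak-form argument has the mild advantage of exhibiting the mechanism (self-adjointness of $T_{\partial\Omega}$ plus the eigenvalue equation applied twice) that would survive if one only knew $\mu_i$ weakly, but for the statement at hand your pointwise identity is the cleaner intermediate fact.
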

\begin{proof} We recall the identity
\begin{equation}\label{second.eq2}
\int_{\partial \Omega}f\; T_{\partial \Omega}[g]\, d\sigma=\int_{\partial \Omega}T_{\partial \Omega}[f]\; g\, d\sigma\, ,
\end{equation}
valid for all functions $f,g\in C^{1,\alpha}(\partial\Omega)$. This identity can be established using an argument based on the first Green's identity. For a proof, refer to McLean \cite[Theorem 8.21]{Mc00}. We aim to apply \eqref{second.eq2} to the expression on the left-hand side of \eqref{second.eq1}. However, while $S_{\partial \Omega}[\mu_i]$ belongs to $C^{1,\alpha}(\partial\Omega)$, the function $\partial\Omega\ni x\mapsto (\theta(x)\cdot\nu(x))\mu_j(x)\in\mathbb{R}$ is only of class $C^{0,\alpha}(\partial\Omega)$. Therefore, we introduce a sequence $\{\eta_k\}_{k=1}^\infty$ in $C^{1,\alpha}(\partial\Omega)$ converging to $(\theta\cdot\nu)\mu_j$ in $C^{0,\alpha}(\partial\Omega)$. Applying \eqref{second.eq2}, we have
\[
\int_{\partial \Omega}\eta_k\; T_{\partial \Omega}[S_{\partial \Omega}[\mu_i]]\, d\sigma=\int_{\partial \Omega}T_{\partial \Omega}[\eta_k]\;S_{\partial \Omega}[\mu_i]\, d\sigma\,.
\]
Using the second Green's identity in $\Omega$, the last integral is equal to 
\[
\int_{\partial \Omega}   D^+_{\partial \Omega}[\eta_k] \bigg(\nu_{\Omega}\cdot\nabla S^+_{\partial \Omega}[\mu_i]\bigg)\, d\sigma\,. 
\]
By the jump properties of the double and single-layer potentials \eqref{jump} and \eqref{nusinglejump}, this expression reads
\[
\int_{\partial \Omega}   \Bigl(\frac{1}{2}I+K_{\partial \Omega}\Bigr)[\eta_k]  \Bigl(\frac{1}{2}I-K_{\partial \Omega}^\ast\Bigr)[\mu_i]\, d\sigma\,.
\]
As $\mu_i$ is an eigenfunction of $K_{\partial \Omega}^\ast$ with eigenvalue $\lambda$, this becomes
\[
\Bigl(\frac{1}{2}-\lambda\Bigr)\int_{\partial \Omega}   \Bigl(\frac{1}{2}I+K_{\partial \Omega}\Bigr)[\eta_k]\; \mu_i \, d\sigma\,,
\]
and, since $\frac{1}{2}I+K_{\partial \Omega}$ and $\frac{1}{2}I+K_{\partial \Omega}^*$ are adjoint, we obtain
\[
\Bigl(\frac{1}{2}-\lambda\Bigr)\int_{\partial \Omega}\eta_k\;\Bigl(\frac{1}{2}I+K_{\partial \Omega}^\ast\Bigr)[\mu_i] \, d\sigma\,.
\]
Using again the fact that  $\mu_i$ is an eigenfunction of $K_{\partial \Omega}^\ast$, we find that the last integral equals
\[
\Bigl(\frac{1}{2}-\lambda\Bigr)\Bigl(\frac{1}{2}+\lambda\Bigr)\int_{\partial \Omega}   \eta_k\,\mu_j  \, d\sigma\,.
\]
Therefore, we have established that 
\[
\int_{\partial \Omega}\eta_k\; T_{\partial \Omega}[S_{\partial \Omega}[\mu_i]]\, d\sigma=\Bigl(\frac{1}{2}-\lambda\Bigr)\Bigl(\frac{1}{2}+\lambda\Bigr)\int_{\partial \Omega}   \eta_k\,\mu_j  \, d\sigma\,,
\]
and taking the limit as $k\to\infty$, we conclude that 
\[
\int_{\partial \Omega}(\theta\cdot \nu_{\Omega})\; \nu_{\Omega}\cdot \nabla D^{\pm}_{\partial \Omega}[S_{\partial \Omega}[\mu_i]]\;\mu_j\, d\sigma=\Bigl(\frac{1}{2}-\lambda\Bigr)\Bigl(\frac{1}{2}+\lambda\Bigr)\int_{\partial \Omega}   \left(\theta \cdot \nu_{\Omega}\right) \mu_i\mu_j  \, d\sigma\,.
\]
Since both $\mu_i$ and $\mu_j$ are eigenfunctions of $K_{\partial \Omega}^\ast$ for the same eigenvalue $\lambda$, which we are assuming to be different from $1/2$, the expression on the right-hand side is equal to
\[
\frac{(\frac{1}{2}-\lambda)(\frac{1}{2}+\lambda)}{(\frac{1}{2}-\lambda)^2}\int_{\partial \Omega}   \left(\theta \cdot \nu_{\Omega}\right) \Big(\frac{1}{2}I-K_{\partial \Omega}^\ast\Big)[\mu_i]\;\Big(\frac{1}{2}I-K_{\partial \Omega}^\ast\Big)[\mu_j]  \, d\sigma\,
\]
and, by the jump properties of the single-layer potential, we establish the validity of the equality in the lemma.
\end{proof}

Finally, for the third term on the right-hand side of \eqref{eq:dtKIt}, we have the following lemma.
\begin{lemma}\label{third}
We have
\[
\int_{\partial \Omega}\bigg(\theta_{\partial \Omega}\cdot \nabla_{\partial \Omega}K_{\partial \Omega}[S_{\partial \Omega}[\mu_i]]-K_{\partial \Omega}[\theta_{\partial \Omega}\cdot \nabla_{\partial \Omega}S_{\partial \Omega}[\mu_i]]  \bigg)\;\mu_j\, d\sigma=0\,.
\]
\end{lemma}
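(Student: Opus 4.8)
The plan is to collapse the whole integrand to a single expression by means of the Calder\'on identity \eqref{PSP} together with the eigenfunction relations $K^\ast_{\partial\Omega}[\mu_i]=\lambda\mu_i$ and $K^\ast_{\partial\Omega}[\mu_j]=\lambda\mu_j$. For the first summand I would observe that, by \eqref{PSP},
\[
K_{\partial\Omega}\bigl[S_{\partial\Omega}[\mu_i]\bigr]=S_{\partial\Omega}\bigl[K^\ast_{\partial\Omega}[\mu_i]\bigr]=\lambda\,S_{\partial\Omega}[\mu_i]\qquad\text{on }\partial\Omega,
\]
an equality between functions of $C^{1,\alpha}(\partial\Omega)$. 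Since the tangential gradient depends only on the restriction to $\partial\Omega$, this gives $\theta_{\partial\Omega}\cdot\nabla_{\partial\Omega}K_{\partial\Omega}[S_{\partial\Omega}[\mu_i]]=\lambda\,\theta_{\partial\Omega}\cdot\nabla_{\partial\Omega}S_{\partial\Omega}[\mu_i]$, so the first summand contributes exactly $\lambda\int_{\partial\Omega}(\theta_{\partial\Omega}\cdot\nabla_{\partial\Omega}S_{\partial\Omega}[\mu_i])\,\mu_j\,d\sigma$.

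For the second summand I would set $g:=\theta_{\partial\Omega}\cdot\nabla_{\partial\Omega}S_{\partial\Omega}[\mu_i]$, which belongs to $C^{0,\alpha}(\partial\Omega)\subseteq L^2(\partial\Omega)$ because $S_{\partial\Omega}[\mu_i]\in C^{1,\alpha}(\partial\Omega)$ (recall that $\mu_i\in C^{0,\alpha}(\partial\Omega)$ by Lemma \ref{lem:reg}) and $\theta_{\partial\Omega}\in C^{0,\alpha}(\partial\Omega)$. Using that $K_{\partial\Omega}$ and $K^\ast_{\partial\Omega}$ are adjoint on $L^2(\partial\Omega)$ and that $\mu_j$ is an eigenfunction, one has $\int_{\partial\Omega}K_{\partial\Omega}[g]\,\mu_j\,d\sigma=\int_{\partial\Omega}g\,K^\ast_{\partial\Omega}[\mu_j]\,d\sigma=\lambda\int_{\partial\Omega}g\,\mu_j\,d\sigma$.

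Subtracting the two contributions, the term $\lambda\int_{\partial\Omega}(\theta_{\partial\Omega}\cdot\nabla_{\partial\Omega}S_{\partial\Omega}[\mu_i])\,\mu_j\,d\sigma$ appears with opposite signs and cancels, which yields the claimed identity. There is no genuine obstacle here; the only points deserving a line of justification are that all functions involved are regular enough to legitimately invoke the Calder\'on identity and the $L^2$-adjointness of $K_{\partial\Omega}$ and $K^\ast_{\partial\Omega}$ (this is precisely where Lemma \ref{lem:reg} enters), and that—in contrast to Lemma \ref{second}—no hypothesis $\lambda\neq 1/2$ is needed for this cancellation.
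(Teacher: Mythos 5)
Your proposal is correct and follows essentially the same route as the paper: the first term is handled via the Calder\'on identity \eqref{PSP} combined with $K^\ast_{\partial\Omega}[\mu_i]=\lambda\mu_i$, the second via $L^2$-adjointness of $K_{\partial\Omega}$ and $K^\ast_{\partial\Omega}$ combined with $K^\ast_{\partial\Omega}[\mu_j]=\lambda\mu_j$, and the two contributions cancel. The only cosmetic difference is that you apply the eigenfunction relation before taking the tangential gradient rather than inside the integral, which changes nothing.
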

\begin{proof}
By Calderon's identity \eqref{PSP} and the fact that $\mu_i$ is an eigenfunction of $K^*_{\partial\Omega}$ with eigenvalue $\lambda$, we have
\[
\begin{split}
&\int_{\partial \Omega}\theta_{\partial \Omega}\cdot \nabla_{\partial \Omega}K_{\partial \Omega}[S_{\partial \Omega}[\mu_i]]\;\mu_j\,d\sigma\\
&\qquad=\int_{\partial \Omega}\theta_{\partial \Omega}\cdot \nabla_{\partial \Omega}S_{\partial \Omega}[K^*_{\partial \Omega}[\mu_i]]\;\mu_j\,d\sigma=\lambda \int_{\partial \Omega}\theta_{\partial \Omega}\cdot \nabla_{\partial \Omega}S_{\partial \Omega}[\mu_i]\;\mu_j\,d\sigma\,.
\end{split}
\]
Moreover, since $K_{\partial\Omega}$ and $K^*_{\partial\Omega}$ are adjoint operators, and $\mu_j$ is an eigenfunction of $K^*_{\partial\Omega}$ with eigenvalue $\lambda$, we compute
\[
\begin{split}
&\int_{\partial \Omega}K_{\partial \Omega}[\theta_{\partial \Omega}\cdot \nabla_{\partial \Omega}S_{\partial \Omega}[\mu_i]]\;\mu_j\, d\sigma\\
&\qquad=\int_{\partial \Omega}\theta_{\partial \Omega}\cdot \nabla_{\partial \Omega}S_{\partial \Omega}[\mu_i]\;K^*_{\partial \Omega}[\mu_j]\, d\sigma=\lambda \int_{\partial \Omega}\theta_{\partial \Omega}\cdot \nabla_{\partial \Omega}S_{\partial \Omega}[\mu_i]\;\mu_j\,d\sigma\,.
\end{split}
\]
Hence, the lemma follows.
\end{proof}

Now, by equality \eqref{eq:dtKIt} and by Lemmas \ref{first}, \ref{second}, and \ref{third}, we readily deduce the following theorem for the matrix $A_\phi$ defined in \eqref{Aphi}.

\begin{theorem}\label{dAphi}
Under the assumptions and notations of Theorem \ref{main}, with $\lambda\neq1/2$ and $\phi_0=I$, we have
\[
\left(\frac{d}{dt}(A_{I+t\theta})_{|t=0}\right)_{ij}=\int_{\partial \Omega}\left(\theta \cdot \nu_{\Omega}\right)\; \bigg( -\nabla_{\partial \Omega}u_i \cdot \nabla_{\partial \Omega}u_j+\frac{\frac{1}{2}+\lambda}{\frac{1}{2}-\lambda}(\nu_{\Omega} \cdot \nabla u_i)( \nu_{\Omega} \cdot \nabla u_j) \bigg)\, d\sigma
\]
for all $i,j=1,\ldots,m$, where $u_i:=S^+_{\partial \Omega}[\mu_i]$ and $u_j:=S^+_{\partial \Omega}[\mu_j]$.\end{theorem}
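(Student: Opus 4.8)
The plan is to obtain the formula by assembling equation \eqref{eq:dtKIt} with the three Lemmas \ref{first}, \ref{second}, and \ref{third}, each of which was tailored to one of the three summands on the right-hand side of \eqref{eq:dtKIt}. The starting point is therefore \eqref{eq:dtKIt} itself: having already used Propositions \ref{der} and \ref{switcheroo} (together with Proposition \ref{niceDthetaK}) to reduce $(d_\phi A_{\phi_0}.\theta)_{ij}$ at $\phi_0 = I$ to a sum of three boundary integrals involving $\mu_i$, $\mu_j$, $S_{\partial\Omega}[\mu_i]$ and the operators $T_{\partial\Omega}$, $K_{\partial\Omega}$, $\nabla_{\partial\Omega}$, it remains only to simplify each of the three summands.

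First I would invoke Lemma \ref{first} to rewrite the double integral as $-\int_{\partial\Omega}(\theta\cdot\nu_\Omega)\,\nabla_{\partial\Omega}S_{\partial\Omega}[\mu_i]\cdot\nabla_{\partial\Omega}S_{\partial\Omega}[\mu_j]\,d\sigma$; then Lemma \ref{second}, which is where the standing hypothesis $\lambda\neq 1/2$ enters, to rewrite the $T_{\partial\Omega}$-term as $\frac{\frac12+\lambda}{\frac12-\lambda}\int_{\partial\Omega}(\theta\cdot\nu_\Omega)(\nu_\Omega\cdot\nabla S^+_{\partial\Omega}[\mu_i])(\nu_\Omega\cdot\nabla S^+_{\partial\Omega}[\mu_j])\,d\sigma$; and finally Lemma \ref{third} to discard the tangential term $\int_{\partial\Omega}\bigl(\theta_{\partial\Omega}\cdot\nabla_{\partial\Omega}K_{\partial\Omega}[S_{\partial\Omega}[\mu_i]] - K_{\partial\Omega}[\theta_{\partial\Omega}\cdot\nabla_{\partial\Omega}S_{\partial\Omega}[\mu_i]]\bigr)\mu_j\,d\sigma$, which vanishes. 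Adding the two surviving contributions and factoring out the common scalar $\theta\cdot\nu_\Omega$ produces an integrand depending only on the normal component of $\theta$, as anticipated from the Zol\'esio velocity method.

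The last step is the cosmetic passage to the functions $u_i := S^+_{\partial\Omega}[\mu_i]$ and $u_j := S^+_{\partial\Omega}[\mu_j]$: since the single-layer potential is continuous across $\partial\Omega$ and its tangential gradient does not jump there (Section \ref{sec:potential}), one has $\nabla_{\partial\Omega}S_{\partial\Omega}[\mu_k] = \nabla_{\partial\Omega}u_k$ on $\partial\Omega$, while $\nu_\Omega\cdot\nabla S^+_{\partial\Omega}[\mu_k] = \nu_\Omega\cdot\nabla u_k$ by definition; substituting these yields exactly the stated expression. Since the conceptual content has already been absorbed into \eqref{eq:dtKIt} and the three lemmas, I do not expect a serious obstacle: the argument is essentially bookkeeping. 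The one point that genuinely requires care is checking that the hypotheses of Lemma \ref{second} are in force, namely that $\mu_i$ and $\mu_j$ are bona fide eigenfunctions of $K^*_{\partial\Omega}$ for one and the same eigenvalue $\lambda\neq 1/2$ — which is precisely what Theorem \ref{main}(ii) delivers at $\phi_0 = I$ — and, if one wishes to phrase the result in terms of plasmonic eigenfunctions, recalling from Section \ref{sec:potential} that $u_k = S^+_{\partial\Omega}[\mu_k]$ (paired with $v_k = S^-_{\partial\Omega}[\mu_k]$) solves the plasmonic problem \eqref{plasmonic} with $\epsilon = \frac{\frac12+\lambda}{\frac12-\lambda}$.
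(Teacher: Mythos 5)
Your proposal is correct and follows exactly the paper's route: the theorem is obtained by substituting Lemmas \ref{first}, \ref{second}, and \ref{third} into the three summands of \eqref{eq:dtKIt} and rewriting the result in terms of $u_i=S^+_{\partial\Omega}[\mu_i]$, which is all the paper itself does. The only extra content in your write-up — checking that $\mu_i,\mu_j$ are eigenfunctions for the same $\lambda\neq 1/2$ and that the tangential gradient of the single layer does not jump — is a correct and harmless elaboration of points the paper leaves implicit.
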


We can now establish the theorem on the first derivatives of the symmetric functions of the eigenvalues:

\begin{theorem}\label{dLambda} With the assumptions and notations of Theorems \ref{main} and \ref{dAphi}, we have
\begin{equation}\label{dLambda.eq1}
\begin{split}
&\frac{d}{dt}\Bigl(\Lambda^m_h(\lambda_1[I+t\theta],\ldots,\lambda_m[I+t\theta])\Bigr)_{|t=0}=\lambda^{h-1}\binom{m-1}{h-1}\mathrm{trace}\left(\frac{d}{dt}(A_{I+t\theta})_{|t=0}\right)\\
&\quad=\lambda^{h-1}\binom{m-1}{h-1}\sum_{i=1}^m\int_{\partial \Omega}\left(\theta \cdot \nu_{\Omega}\right)\; \bigg( -\nabla_{\partial \Omega}u_i \cdot \nabla_{\partial \Omega}u_i+\frac{\frac{1}{2}+\lambda}{\frac{1}{2}-\lambda}(\nu_{\Omega} \cdot \nabla u_i)( \nu_{\Omega} \cdot \nabla u_i) \bigg)\, d\sigma
\end{split}
\end{equation}
for all $h=1,\dots,m$.
\end{theorem}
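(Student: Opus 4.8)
The plan is to split the argument into a purely linear–algebraic computation — the derivative of an elementary symmetric function of the eigenvalues of a matrix, evaluated at a scalar matrix — and the analytic input, which is entirely contained in Theorem \ref{dAphi}.

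First I would record that $A_{\phi_0}=\lambda I_m$. This is immediate from Theorem \ref{main}(ii): writing $\mu_j:=\mu_j[\phi_0]$, the functions $\mu_1,\dots,\mu_m$ form a $\langle\cdot,\cdot\rangle_{\sigma_n[\phi_0] S_{\phi_0}}$–orthonormal basis of $V'(K^\ast_{\phi_0})=\ker(K^\ast_{\phi_0}-\lambda)$ (using that $\lambda$ has multiplicity $m$), so $(A_{\phi_0})_{ij}=\langle K^\ast_{\phi_0}\mu_i,\mu_j\rangle_{\sigma_n[\phi_0] S_{\phi_0}}=\lambda\langle\mu_i,\mu_j\rangle_{\sigma_n[\phi_0] S_{\phi_0}}=\lambda\delta_{ij}$. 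Next I would use that $\Lambda^m_h$ is the $h$-th elementary symmetric polynomial, so that, expanding the characteristic polynomial in \eqref{secular}, for any $(m\times m)$ matrix $A$ with eigenvalues $\xi_1,\dots,\xi_m$ one has
\[
\Lambda^m_h(\xi_1,\dots,\xi_m)=\sum_{\substack{S\subseteq\{1,\dots,m\}\\ |S|=h}}\det(A_S),
\]
where $A_S$ is the principal submatrix of $A$ with rows and columns in $S$. Applying this with $A=A_{I+t\theta}$ (whose eigenvalues are $\lambda_1[I+t\theta],\dots,\lambda_m[I+t\theta]$ by the proof of Theorem \ref{main}(iii)), the map $t\mapsto\Lambda^m_h(\lambda_1[I+t\theta],\dots,\lambda_m[I+t\theta])$ becomes a polynomial in the entries of $A_{I+t\theta}$, which by Theorem \ref{main} depend real-analytically, hence differentiably, on $t$; in particular the chain rule applies and I never need differentiability of the individual eigenvalue branches $\lambda_j[\,\cdot\,]$.

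Then, writing $A':=\frac{d}{dt}(A_{I+t\theta})_{|t=0}$ and using $A_{\phi_0}=\lambda I_m$ together with Jacobi's formula for the derivative of a determinant, each $S$ with $|S|=h$ contributes
\[
\frac{d}{dt}\det\bigl((A_{I+t\theta})_S\bigr)_{|t=0}=\det(\lambda I_h)\,\mathrm{trace}\bigl((\lambda I_h)^{-1}A'_S\bigr)=\lambda^{h-1}\sum_{i\in S}A'_{ii},
\]
where the invertibility of $\lambda I_h$ uses $\lambda\neq 0$. Summing over all $S$ of size $h$ and noting that each index $i\in\{1,\dots,m\}$ belongs to exactly $\binom{m-1}{h-1}$ such subsets yields
\[
\frac{d}{dt}\Lambda^m_h(\lambda_1[I+t\theta],\dots,\lambda_m[I+t\theta])_{|t=0}=\lambda^{h-1}\sum_{|S|=h}\sum_{i\in S}A'_{ii}=\lambda^{h-1}\binom{m-1}{h-1}\,\mathrm{trace}(A'),
\]
which is the first equality in \eqref{dLambda.eq1}. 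Finally I would substitute the explicit form of the diagonal entries $A'_{ii}$ provided by Theorem \ref{dAphi} (with $u_i=S^+_{\partial\Omega}[\mu_i]$), obtaining the second equality and completing the proof.

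I do not expect any substantial analytic obstacle, since all the hard work sits in Theorem \ref{dAphi}; the only points needing care are conceptual. One must differentiate $\Lambda^m_h$ as a polynomial in the matrix entries of $A_\phi$ rather than through the individual eigenvalues, which are in general not differentiable precisely because they are degenerate at $\phi_0$; and one must correctly evaluate the derivative of the $h\times h$ principal minors at the scalar matrix $\lambda I_m$. This last evaluation, via Jacobi's formula and the combinatorial count of $h$-subsets containing a given index, is exactly what produces the prefactor $\lambda^{h-1}\binom{m-1}{h-1}$.
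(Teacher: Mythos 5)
Your proof is correct, and it takes a genuinely different route from the paper's. The paper first invokes the Newton--Puiseux/Rellich--Nagy theorem to obtain analytic eigenvalue branches $t\mapsto\lambda_{j,t}$ of the one-parameter family $A_{I+t\theta}$ with $\lambda_{j,0}=\lambda$, then differentiates the products $\lambda_{j_1,t}\cdots\lambda_{j_h,t}$ directly, using the coincidence of all branches at $t=0$ to extract the factor $\lambda^{h-1}$ and the same subset count $\binom{m-1}{h-1}$, and finally identifies $\sum_j\lambda_{j,t}$ with $\mathrm{trace}(A_{I+t\theta})$. You instead bypass eigenvalue branches altogether by writing $\Lambda^m_h$ as the sum of principal $h\times h$ minors of $A_{I+t\theta}$ and applying Jacobi's formula at the scalar matrix $A_{\phi_0}=\lambda I_m$; the prefactor $\lambda^{h-1}\binom{m-1}{h-1}$ then drops out of $\det(\lambda I_h)\,\mathrm{trace}\bigl((\lambda I_h)^{-1}A'_S\bigr)$ and the count of $h$-subsets containing a fixed index. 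Your argument is more elementary and self-contained (no Rellich--Nagy needed), and since it differentiates a polynomial in the matrix entries it extends verbatim to the full differential $d_\phi$ with respect to the infinite-dimensional parameter, not just to directional derivatives along $t\mapsto I+t\theta$. What the paper's branch-based approach buys in exchange is the finer information on the individual analytic branches and their first-order derivatives, which is precisely what is needed for the subsequent Rellich--Nagy-type Corollary \ref{corhenry}; setting up that machinery here lets the corollary follow almost immediately. Both arguments use the same two inputs -- that $A_{\phi_0}=\lambda I_m$ (which you justify correctly from the orthonormality of the $\mu_j$ and $V'(K^\ast_{\phi_0})=\ker(K^\ast_{\phi_0}-\lambda)$) and the explicit formula of Theorem \ref{dAphi} -- so the analytic content is identical.
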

\begin{proof} Given that $\mathcal{U}\ni \phi\mapsto A_\phi\in\mathbb{M}_m$ is real analytic, as established in Proposition~\ref{der}, we deduce that the function  $t\mapsto A_{I+t\theta}$ is real analytic in a neighborhood of zero. Hence, we can invoke the Newton-Puiseux/Rellich-Nagy theorem, which guarantees the existence of analytic branches of eigenvalues $t\mapsto \lambda_{j,t}$ for $j=1,\dots,m$, with $\lambda_{j,0}=\lambda$, such that, for any fixed $t$, the eigenvalues $\lambda_{1,t}$,\ldots,$\lambda_{m,t}$ coincide with the eigenvalues $\lambda_1[I+t\theta]\leq \ldots\leq\lambda_m[I+t\theta]$ from Theorem \ref{main}, albeit potentially in a different order (see Kato \cite[Chapter 7, \S 1]{Ka95}, Rellich \cite[Theorem 1, p.~33]{Re69}). 

Since the symmetric functions are invariant with respect to the order, we have
\[
\Lambda^m_h(\lambda_1[I+t\theta],\ldots,\lambda_m[I+t\theta])=\Lambda^m_h(\lambda_{1,t},\ldots,\lambda_{m,t})\,.
\]
We can then use the expression on the right-hand side to compute the differential of $\Lambda^m_h(\lambda_1[I+t\theta],\ldots,\lambda_m[I+t\theta])$. By definition \eqref{symmetric}, we obtain:
\[
\frac{d}{dt}\Bigl(\Lambda^m_h(\lambda_1[I+t\theta],\ldots,\lambda_m[I+t\theta])\Bigr)_{|t=0}=\sum_{\substack{j_1,\ldots,j_h\in \{1,\ldots, m\}\\ j_1 < \cdots < j_h}} \frac{d}{dt}\left(\lambda_{j_1,t} \cdots \lambda_{j_h,t}\right)_{|t=0}\,.
\]
Since $\lambda_{1,0}=\ldots=\lambda_{m,0}=\lambda$, the right-hand side simplifies to:
\[
\lambda^{h-1}\sum_{\substack{j_1,\ldots,j_h\in \{1,\ldots, m\}\\ j_1 < \cdots < j_h}} \frac{d}{dt}\left(\lambda_{j_1,t}\right)_{|t=0}+\frac{d}{dt}\left(\lambda_{j_2,t}\right)_{|t=0}+\cdots+\frac{d}{dt}\left(\lambda_{j_h,t}\right)_{|t=0}\,.
\]
In this sum, each term $\frac{d}{dt}\left(\lambda_{j,t}\right)_{|t=0}$ appears as many times as the possible  subsets of $h-1$ elements of $\{1,\ldots,m\}\setminus\{j\}$, which is $\binom{m-1}{h-1}$ times. Hence, the last expression equals:
\[
\lambda^{h-1}\binom{m-1}{h-1}\sum_{j=1}^m \frac{d}{dt}\left(\lambda_{j,t}\right)_{|t=0}\,.
\]
Since $\lambda_{1,t}+\dots+\lambda_{m,t}=\mathrm{trace}\left(A_{I+t\theta}\right)$, this can be expressed as:
\[
\lambda^{h-1}\binom{m-1}{h-1}\mathrm{trace}\left(\frac{d}{dt}(A_{I+t\theta})_{|t=0}\right)\,.
\] 
Therefore, the theorem follows directly from Theorem \ref{dAphi}.
\end{proof}

\begin{remark}\label{phit}
The directional derivatives computed in Theorems \ref{dAphi} and \ref{dLambda}, using the perturbation \( t \mapsto I + t\theta \), yield expressions for the differentials of the maps that take \(\phi\) to \(A_\phi\) and to \(\Lambda^m_h(\lambda_1[\phi], \ldots, \lambda_m[\phi])\). For example, when $\phi_0=I$, we have
\[
d_\phi A_{\phi_0}.\theta=\frac{d}{dt}(A_{I + t\theta})_{|t=0}\,.
\]
Then, if we replace the perturbation \( t \mapsto I + t\theta \) with a more general real analytic perturbation \( t \mapsto \phi_t \), where \(\phi_0 = I\), the equality in Theorem \ref{dAphi} becomes  
\[
\left(\frac{d}{dt}(A_{\phi_t})_{|t=0}\right)_{ij} = \int_{\partial \Omega} \left( \frac{d}{dt}(\phi_t)_{|t=0} \cdot \nu_{\Omega} \right) \bigg( -\nabla_{\partial \Omega} u_i \cdot \nabla_{\partial \Omega} u_j + \frac{\frac{1}{2} + \lambda}{\frac{1}{2} - \lambda} (\nu_{\Omega} \cdot \nabla u_i)(\nu_{\Omega} \cdot \nabla u_j) \bigg) \, d\sigma,
\]
where \( u_i \) and \( u_j \) are as defined in the theorem.
\end{remark}

We conclude this section with a Rellich-Nagy-type result concerning the right- and left-hand derivatives of multiple eigenvalues. The proof follows the approach of \cite[Corollary 2.28]{LaLa04}, which deals with families of self-adjoint operators. However, since our framework differs slightly, we include additional details for clarity. We present the result for  a perturbation \( t \mapsto \phi_t \), instead of the more restrictive case \( t \mapsto I + t\theta \).

\begin{corollary}[Rellich-Nagy]\label{corhenry} 
Under the assumptions and notations of Theorem \ref{main}, assume that $\lambda \neq 1/2$ and $\phi_0 = I$. Let $t_0 > 0$ and $(-t_0, t_0) \ni t \mapsto \phi_t \in \mathcal{U}$ be a real-analytic function. Then, for a possibly smaller $t_0 > 0$, there exist $m$ real-analytic functions $(-t_0, t_0) \ni t \mapsto \lambda_{j,t} \in \mathbb{R}$ such that  
\begin{equation}\label{henry0}
\{\lambda_{j,t} : j \in \{1, \dots, m\} \} = \{\lambda_j[\phi_t] : j \in \{1, \dots, m\}\}.
\end{equation}
Moreover, the set $\big\{ \frac{d}{dt}\left(\lambda_{j,t}\right)_{|t=0} : j \in \{1, \dots, m\}\big\}$  
of the first-order derivatives at \( t = 0 \) coincides with the set of eigenvalues of the matrix  
\begin{equation}\label{henry}
\left(\int_{\partial \Omega}\left(\frac{d}{dt}\big(\phi_t\big)_{|t=0} \cdot \nu_{\Omega}\right) \bigg( -\nabla_{\partial \Omega}u_i \cdot \nabla_{\partial \Omega}u_j + \frac{\frac{1}{2} + \lambda}{\frac{1}{2} - \lambda} (\nu_{\Omega} \cdot \nabla u_i)(\nu_{\Omega} \cdot \nabla u_j) \bigg) \, d\sigma\right)_{i,j=1,\dots,m},
\end{equation}
where \( u_i := S^+_{\partial \Omega}[\mu_i] \) and \( u_j := S^+_{\partial \Omega}[\mu_j] \).

In particular, the functions \( (-t_0, t_0) \ni t \mapsto \lambda_j[\phi_t] \in \mathbb{R} \) are real-analytic on \([0, t_0)\) and \((-t_0, 0]\), and the right- and left-hand derivatives  $\frac{d}{dt}\left(\lambda_{j}[\phi_t]\right)_{|t=0^+}$, $\frac{d}{dt}\left(\lambda_{j}[\phi_t]\right)_{|t=0^-}$ are given by the eigenvalues of the matrix in \eqref{henry}.
\end{corollary}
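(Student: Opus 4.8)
The plan is to deduce everything from the real analyticity of $\phi\mapsto A_\phi$ established in Proposition~\ref{der}, combined with the classical Rellich theorem on analytic perturbations of symmetric matrices. First I would note that, by \eqref{dueconti} and the fact that $\{\mu_1[\phi],\dots,\mu_m[\phi]\}$ is $\langle\cdot,\cdot\rangle_{\sigma_n[\phi] S_\phi}$-orthonormal, the matrix $A_\phi$ is \emph{real symmetric} for every $\phi\in\mathcal{U}$, since $(A_\phi)_{ij}=\langle K^\ast_\phi[\mu_i[\phi]],\mu_j[\phi]\rangle_{\sigma_n[\phi] S_\phi}=\langle\mu_i[\phi],K^\ast_\phi[\mu_j[\phi]]\rangle_{\sigma_n[\phi] S_\phi}=(A_\phi)_{ji}$. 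Composing the real-analytic map $\mathcal{U}\ni\phi\mapsto A_\phi\in\mathbb{M}_m$ of Proposition~\ref{der} with the real-analytic curve $t\mapsto\phi_t$, which takes values in $\mathcal{U}$ by hypothesis, yields a real-analytic one-parameter family $(-t_0,t_0)\ni t\mapsto A_{\phi_t}$ of real symmetric $m\times m$ matrices. By the Rellich--Nagy theorem for such families (Kato \cite[Ch.~II \S6 and Ch.~VII]{Ka95}, Rellich \cite[Thm.~1, p.~33]{Re69}), there exist real-analytic functions $(-t_0,t_0)\ni t\mapsto\lambda_{j,t}\in\mathbb{R}$, $j=1,\dots,m$, whose values at each fixed $t$ enumerate, with multiplicity, the eigenvalues of $A_{\phi_t}$; by the proof of Theorem~\ref{main} these are exactly the $m$ eigenvalues $\lambda_1[\phi_t]\le\cdots\le\lambda_m[\phi_t]$ of $K^\ast_{\phi_t}$ in $(\lambda-\delta,\lambda+\delta)$, which gives \eqref{henry0}.

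Next I would identify the derivatives $\frac{d}{dt}(\lambda_{j,t})_{|t=0}$. Since $\lambda$ has multiplicity $m$ at $\phi_0=I$, all $\lambda_{j,0}$ equal $\lambda$, so $A_{\phi_0}=\lambda I_m$ and $A_{\phi_t}=\lambda I_m+tB+o(t)$ with $B:=\frac{d}{dt}(A_{\phi_t})_{|t=0}$. Writing the characteristic polynomial of $A_{\phi_t}$ as $\det(A_{\phi_t}-\tau I_m)=\prod_{j=1}^m(\lambda_{j,t}-\tau)$, substituting $\tau=\lambda+ts$, dividing both sides by $t^m$ and letting $t\to 0$ gives $\det(B-sI_m)=\prod_{j=1}^m\big(\tfrac{d}{dt}(\lambda_{j,t})_{|t=0}-s\big)$, so the set $\{\tfrac{d}{dt}(\lambda_{j,t})_{|t=0}:j=1,\dots,m\}$ is precisely the spectrum of $B$. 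Finally, by Remark~\ref{phit} (the form of Theorem~\ref{dAphi} for a general real-analytic perturbation with $\phi_0=I$), $B$ equals the matrix in \eqref{henry}, which proves the assertion about the first-order derivatives.

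For the last claim I would use that finitely many pairwise-distinct real-analytic functions on an interval agree only on a discrete set, so, after shrinking $t_0$, the functions $t\mapsto\lambda_{j,t}$ have a constant relative order on $(0,t_0)$ and a constant relative order on $(-t_0,0)$, any coincidence occurring only at $t=0$. Together with continuity at $0$, this forces the ordered eigenvalues $\lambda_j[\phi_t]$ to be obtained from $(\lambda_{1,t},\dots,\lambda_{m,t})$ by a fixed permutation $\sigma_+$ on $[0,t_0)$ and by a fixed permutation $\sigma_-$ on $(-t_0,0]$. Hence each $t\mapsto\lambda_j[\phi_t]$ is real-analytic on $[0,t_0)$ and on $(-t_0,0]$, with $\frac{d}{dt}(\lambda_j[\phi_t])_{|t=0^+}=\frac{d}{dt}(\lambda_{\sigma_+(j),t})_{|t=0}$ and $\frac{d}{dt}(\lambda_j[\phi_t])_{|t=0^-}=\frac{d}{dt}(\lambda_{\sigma_-(j),t})_{|t=0}$; since $\sigma_\pm$ are permutations, both the set of right-hand and the set of left-hand derivatives at $0$ coincide with the spectrum of the matrix \eqref{henry}, established in the previous paragraph.

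The step I expect to need the most care is not conceptual but organizational: making fully explicit that $A_{\phi_t}$ is symmetric (so that the Rellich analytic-branch theorem genuinely applies, an analytic family of general matrices having no analytic eigenbranches in general) and carefully handling the passage from the unordered analytic branches $\lambda_{j,t}$ to the ordered functions $\lambda_j[\phi_t]$ across the crossing point $t=0$. The identification $B=$ the matrix \eqref{henry} is immediate from Remark~\ref{phit}, and the limiting argument on the characteristic polynomial is elementary.
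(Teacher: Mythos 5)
Your proposal is correct and follows essentially the same route as the paper: analytic branches from the Rellich--Nagy theorem applied to the real-analytic symmetric family $t\mapsto A_{\phi_t}$, identification of the derivative set via the characteristic polynomial with the substitution $\tau=\lambda+ts$ (the paper differentiates $m$ times at $t=0$ where you divide by $t^m$ and pass to the limit, an equivalent computation), and the identification of $\frac{d}{dt}(A_{\phi_t})_{|t=0}$ with the matrix \eqref{henry} via Theorem~\ref{dAphi} and Remark~\ref{phit}. Your explicit verification that $A_\phi$ is symmetric (so that genuinely analytic, rather than merely Puiseux, branches exist) and your careful treatment of the fixed permutations relating the unordered branches to the ordered eigenvalues on each side of $t=0$ are points the paper leaves implicit, and are welcome additions.
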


\begin{proof}  As in the proof of Theorem~\ref{dLambda}, the existence of functions $(-t_0, t_0) \ni t \mapsto \lambda_{j,t} \in \mathbb{R}$ satisfying \eqref{henry0} follows from the Newton-Puiseux/Rellich-Nagy theorem. Thus, it remains to prove that the set of their derivatives at zero coincides with the eigenvalues of the matrix in \eqref{henry}. Recall that the eigenvalues $\lambda_{j,t}$ are the roots of the characteristic equation of the matrix $A_{\phi_t}$. Therefore, 

\begin{equation}\label{henry1}
{\rm det} (\tau I - A_{\phi_t}) = \prod_{j=1}^m (\tau - \lambda_{j,t})
\end{equation}
for all $\tau \in \mathbb{R}$. By substituting $\tau$ with $\lambda + \chi t$ in \eqref{henry1} for all $\chi \in \mathbb{R}$ and differentiating both sides of the resulting equation $m$ times with respect to $t$ at $t = 0$, and recallig that $\lambda=\lambda_{1,0}=\ldots=\lambda_{m,0}$, we obtain 

\[
{\rm det} \left(\chi I - \frac{d}{dt}(A_{\phi_t})_{|t=0}\right) = \prod_{j=1}^m \left(\chi - \frac{d}{dt}\left(\lambda_{j,t}\right)_{|t=0}\right).
\]
This, combined with Theorem~\ref{dAphi} and Remark \ref{phit}, completes the proof.
\end{proof}

\begin{remark}\label{henrysym} For computational purposes, it might be useful to observe that the matrix in \eqref{henry} is symmetric. Thus, by possibly modifying the basis \( \mu_1, \ldots, \mu_m \) through an orthogonal transformation, we can always work in a setting where the matrix in \eqref{henry} is diagonal, and the set of derivatives at zero of the analytic branches coincides with the set of its diagonal entries. 
\end{remark}

\section{Comparison with the existing literature}\label{sec:comparison}

\subsection{Comparison with Grieser's findings}\label{sec:grieser}
The results from the previous section can be compared to the findings of Grieser in \cite{Gr14}. In particular, the case of a one-dimensional perturbation generated by the map $ \phi_t $ defined by
\[
\phi_t(x):=x+t\theta(x)\quad\text{for all } x\in\mathbb{R}^n,
\]
where $ \theta\in (C^{1,\alpha}(\partial\Omega))^n $, is sufficiently general to encompass the assumptions of Grieser's paper, in which $\theta=a\nu_\Omega$ for some smooth scalar function $a$ on $\partial\Omega$. 

In \cite[Theorem 4]{Gr14}, Grieser demonstrates the existence of analytic branches of eigenvalues and eigenfunctions and provides a formula to compute the first derivatives of the eigenvalues at zero (he also derives a formula for the second derivatives, but tackling these is beyond our current scope). Up to renormalizing the functions $u_i$, the expression that we can deduce from Corollary \ref{corhenry} coincides with Grieser's one and extends it from the case where $ \theta=a\nu_\Omega $  to the more general case where $ \theta $ is any vector function in $ (C^{1,\alpha}(\partial\Omega))^n $. In other words, we can allow perturbations that are not in the normal direction. Besides, we have reduced the regularity assumptions from the case of smooth sets to the $C^{1,\alpha}$ setting.

We may also notice that Corollary \ref{corhenry} addresses only the eigenvalues, without considering the analytic branches of the corresponding eigenfunctions. These can, however, be recovered using the celebrated result of Rellich and Nagy, which guarantees the existence of real-analytic maps  
\[
(-t_0, t_0) \ni t \mapsto \lambda_{j,t} \in \mathbb{R} \quad \text{and} \quad (-t_0, t_0) \ni t \mapsto v_j(t) \in \mathbb{R}^n,
\]
for \( j = 1, \dots, m \), such that \( \lambda_{j,0} = \lambda \) and  
\[
A_{I+t\theta} \, v_j(t) = \lambda_{j,t} \, v_j(t).
\]
The set \( \{v_1(t), \dots, v_m(t)\} \) is orthonormal with respect to the inner product of \( \mathbb{R}^m \) and, consequently, the matrix \( R(t):=(v_1(t) \dots v_m(t)) \), whose columns are the vectors \( v_j(t) \), is orthogonal. Defining  
\[
\tilde\mu_i(t) := \sum_{j=1}^m R_{ij}(t) \mu_j[I+t\theta] \quad \text{for all } i = 1, \dots, m \text{ and } t \in (-t_0, t_0),
\]
we obtain a \( (\langle \cdot, \cdot \rangle_{\sigma_n[\phi_t] S_{\phi_t}}) \)-orthonormal system \( \{\tilde\mu_1(t), \ldots, \tilde\mu_m(t)\} \) of eigenfunctions of \( K^*_{I+t\theta} \), associated with the eigenvalues \( \lambda_{1,t}, \ldots, \lambda_{m,t} \), and the maps  
\[
(-t_0,t_0) \ni t \mapsto \tilde\mu_i(t) \in C^{0,\alpha}(\partial\Omega)
\]
are real analytic. Using the specific basis $ \{\tilde\mu_1(0),\ldots,\tilde\mu_m(0)\} $ for the eigenspace of $\lambda$, we can refine the result of Corollary \ref{corhenry} and write 
\[
\frac{d}{dt}\left(\lambda_{j,t}\right)_{|t=0}=\int_{\partial \Omega}\left(\theta \cdot \nu_{\Omega}\right)\;\biggl( -\bigl| \nabla_{\partial \Omega}\tilde u_j \bigr|^2+\frac{\frac{1}{2}+\lambda}{\frac{1}{2}-\lambda}(\nu_{\Omega} \cdot \nabla \tilde u_j)^2\biggr)\, d\sigma
\]
for all $ j=1,\ldots,m $, with $ \tilde u_j:=S^+_{\partial \Omega}[\tilde \mu_j(0)] $ (see also Remark \ref{henrysym}).  Then, taking $v_j:=\tilde u_j/\sqrt{\frac{1}{2}-\lambda}$, we obtain
\[
\frac{d}{dt}\left(\lambda_{j,t}\right)_{|t=0}=\int_{\partial \Omega}\left(\theta \cdot \nu_{\Omega}\right)\;\biggl( -\Bigl(\frac{1}{2}-\lambda\Bigr)\bigl| \nabla_{\partial \Omega} v_j \bigr|^2+\Bigl({\frac{1}{2}+\lambda}\Bigr)(\nu_{\Omega} \cdot \nabla v_j)^2\biggr)\, d\sigma
\]
for all $ j=1,\ldots,m $.
By the jump formula \eqref{nusinglejump} and the normalization of $\mu_j$,  it follows that $\| \nabla v_j\|_{L^2(\Omega)}=1$ for all $j=1, \dots , m$, which is the normalization of the plasmonic   eigenfunctions  used in \cite[Theorem 4]{Gr14}.

\subsection{Comparison with Zol\'esio's speed method}\label{sec:Zol\'esio}

We notice that only the normal component of $ \theta $ contributes to the first derivative \eqref{henry}. This observation aligns with what could be expected in light of Zol\'esio's velocity approach to shape perturbations (cf.~Zol\'esio \cite{Zo80}, Delfour and Zol\'esio \cite{DeZo11}, Soko\l owski and Zol\'esio \cite{SoKo92}). The basic idea of this approach is to identify a family $ \Phi_t $ of diffeomorphisms from $ \mathbb{R}^n $ to itself, where $ t $ ranges in an interval $ (-t_0,t_0) $ and $ \Phi_0(x)=x $, with the unique solution $ \xi(t,x) $ of a Cauchy problem 
\[
\begin{cases}
\frac{d}{dt}\xi(t,x)=V(t,\xi(t,x))\,,\\
\xi(0,x)=x\,.
\end{cases}
\]
Under suitable regularity assumptions, one can prove that $ \Phi_t(x)=\xi(t,x) $ whenever the non-autonomous vector field $ V(t,\xi) $ equals $ (\frac{d}{dt}\Phi_t)\circ\Phi_t^{-1}(\xi) $. For example, in the case where $ \Phi_t=I+t\theta $ for some $ \theta\in (C_0^{1,\alpha}(\mathbb{R}^n))^n $, the equivalence holds with $ V(t,\xi)=\theta\circ\Phi^{-1}_t(\xi) $. 

For a suitable vector field $ V(t,\xi) $, the perturbed set is defined by $ \Omega_t(V):=\Phi_t(\Omega) $, and a shape functional $ J $ is said to be shape-differentiable if the limit 
\[
dJ[\Omega,V]:=\lim_{t\to 0}\frac{J[\Omega_t(V)]-J[\Omega]}{t}
\] 
exists and defines a linear continuous functional on the space of admissible vector fields $ V(t,\xi) $ equipped with a suitable topology (this definition is slightly more restrictive than Zol\'esio's, but it suffices for our purposes). 

In our specific case, we consider the matrix $ A_\phi $ as defined in Theorem \ref{der}. Since $ K^*_\phi $ only depends on the image of $ \partial\Omega $ under $ \phi $, i.e., $ K^*_{\phi}=K^*_{\psi} $ whenever $ \phi(\partial\Omega)=\psi(\partial\Omega) $, we can define a (matrix-valued) shape functional by setting
\[
A[\Phi(\Omega)]=A_\phi
\]
for all admissible domains $ \Phi(\Omega) $, where $ \Phi\in (C^{1,\alpha}(\mathbb{R}^n))^n $ is a diffeomorphism such that $  \phi:=\Phi_{|\partial\Omega}\in\mathcal{U} $. Then, by virtue of the real analyticity, and hence differentiability, of the map $\mathcal{U}\ni\phi\mapsto A_\phi\in\mathbb{M}_m$, we have
\[
dA[\Omega,V]=\lim_{t\to 0}\frac{A[\Omega_t(V)]-A[\Omega]}{t}=\frac{d}{dt}(A_{\phi_t})_{|t=0}=d_\phi A_{\phi_0}.\left(\frac{d}{dt}\phi_t\right)_{|t=0}=d_\phi A_{\phi_0}.V(0,\cdot)_{|\partial\Omega}\,,
\]
where we understand that $\phi_t:=\Phi_{t|\partial\Omega}$.

So, we see at once that the shape derivative $dA[\Omega,V]$ only depends on the value at zero of the restriction to $\partial\Omega$ of vector field $V(t,\xi)$. Namely, on $\partial\Omega\ni\xi\mapsto V(0,\xi)\in\mathbb{R}^n$. In particular, the shape derivative $dA[\Omega,\theta\circ\Phi^{-1}_t(\xi)]$ corresponding to $\Phi_t=I+t\theta$, coincides with the shape derivative $dA[\Omega,\theta]$ corresponding to the autonomous equation 
\begin{equation}\label{autonomous}
\frac{d}{dt}\xi(t,x)=\theta(\xi(t,x))\,.
\end{equation}
Moreover,
\[
d_\phi A_{\phi_0}.\theta=dA[\Omega,\theta]\,
\]
and $\theta\mapsto dA[\Omega,\theta]$ defines a linear continuous functional on $(C_0^{1,\alpha}(\mathbb{R}^n))^n$.

Now, following a similar approach to Delfour and Zol\'esio \cite[Proof of Theorem 2.4]{DeZo90}, we can rely on a result by Nagumo \cite{Na42} (see also H\"ormander \cite[Theorem 8.5.11]{Ho03}), which guarantees that if 
\[
\theta\cdot\nu_\Omega=0\quad\text{on }\partial\Omega,
\] 
then every integral curve of equation \eqref{autonomous} that intersects $\partial\Omega$ remains confined to $\partial\Omega$. Thus, for such $\theta$, we have $\Omega_t(\theta)=\Omega$, and therefore $A[\Omega_t(V)]=A[\Omega]$, $dA[\Omega,\theta]=0$, and finally 
\[
d_\phi A_{\phi_0}.\theta=0
\]
(For further details, see Delfour and Zol\'esio \cite{DeZo90}). This approach enables us to prove that the null space of $d_\phi A_{\phi_0}$ contains the set 
\[
(C_0^{1,\alpha}(\mathbb{R}^n))^n_{\nu_\Omega}=\left\{\theta\in(C_0^{1,\alpha}(\mathbb{R}^n))^n\,:\; \theta\cdot\nu_{\Omega}=0\text{ on }\partial\Omega\right\}\,,
\]
a result confirmed by the explicit expression of $d_\phi A_{\phi_0}$ computed in Theorem \ref{dAphi}.

It is now convenient to choose $ R>0 $ such that $ \overline\Omega\subset \mathbb{B}_n(0,R) $ and consider vector fields $ \theta $ with support in $ \mathbb{B}_n(0,R) $. We observe that the corresponding set $ (C_0^{1,\alpha}(\mathbb{B}_n(0,R)))^n $ forms a Banach space, and $ (C_0^{1,\alpha}(\mathbb{B}_n(0,R)))^n_{\nu_\Omega}:=(C_0^{1,\alpha}(\mathbb{R}^n))^n_{\nu_\Omega}\cap (C_0^{1,\alpha}(\mathbb{B}_n(0,R)))^n $ is a closed subspace. Thus, $ d_\phi A_{\phi_0} $ defines a (matrix-valued) functional on the quotient Banach space 
\[
\frac{(C_0^{1,\alpha}(\mathbb{B}_n(0,R)))^n}{(C_0^{1,\alpha}(\mathbb{B}_n(0,R)))^n_{\nu_\Omega}}\,.
\] 
Furthermore, if $ \Omega $ is of class $ C^{2,\alpha} $, so that $ \nu_\Omega $ is of class $ C^{1,\alpha} $, then the map 
\[
\frac{(C_0^{1,\alpha}(\mathbb{B}_n(0,R)))^n}{(C_0^{1,\alpha}(\mathbb{B}_n(0,R)))^n_{\nu_\Omega}}\ni \theta\mapsto \theta\cdot\nu_\Omega\in C^{1,\alpha}(\partial\Omega)
\]
is an isomorphism of Banach spaces. In fact, the inverse of $ \theta\mapsto\theta\cdot\nu_\Omega $ is given by $ a\mapsto \tilde a \,n_\Omega $, where $\tilde a\in C_0^{1,\alpha}(\mathbb{B}_n(0,R))$ is an extension of $a$ and $ n_\Omega\in (C_0^{1,\alpha}(\mathbb{B}_n(0,R)))^n $ is an extension of $ \nu_\Omega $. Thus, provided that $ \Omega $ is of class $ C^{2,\alpha} $, we can conclude that there exists a (matrix-valued) functional $ \mathcal{A} $ on $ C^{1,\alpha}(\partial\Omega) $ such that 
\[
d_\phi A_{\phi_0}.\theta=\mathcal{A}(\theta\cdot\nu_\Omega)\,.
\]

We observe that in Theorem \ref{dAphi}, we reach the same conclusion with a weaker regularity assumption on $ \Omega $. However, it is possible that by combining Zol\'esio's velocity approach \cite{Zo80} with Grieser's findings in \cite{Gr14}, along with the analyticity results of Theorem \ref{main} and Proposition \ref{der}, we may be able to establish the result of Theorem \ref{dAphi} (and the first derivative formula \eqref{henry} for the analytic branches) without going through the computations of Section \ref{derivatives}, at least in the case of $C^{2,\alpha}$ domains. Nonetheless, the approach in Section \ref{derivatives} offers its own advantages. On one hand, we can deal with sets $ \Omega $ of class $ C^{1,\alpha} $. On the other hand, we obtain a proof that is entirely independent of Grieser's computations.

\section{Applications}\label{sec:applications}

\subsection{A Rellich-Poho\v{z}aev-type formula}

We can replicate an argument used to prove the Rellich-Poho\v{z}aev formula for Dirichlet-Laplacian eigenvalues and adapt it to the case of Neumann-Poincar\'e eigenvalues. 

Specifically, for Dirichlet-Laplacian eigenvalues, the Rellich-Poho\v{z}aev formula can be obtained by comparing two different expressions for the shape derivative in the case of domain dilation: one derived from specializing the Hadamard formula, and another derived from knowing that the eigenvalues scale by a factor of $t^{-2}$ when the domain dilates by a factor of $t>0$ (see  \cite{DiLa20}  with di Blasio for the more general case 
of the Finsler $p$-Laplacian).

In the case of the Neumann-Poincar\'e eigenvalues, we can use formula \eqref{henry} instead of the Hadamard formula and we can note that the (pull-back of the) Neumann-Poincar\'e operator $K^*_\phi$ (and thus its eigenelements) remains unchanged under domain dilation.

Indeed, dilations of the domain $\Omega$ correspond to the specific case where the maps $\phi_t$ are given by 
\[
(-1,1)\ni t\mapsto\phi_t(x):=x+tx\quad\text{for all } x\in\partial\Omega.
\]
Namely, this is the case where we have $\phi_t=I+tI$ with $t\in (-1,1)$ and $\theta(x)=x$ is the identity map on $\partial\Omega$. Then, with a straightforward computation based on the rule of change of variable in integrals and on the homogeneity of the (gradient of the) fundamental solution $E_n$, we can verify that $K^*_{I+tI}=K^*_{\partial\Omega}$ for all $t\in(-1,1)$. It follows that
\[
\frac{d}{dt}(K^*_{I+tI})_{|t=0}=0,
\]
and by Proposition \ref{der}, we deduce that
\[
\frac{d}{dt}(A_{I+tI})_{|t=0}=0.
\]
Hence, Theorem \ref{dAphi} implies that
\[
\int_{\partial \Omega}\left(x \cdot \nu_{\Omega}\right)\; \bigg( -\nabla_{\partial \Omega}u_i \cdot \nabla_{\partial \Omega}u_j+\frac{\frac{1}{2}+\lambda}{\frac{1}{2}-\lambda}(\nu_{\Omega} \cdot \nabla u_i)( \nu_{\Omega} \cdot \nabla u_j) \bigg)\, d\sigma_x=0.
\]
In particular, we have
\begin{equation}\label{RPL.eq1}
\int_{\partial \Omega}\left(x \cdot \nu_{\Omega}\right)\; \biggl(-\bigl| \nabla_{\partial \Omega} u_i \bigr|^2+\frac{\frac{1}{2}+\lambda}{\frac{1}{2}-\lambda}(\nu_{\Omega} \cdot \nabla u_i)^2\biggr)\, d\sigma_x=0.
\end{equation}
Now, provided that 
\begin{equation}\label{RPL.eq2}
\int_{\partial \Omega}\left(x \cdot \nu_{\Omega}\right)\bigl| \nabla u_i \bigr|^2\, d\sigma_x\neq 0,
\end{equation}
(note that $| \nabla u_i |^2=| \nabla_{\partial \Omega} u_i |^2+(\nu_{\Omega} \cdot \nabla  u_i)^2$) we can use \eqref{RPL.eq1} to derive an expression for $\lambda$. We obtain
\begin{equation}\label{RPL.eq3}
\lambda=\frac{1}{2}
\frac{\displaystyle\int_{\partial \Omega}\left(x \cdot \nu_{\Omega}\right)\; \biggl(\bigl| \nabla_{\partial \Omega} u_i \bigr|^2-(\nu_{\Omega} \cdot \nabla  u_i)^2\biggr)\, d\sigma_x}
{\displaystyle\int_{\partial \Omega}\left(x \cdot \nu_{\Omega}\right)\bigl| \nabla u_i \bigr|^2\, d\sigma_x}.
\end{equation}
We observe that condition \eqref{RPL.eq2} is certainly satisfied when $\Omega$ is a star-shaped domain with respect to the origin,  a case where $x\cdot \nu_\Omega\ge 0$ on $\partial\Omega$ and $x\cdot \nu_\Omega> 0$ on a subset of $\partial\Omega$ with positive measure (see, e.g., Pucci and Serrin \cite{PuSe86}). Furthermore, we note that $\nabla u_i=0$ on $\partial\Omega$ only when $u_i$ is constant and $\lambda=1/2$, an eigenvalue of multiplicity one, which does not depend on the shape of $\Omega$ (see, e.g., \cite[\S6.6]{DaLaMu21}). 

We may derive variations of \eqref{RPL.eq3} by using other symmetries of the operator $K^*_\phi$: Since $K^*_\phi$ is invariant under translations, for instance, in the direction of $\zeta\in\mathbb{R}^n$, we can derive a similar formula with $\zeta\cdot\nu_\Omega$ substituting $x\cdot\nu_\Omega$. Similarly, given that $K^*_\phi$ is invariant under rotation, we can derive a corresponding formula with $(Zx)\cdot\nu_\Omega$ replacing $x\cdot\nu_\Omega$, where $Z$ is a skew-symmetric matrix. (For this second example, note that for any rotation matrix function $(-t_0,t_0)\ni t\mapsto R_t\in SO(n)$ with $R_0=I$, we have $\frac{d}{dt}(R_t)_{|t=0}=Z$, with $Z$ skew-symmetric. Then refer to Remark \ref{phit} to complete the argument.) Unlike the case of dilations, in the case of rotations and translations, we don't know if there are simple assumptions that ensure the validity of the corresponding condition \eqref{RPL.eq2}.

\subsection{The sphere is critical for $\Lambda_1^{d_k}$}

Let us denote by $\mathbb{S}_{n-1}$ the boundary of the unit ball $\mathbb{B}_n$ in $\mathbb{R}^n$. For $n = 2$, $K^*_{\mathbb{S}_1}$ has only one eigenvalue, $\lambda = 1/2$, which has multiplicity one, and the corresponding eigenfunction is constant. For $n \ge 3$, instead, we can verify that the eigenvalues of $K^*_{\mathbb{S}_{n-1}}$ are given by the numbers
\[
\lambda_k := \frac{1}{2} \frac{n-2}{2k+n-2}, \quad \text{with } k \text{ ranging in } \mathbb{N},
\]
and the eigenspace of $\lambda_k$ is the space $H_k$ of (real)  spherical harmonics of degree $k$, so that $\lambda_k$ has multiplicity
\[
d_k := \mathrm{dim}\,H_k\,.
\]

These are all well-known facts, but for the sake of completeness--and due to the lack of a good reference--we include here a proof. Let $\{Y_{k,i}\}_{i=1}^{d_k}$ be an orthonormal basis for $H_k$, which we assume, for convenience, consists of real functions. We denote by 
\[
P_{k,i}(x) := |x|^k Y_{k,i}\left(\frac{x}{|x|}\right) \quad \text{if } x \in \mathbb{R}^n \setminus \{0\}, \quad P_{k,i}(0) := 0
\]
the homogeneous harmonic polynomial of degree $k$ associated with $Y_{k,i}$, and by 
\[
\tilde{P}_{k,i}(x) := |x|^{-(k+n-2)} Y_{k,i}\left(\frac{x}{|x|}\right) \quad \text{for all } x \in \mathbb{R}^n \setminus \{0\}
\]
its Kelvin transform, which is harmonic in $\mathbb{R}^n \setminus \{0\}$. We readily see that on $\mathbb{S}_{n-1}$, we have $P_{k,i} = \tilde{P}_{k,i}$ and 
\[
\nu_{\mathbb{B}_n} \cdot \nabla P_{k,i} - \nu_{\mathbb{B}_n} \cdot \nabla \tilde{P}_{k,i} = (2k+n-2)Y_{k,i}.
\]
Then, a standard potential theory argument shows that the single layer potential with density $(2k+n-2)Y_{k,i}$ coincides with $P_{k,i}$ on $\overline{\mathbb{B}_n}$ and with $\tilde{P}_{k,i}$ on $\mathbb{R}^n \setminus \mathbb{B}_n$ (cf., e.g., \cite{KhPuSh07}). In particular, we have
\begin{equation}\label{SY=Y}
S^+_{\mathbb{S}_{n-1}}[(2k+n-2)Y_{k,i}] = P_{k,i} \quad \text{on } \overline{\mathbb{B}_n},
\end{equation}
which implies that 
\[
\nu_{\mathbb{B}_n} \cdot \nabla S^+_{\mathbb{S}_{n-1}}[(2k+n-2)Y_{k,i}] = kY_{k,i}.
\]
By the jump properties of the single-layer potential, we deduce that 
\[
\frac{1}{2}(2k+n-2)Y_{k,i}-K^*_{\mathbb{S}_{n-1}}[(2k+n-2)Y_{k,i}] = kY_{k,i},
\]
and thus 
\[
K^*_{\mathbb{S}_{n-1}}[Y_{k,i}] = \frac{1}{2} \frac{n-2}{2k+n-2} Y_{k,i}.
\]
So, each $Y_{k,i}$ is an eigenfunction of $K^*_{\mathbb{S}_{n-1}}$ for the eigenvalue $\lambda_k$, and since $\bigcup_{k=0}^\infty \{Y_{k,i}\}_{i=1}^{d_k}$ is an $L^2(\mathbb{S}_{n-1})$-complete orthonormal system, the claim is proved.

Let us now denote by $\lambda_{k,i}$, with $i=1, \ldots, d_k$, the eigenvalues that spread from $\lambda_k$ as in Theorem $\ref{main}$. We will extend to the $n$-dimensional case a result that Ando et al.~\cite[Theorem 1.1]{AnKaMiUs19}  have proven for the $3$-dimensional case. This theorem states that the sphere is a critical shape for the sum 
\[
\sum_{i=1}^{d_k} \lambda_{k,i}.
\]
In other words, the shape derivative of this sum, computed on the sphere, is zero in all directions $\theta \in (C^{1,\alpha}(\mathbb{S}_{n-1}))^n$:
\[
\frac{d}{dt}\sum_{i=1}^{d_k} \lambda_{k,i}(I+t\theta)_{|t=0} = 0.
\]

Since the sum above coincides with the symmetric function $\Lambda_1^{d_k}(\lambda_{k,1}, \ldots, \lambda_{k,d_k})$, we can use Theorem \ref{dLambda}, where, on the right-hand side of \eqref{dLambda.eq1}, we can take $u_i = P_{k,i}$. From \eqref{SY=Y} and the orthonormality of the spherical harmonics, it follows that all such functions $u_i = P_{k,i}$ share the same $\|\cdot\|_{S_{\partial\Omega}}$ norm and are $\langle\cdot,\cdot\rangle_{S_{\partial\Omega}}$-orthogonal (the norm being $(2k+n-2)$ instead of $1$, but  we don't need to worry as long as we are proving that the derivative is zero).

 So, our goal is now to prove that 
\[
\sum_{i=1}^{d_k} \int_{\mathbb{S}_{n-1}} \left(\theta \cdot \nu_{\mathbb{B}_n}\right) \left( -\left| \nabla_{\partial \Omega} Y_{k,i} \right|^2 + \frac{\frac{1}{2}+\lambda_k}{\frac{1}{2}-\lambda_k} (\nu_{\Omega} \cdot \nabla P_{k,i})^2 \right) \, d\sigma = 0,
\]
or, equivalently,
\begin{equation}\label{ballgoal}
(2\lambda_k - 1) \sum_{i=1}^{d_k} \left| \nabla_{\partial \Omega} Y_{k,i} \right|^2 + (2\lambda_k + 1) \sum_{i=1}^{d_k} (\nu_{\Omega} \cdot \nabla P_{k,i})^2 = 0.
\end{equation}

We will use two known facts about spherical harmonics. One is the Uns\"old Theorem, which states that 
\begin{equation}\label{addf}
\sum_{i=1}^{d_k} {Y_{k,i}}^2 = \frac{d_k}{\omega_n},
\end{equation}
where $\omega_n$ is the $(n-1)$-dimensional measure of $\mathbb{S}_{n-1}$ (see, e.g., Folland \cite[Theorem 2.57]{Fo95}). The exact value $d_k/\omega_n$ is not crucial for our argument, as long as the sum in \eqref{addf} remains constant, which could be proved directly using the rotation invariance of the Laplace operator. The other fact is that 
\[
\Delta_{\mathbb{S}_{n-1}}Y_{k,i}=-k(k+n-2)Y_{k,i}.
\]

Using this second fact, we compute
\begin{align*}
\Delta_{\mathbb{S}_{n-1}}\sum_{i=1}^{d_k} {Y_{k,i}}^2 &= 2\sum_{i=1}^{d_k} Y_{k,i}\Delta_{\mathbb{S}_{n-1}}Y_{k,i} + 2\sum_{i=1}^{d_k} |\nabla_{\mathbb{S}_{n-1}}Y_{k,i}|^2 \\
&= -2k(k+n-2)\sum_{i=1}^{d_k} {Y_{k,i}}^2 + 2\sum_{i=1}^{d_k} |\nabla_{\mathbb{S}_{n-1}}Y_{k,i}|^2,
\end{align*}
while by \eqref{addf} we have
\[
\Delta_{\mathbb{S}_{n-1}}\sum_{i=1}^{d_k} {Y_{k,i}}^2 = \Delta_{\mathbb{S}_{n-1}} \left(\frac{d_k}{\omega_n}\right) = 0.
\]
Combining the last two equalities, we obtain 
\begin{equation}\label{addfnabla}
\sum_{i=1}^{d_k} |\nabla_{\mathbb{S}_{n-1}}Y_{k,i}|^2 = k(k+n-2) \sum_{i=1}^{d_k} {Y_{k,i}}^2.
\end{equation}
Since we clearly have $\nu_{\mathbb{B}_n} \cdot \nabla P_{k,i}=kY_{k,i}$, it follows that
\begin{equation}\label{addfnunabla}
\sum_{i=1}^{d_k} (\nu_{\mathbb{B}_n} \cdot \nabla P_{k,i})^2 = k^2 \sum_{i=1}^{d_k} {Y_{k,i}}^2\,.
\end{equation}
It now suffices to plug \eqref{addfnabla} and \eqref{addfnunabla} into the left-hand side of \eqref{ballgoal} and verify the equality by a straightforward computation. Alternatively, we can avoid the final computation by using \eqref{RPL.eq3}, which, in the case of the sphere, can be written as:
\[
\lambda_k \int_{\mathbb{S}_{n-1}} \left| \nabla_{\mathbb{S}_{n-1}} Y_{k,i} \right|^2 + (\nu_{\mathbb{B}_n} \cdot \nabla P_{k,i})^2\, d\sigma - \frac{1}{2} \int_{\mathbb{S}_{n-1}} \left| \nabla_{\mathbb{S}_{n-1}} Y_{k,i} \right|^2 - (\nu_{\mathbb{B}_n} \cdot \nabla P_{k,i})^2\, d\sigma = 0.
\]
That is,
\[
\int_{\mathbb{S}_{n-1}} \left( 2\lambda_k - 1 \right) \left| \nabla_{\mathbb{S}_{n-1}} Y_{k,i} \right|^2 + \left( 2\lambda_k + 1 \right) (\nu_{\mathbb{B}_n} \cdot \nabla P_{k,i})^2 \, d\sigma = 0.
\]
Then, summing over $i$, we obtain
\[
\int_{\mathbb{S}_{n-1}} \left( 2\lambda_k - 1 \right) \sum_{i=1}^{d_k} \left| \nabla_{\mathbb{S}_{n-1}} Y_{k,i} \right|^2 + \left( 2\lambda_k + 1 \right) \sum_{i=1}^{d_k} (\nu_{\mathbb{B}_n} \cdot \nabla P_{k,i})^2 \; d\sigma = 0.
\]
Knowing from \eqref{addfnabla} and \eqref{addfnunabla} that $\sum_{i=1}^{d_k} \left| \nabla_{\partial \Omega} Y_{k,i} \right|^2$ and $\sum_{i=1}^{d_k} (\nu_{\Omega} \cdot \nabla P_{k,i})^2$ are constant (see also \eqref{addf}), we conclude that \eqref{ballgoal} holds true.

\begin{remark}\label{criticality} The criticality of the sphere for the sum of the eigenvalues of order \(k\) suggests that extending the \(1/2\)-conjecture from the case of dimension \(n = 3\) to any dimension \(n \geq 3\) might be plausible. Specifically, we might put forward the proposition that for any domain \(\Omega\) obtained by perturbing a ball, there exist \(d_k\) NP-eigenvalues (counting multiplicities)  that are smaller than $1/2$ and whose sum equals
\[
\frac{d_k}{2} \frac{n-2}{2k+n-2} = \frac{1}{2}\binom{k+n-3}{k}.
\]
In particular, for \(k = 1\), there would be \(n\) NP-eigenvalues, smaller than $1/2$, whose sum is $(n-2)/2$. Since the second NP-eigenvalue of the sphere equals
\[
\frac{n-2}{2n},
\]
has multiplicity \(n\), and any other set of \(n\) numbers whose sum is \(\frac{n-2}{2}\) must contain a number greater than or equal to \(\frac{n-2}{2n}\), it would follow that the sphere minimizes the second NP-eigenvalue for any $n\ge 3$.
\end{remark}

\subsection*{Acknowledgment}

The authors are members of the `Gruppo Nazionale per l'Analisi Matematica, la Probabilit\`a e le loro Applicazioni' (GNAMPA) of the `Istituto Nazionale di Alta Matematica' (INdAM). The authors acknowledge the support  of the
project funded by the EuropeanUnion - NextGenerationEU under the National Recovery and
Resilience Plan (NRRP), Mission 4 Component 2 Investment 1.1 - Call PRIN 2022 No. 104 of
February 2, 2022 of Italian Ministry of University and Research; Project 2022SENJZ3 (subject area: PE - Physical Sciences and Engineering) `Perturbation problems and asymptotics for elliptic differential equations: variational and potential theoretic methods'.

\newpage

\begin{appendix}

\section{Certain singular boundary integrals}
We present here some results concerning singular integrals that are associated with the boundary behavior of the gradient of the double layer. These results are classical and likely familiar to readers with some knowledge of potential theory. However, locating a reliable reference for the proofs can be challenging. Hence, we opt to provide thorough justifications for our statements.

As in the main body of the paper, $\Omega$ remains a fixed bounded open subset of $\mathbb{R}^n$ of class $C^{1,\alpha}$, where $n\geq 2$ and $0<\alpha<1$. In this Appendix, however, we do not need to assume that $\Omega$ and the exterior domain $\mathbb{R}^n\setminus\overline\Omega$ are connected. In what follows $M_{\partial\Omega,ij}$ denotes the tangential operator defined by
\[
M_{\partial\Omega,ij} f(x):=\nu_{\Omega,i}(x)\partial_{x_j}f(x)-\nu_{\Omega,j}(x)\partial_{x_i}f(x)
\]
for all $i,j=1,\dots,n$, all functions $f\in C^1(\partial\Omega)$, and all $x\in\partial\Omega$. On the right-hand side, we use the same symbol $f$ for a $C^1$ extension of $f$ to $\mathbb{R}^n$. $M_{\partial\Omega,ij}$ is related to the tangential gradient $\nabla_{\partial\Omega}$ through the following equalities:
\begin{equation}\label{Mtangential}
M_{\partial\Omega,ij}=\nu_{\Omega,i}\nabla_{\partial\Omega,j}-\nu_{\Omega,j}\nabla_{\partial\Omega,i}\,,\qquad
\nabla_{\partial\Omega,j}=\sum_{i=1}^n\nu_{\Omega,i}M_{\partial\Omega,ij}\,.
\end{equation}
It is well known that for all functions $f,g\in C^1(\partial\Omega)$ we have
\begin{equation}\label{theMflip}
\int_{\partial\Omega}f(M_{\partial\Omega,ij} g)\,d\sigma=-\int_{\partial\Omega}(M_{\partial\Omega,ij} f)g\,d\sigma
\end{equation}
(see, e.g., \cite[Lemma 2.86]{DaLaMu21} for a proof). We begin with the following

\begin{lemma}\label{theMshuffle}  Let $\eta\in C^{1,\alpha}(\partial\Omega)$ and $\theta\in (C^{1,\alpha}(\partial\Omega))^n$. Then for all $x\in\partial\Omega$ we have the following equalities:
\begin{align}
\nonumber
&\int_{\partial\Omega}^*(\eta(y)-\eta(x))\;\theta(x)^\intercal\nabla^2 E_n(x-y)\nu_\Omega(y)\,d\sigma_y\\
\label{theMshuffle.eq1} 
&\qquad=-\int_{\partial\Omega}^*\sum_{i,j=1}^nM_{\partial\Omega,ij,y}\left[(\eta(y)-\eta(x))\theta_i(x)\right]\partial_{x_j} E_n(x-y)\,d\sigma_y\,,\\
\nonumber
&\int_{\partial\Omega}^*(\eta(y)-\eta(x))\theta(y)^\intercal\nabla^2 E_n(x-y)\nu_\Omega(y)\,d\sigma_y\\
\label{theMshuffle.eq2}
&\qquad=-\int_{\partial\Omega}^*\sum_{i,j=1}^nM_{\partial\Omega,ij,y}\left[(\eta(y)-\eta(x))\theta_i(y)\right]\partial_{x_j} E_n(x-y)\,d\sigma_y\,.
\end{align}  
\end{lemma}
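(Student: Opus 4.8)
\emph{Strategy.} I would prove \eqref{theMshuffle.eq1} and \eqref{theMshuffle.eq2} together, deriving both from one pointwise kernel identity followed by a single integration by parts in the $y$-variable performed through the ``$M$-flip'' relation \eqref{theMflip}; the only difference between the two formulas is whether the scalar multiplying the kernel is the frozen value $\theta_i(x)$ or the variable value $\theta_i(y)$, which is irrelevant in both steps. I describe the argument for \eqref{theMshuffle.eq1}. \emph{Step 1 (a pointwise identity).} Fix $x\in\partial\Omega$. I would first check that, for all $y\in\partial\Omega\setminus\{x\}$,
\[
\theta(x)^\intercal\nabla^2E_n(x-y)\,\nu_\Omega(y)=\sum_{i,j=1}^n\theta_i(x)\,M_{\partial\Omega,ij,y}\bigl[\partial_{x_j}E_n(x-y)\bigr],
\]
and likewise with $\theta_i(y)$ in place of $\theta_i(x)$ (in Step 1 this coefficient is just a scalar factor, untouched by $M_{\partial\Omega,ij,y}$). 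This is a direct computation with $E_n$: from $\partial_{x_j}E_n(x-y)=-\partial_{y_j}E_n(x-y)$ one gets $\partial_{y_i}\bigl[\partial_{x_j}E_n(x-y)\bigr]=-\partial_{x_i}\partial_{x_j}E_n(x-y)$, while harmonicity of $E_n$ away from the origin gives $\sum_j\partial_{y_j}\bigl[\partial_{x_j}E_n(x-y)\bigr]=0$; substituting into the definition of $M_{\partial\Omega,ij,y}$ and summing over $j$ yields $\sum_j M_{\partial\Omega,ij,y}\bigl[\partial_{x_j}E_n(x-y)\bigr]=\sum_j\nu_{\Omega,j}(y)\,\partial_{x_i}\partial_{x_j}E_n(x-y)$, and then multiplying by $\theta_i(x)$ and summing over $i$ gives the identity.

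\emph{Step 2 (integration by parts).} Multiplying the Step 1 identity by $(\eta(y)-\eta(x))$ and integrating, the task is to transfer $M_{\partial\Omega,ij,y}$ off the kernel $\partial_{x_j}E_n(x-y)$ and onto the $C^{1,\alpha}$ factor $(\eta(y)-\eta(x))\theta_i(x)$, which would be immediate from \eqref{theMflip} --- equivalently, from $\int_{\partial\Omega}M_{\partial\Omega,ij}h\,d\sigma=0$ together with the Leibniz rule $M_{\partial\Omega,ij}(fg)=fM_{\partial\Omega,ij}g+gM_{\partial\Omega,ij}f$ --- if the kernel were smooth. Since $\partial_{x_j}E_n(x-y)$ has a singularity of order $|x-y|^{1-n}$ at $y=x$, I would excise $\Sigma_\varepsilon:=\partial\Omega\setminus B(x,\varepsilon)$ and apply the $C^1$ version of \eqref{theMflip} on $\Sigma_\varepsilon$; this produces a boundary integral over $\gamma_\varepsilon:=\partial\Omega\cap\partial B(x,\varepsilon)$ which, writing $M_{\partial\Omega,ij}h=\mathrm{div}_{\partial\Omega}\bigl(h\nu_{\Omega,i}e_j-h\nu_{\Omega,j}e_i\bigr)$ and invoking the tangential divergence theorem (the mean-curvature term drops because $(h\nu_{\Omega,i}e_j-h\nu_{\Omega,j}e_i)\cdot\nu_\Omega\equiv0$), equals $\int_{\gamma_\varepsilon}h\,(\nu_{\Omega,i}\mu_j-\nu_{\Omega,j}\mu_i)\,d\sigma'$, with $\mu$ the exterior unit conormal of $\Sigma_\varepsilon$ along $\gamma_\varepsilon$. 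Summing over $i,j$, using Step 1 on the integral that still carries $M_{\partial\Omega,ij,y}$ on the kernel, and letting $\varepsilon\to0$: this integral converges to the left-hand side of \eqref{theMshuffle.eq1}, the integral carrying $M_{\partial\Omega,ij,y}$ on the density converges to (minus) its right-hand side, and the identity follows provided the boundary contribution vanishes in the limit. Here the convergence of the principal-value integrals on both sides comes, as usual, from $|\eta(y)-\eta(x)|\le C|x-y|$ and the oddness of the leading singular term of $\nabla E_n$; and, since $\partial\Omega$ is only $C^{1,\alpha}$, the divergence-form identity for $M_{\partial\Omega,ij}$ on $\Sigma_\varepsilon$ is justified by smoothing $\partial\Omega$ or by reduction to local graph coordinates.

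\emph{Main obstacle.} The genuinely non-routine point is the vanishing, as $\varepsilon\to0$, of
\[
\sum_{i,j=1}^n\theta_i(x)\int_{\gamma_\varepsilon}(\eta(y)-\eta(x))\,\partial_{x_j}E_n(x-y)\,\bigl(\nu_{\Omega,i}(y)\mu_j(y)-\nu_{\Omega,j}(y)\mu_i(y)\bigr)\,d\sigma'_y.
\]
A crude size count ($|\eta(y)-\eta(x)|\le C\varepsilon$, $|\partial_{x_j}E_n(x-y)|\le C\varepsilon^{1-n}$, $\sigma'(\gamma_\varepsilon)\le C\varepsilon^{n-2}$) only bounds this by $O(1)$, so the vanishing is due to a parity cancellation: to leading order $\eta(y)-\eta(x)=\varepsilon\,\nabla\eta(x)\cdot\omega+o(\varepsilon)$, $\partial_{x_j}E_n(x-y)=\omega_j/(s_n\varepsilon^{n-1})+o(\varepsilon^{1-n})$ with $\omega:=(y-x)/|y-x|$, $\mu=-\omega+o(1)$, and $\gamma_\varepsilon$ is, up to $O(\varepsilon^{1+\alpha})$, the round sphere $\{\omega\in T_x\partial\Omega:|\omega|=1\}$, so the leading $\varepsilon^0$ part of the integral is a constant multiple of $\int_{S^{n-2}}(\nabla\eta(x)\cdot\omega)\,\omega_j\,(\nu_{\Omega,i}(x)\omega_j-\nu_{\Omega,j}(x)\omega_i)\,d\omega$, whose integrand is odd in $\omega$ and therefore integrates to zero; the $C^{1,\alpha}$ regularity of $\partial\Omega$, $\eta$ and $\theta$ makes all the remaining terms $O(\varepsilon^{\alpha})$. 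Finally, \eqref{theMshuffle.eq2} is obtained verbatim after replacing $(\eta(y)-\eta(x))\theta_i(x)$ by $(\eta(y)-\eta(x))\theta_i(y)$ throughout: this factor is still in $C^{1,\alpha}(\partial\Omega)$, so Step 2 applies unchanged, and Step 1 holds with $\theta_i(y)$ as noted.
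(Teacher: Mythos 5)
Your argument is correct, and its skeleton (the pointwise identity $\sum_j M_{\partial\Omega,ij,y}[\partial_{x_j}E_n(x-y)]=[\nabla^2E_n(x-y)\nu_\Omega(y)]_i$ from harmonicity, followed by transferring $M_{\partial\Omega,ij,y}$ onto the density via \eqref{theMflip}) is the same as the paper's; the signs check out. Where you genuinely diverge is in how the singularity at $y=x$ is handled. You integrate by parts directly on the punctured surface $\partial\Omega\setminus B(x,\varepsilon)$, which produces an $(n-2)$-dimensional boundary integral over $\gamma_\varepsilon=\partial\Omega\cap\partial B(x,\varepsilon)$ that you kill by the odd-moment cancellation on $S^{n-2}$ — correctly identified as the crux, since the crude bound is only $O(1)$. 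The paper instead never sees a codimension-two boundary term: it writes $\int_{\partial\Omega\setminus B(x,r)}=\int_{\partial(\Omega\cup B(x,r))}-\int_{\partial B(x,r)\setminus\Omega}$, applies the closed-surface identity \eqref{theMflip} on $\partial(\Omega\cup B(x,r))$ with no boundary contribution, and then shows that the two spherical-cap integrals (with the original kernel and with the transferred kernel) converge to the same finite value $\frac{n-1}{2n}\,\theta(x)\cdot\nabla\eta(x)$ via $\lim_{r\to0^+}r^{1-n}\int_{\partial B(x,r)\setminus\Omega}\nu\otimes\nu\,d\sigma=\frac{s_n}{2n}1_n$ (see \eqref{theMshuffle.eq6} and \eqref{theMshuffle.eq8}), so they cancel. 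The trade-off: your route is shorter and conceptually cleaner, but it leans on a tangential Stokes formula with boundary terms ($\int_{\Sigma_\varepsilon}M_{\partial\Omega,ij}h\,d\sigma=\int_{\gamma_\varepsilon}h(\nu_{\Omega,i}\mu_j-\nu_{\Omega,j}\mu_i)\,d\sigma'$) on a surface that is only $C^{1,\alpha}$, which you leave at the level of ``smooth or use graph coordinates'' — this is the one ingredient that would need to be written out carefully, since $\nu_\Omega$ is only $C^{0,\alpha}$ and the usual derivation of that formula symmetrizes second derivatives of $\nu_\Omega$. The paper's detour through the closed surface avoids this (it only needs the no-boundary version, for which it has a citation) at the cost of computing two explicit cap limits. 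Both proofs ultimately rest on the same two facts: the extra factor $|x-y|$ supplied by $\eta(y)-\eta(x)$, and a second-order moment computation over a (hemi)sphere. Your treatment of \eqref{theMshuffle.eq2} by replacing $\theta_i(x)$ with $\theta_i(y)$, noting that the difference contributes only higher-order terms near $x$, likewise matches the paper's handling.
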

\noindent Where the subscript $y$ of $M_{\partial\Omega,ij,y}$ signifies that the derivatives are taken with respect to the $y$ variable and multiplied by the $\nu_{\Omega}(y)$ components. 
\begin{proof} We recall that for a function $(x,y)\mapsto f(x,y)$ on $\partial\Omega\times\partial\Omega$, which is singular for $x=y$, the principal value integral is defined by 
\begin{equation}\label{intstar}
\int_{\partial\Omega}^* f(x,y)\,d\sigma_y=\lim_{r\to 0^+}\int_{\partial\Omega\setminus\mathbb{B}_n(x,r)}f(x,y)\,d\sigma_y\,.
\end{equation}
Then, we observe that 
\[
\int_{\partial\Omega\setminus\mathbb{B}_n(x,r)}f(x,y)\,d\sigma_y=\int_{\partial(\Omega\cup\mathbb{B}_n(x,r))}\tilde f(x,y)\,d\sigma_y-\int_{\partial\mathbb{B}_n(x,r)\setminus\Omega}\tilde f(x,y)\,d\sigma_y\,,
\]
where $\tilde f$ is an extension of $f$ to $\partial\Omega\times\mathbb{R}^n$. Thus, to analyze the expression on the left-hand side of \eqref{theMshuffle.eq1}, we can examine separately the integral over the boundary of $\Omega\cup\mathbb{B}_n(x,r)$ and the integral over $\partial\mathbb{B}_n(x,r)\setminus\Omega$. The advantage lies in the fact that on $\partial(\Omega\cup\mathbb{B}_n(x,r))$, the integrand function is free of singularities (indeed, it is of class $C^{1,\alpha}$), whereas on $\partial\mathbb{B}_n(x,r)\setminus\Omega$, we can leverage spherical symmetry. 

Without further ado, let us first consider the integral over $\partial\mathbb{B}_n(x,r)\setminus\Omega$. We compute:
\[
\begin{split}
&\int_{\partial\mathbb{B}_n(x,r)\setminus\Omega}(\eta(y)-\eta(x))\;\theta(x)^\intercal\nabla^2 E_n(x-y)\nu_{\mathbb{B}_n(x,r)}(y)\,d\sigma_y\\
&\quad =\frac{1}{s_n}\int_{\partial\mathbb{B}_n(x,r)\setminus\Omega} (\eta(y)-\eta(x))\left(-\frac{\theta(x)\cdot\nu_{\mathbb{B}_n(x,r)}(y)}{|x-y|^n}+n\frac{\theta(x)\cdot(x-y)(x-y)\cdot\nu_{\mathbb{B}_n(x,r)}(y)}{|x-y|^{n+2}}\right)\,d\sigma_y\,,
\end{split}
\]
where we use the same symbols, $\eta$ and $\theta$, for the $C^{1,\alpha}$ extensions of $\eta$ and $\theta$ to $\mathbb{R}^n$. We observe that for $y\in\partial\mathbb{B}_n(x,r)\setminus\Omega$ we have $|x-y|=r$ and $\nu_{\mathbb{B}_n(x,r)}(y)=(y-x)r^{-1}$. Then the expression in the right-hand side equals
\begin{equation}\label{theMshuffle.eq3}
\frac{n-1}{s_n}\int_{\partial\mathbb{B}_n(x,r)\setminus\Omega} (\eta(y)-\eta(x))\frac{\theta(x)\cdot\nu_{\mathbb{B}_n(x,r)}(y)}{r^n}\,d\sigma_y\,.
\end{equation}
Moreover, we have
\[
\eta(y)-\eta(x)=\nabla\eta(x)\cdot(y-x)+(\nabla\eta(\xi)-\nabla\eta(x))\cdot(y-x)
\]
for some $\xi$ in the segment that joins $x$ with $y$. So that
\[
\left|(\eta(y)-\eta(x))-\nabla\eta(x)\cdot(y-x)\right|\le C r^{1+\alpha}
\]
for some $C>0$, uniformly for $y\in\partial\mathbb{B}_n(x,r)$. Then the expression in \eqref{theMshuffle.eq3} equals
\begin{equation}\label{theMshuffle.eq4}
\begin{split}
&\frac{n-1}{s_n}\int_{\partial\mathbb{B}_n(x,r)\setminus\Omega} \nabla\eta(x)\cdot(y-x)\frac{\theta(x)\cdot\nu_{\mathbb{B}_n(x,r)}(y)}{r^n}\,d\sigma_y+O(r^\alpha)\\
&\qquad=\frac{n-1}{s_n}\nabla\eta(x)^\intercal\left(\frac{1}{r^{n-1}}\int_{\partial\mathbb{B}_n(x,r)\setminus\Omega}\nu_{\mathbb{B}_n(x,r)}\otimes\nu_{\mathbb{B}_n(x,r)}\,d\sigma\right)\theta(x)+O(r^\alpha)
\end{split}
\end{equation}
as $r\to 0^+$. Possibly applying a rotation, we orient ourselves in a Cartesian coordinate system where $x=0$ and $\nu_{\Omega}(x)$ aligns with the vector $e_n:= (0,\dots,0,1)$. Let $\partial\mathbb{B}^+_n(0,r)$ denote the intersection of $\partial\mathbb{B}_n(0,r)$ with the half-space $\mathbb{R}^n_+$ of vectors $x=(x_1,\dots,x_n)$ with $x_n>0$. Due to the regularity of $\Omega$, it follows that the symmetric difference between $\partial\mathbb{B}_n(0,r)\setminus\Omega$ and $\partial\mathbb{B}^+_n(0,r)$ has an area smaller than a constant times $r^{n-1+\alpha}$. Then we see that the expression in the right-hand side of \eqref{theMshuffle.eq4} equals
\[
\frac{n-1}{s_n}\nabla\eta(x)^\intercal\left(\frac{1}{r^{n-1}}\int_{\partial\mathbb{B}^+_n(0,r)}\nu_{\mathbb{B}_n(0,r)}\otimes\nu_{\mathbb{B}_n(0,r)}\,d\sigma_y\right)\theta(x)+O(r^\alpha)\\
\]
as $r\to 0^+$. We readily verify that 
\[
\frac{1}{r^{n-1}}\int_{\partial\mathbb{B}^+_n(0,r)}\nu_{\mathbb{B}_n(0,r)}\otimes\nu_{\mathbb{B}_n(0,r)}\,d\sigma=\int_{\mathbb{S}^+_{n-1}}\nu_{\mathbb{B}_n}\otimes\nu_{\mathbb{B}_n}\,d\sigma\,,
\]
where we understand that $\mathbb{B}_n:=\mathbb{B}_n(0,1)$ and  $\mathbb{S}^+_{n-1}:=\mathbb{S}_{n-1}\cap\mathbb{R}^n_+$ is the unit hemisphere. By a symmetry argument, we can prove that
\[
\int_{\mathbb{S}^+_{n-1}}\nu_{\mathbb{B}_n,i}\,\nu_{\mathbb{B}_n,j}\,d\sigma=0
\] 
whenever $i\neq j$.
 Then, using the standard parametrization $\mathbb{B}_{n-1}\ni {\xi}\mapsto ({\xi},\sqrt{1-|{\xi}|^2})\in \mathbb{S}^+_{n-1}$ of the unit hemisphere, we can compute that, for $i<n$, we have
\[
\begin{split}
&\int_{\mathbb{S}^+_{n-1}}(\nu_{\mathbb{B}_n,i})^2\,d\sigma\\
&\quad=\int_{\mathbb{B}_{n-1}}\frac{({\xi}_i)^2}{\sqrt{1-|{\xi}|^2}}\,d{\xi}
=\frac{1}{n-1}\int_{\mathbb{B}_{n-1}}\frac{|{\xi}|^2}{\sqrt{1-|{\xi}|^2}}\,d{\xi}
=\frac{s_{n-1}}{n-1}\int_0^1\frac{\rho^n}{\sqrt{1-\rho^2}}\,d\rho=\frac{s_n}{2n}
\end{split}
\] 
while, for $i=n$, we have
\[
\int_{\mathbb{S}^+_{n-1}}(\nu_{\mathbb{B}_n,n})^2\,d\sigma=\int_{\mathbb{B}_{n-1}}\sqrt{1-|{\xi}|^2}\,d{\xi}=s_{n-1}\int_0^1 \rho^{n-2}\sqrt{1-\rho^2}\,d\rho=\frac{s_n}{2n}\,.
\] 
(Both integrals can be computed using the well-known equality $\int_0^1t^{a-1}(1-t)^{b-1}dt=\Gamma(a)\Gamma(b)/\Gamma(a+b)$.)
This implies
\[
\int_{\mathbb{S}^+_{n-1}}\nu_{\mathbb{B}_n}\otimes\nu_{\mathbb{B}_n}\,d\sigma=\frac{s_n}{2n}\,1_n \,,
\]
where $1_n $ is the $n\times n$ identity matrix. Hence 
\begin{equation}\label{theMshuffle.eq5}
\lim_{r\to 0^+}\frac{1}{r^{n-1}}\int_{\partial\mathbb{B}_n(x,r)\setminus\Omega}\nu_{\mathbb{B}_n(x,r)}\otimes\nu_{\mathbb{B}_n(x,r)}\,d\sigma=\frac{s_n}{2n}\,1_n \,.
\end{equation}
By combining \eqref{theMshuffle.eq4} and \eqref{theMshuffle.eq5}, we conclude that
\begin{equation}\label{theMshuffle.eq6}
\lim_{r\to 0^+}\int_{\partial\mathbb{B}_n(x,r)\setminus\Omega}(\eta(y)-\eta(x))\;\theta(x)^\intercal\nabla^2 E_n(x-y)\nu_\Omega(y)\,d\sigma_y=\frac{n-1}{2n}\nabla\eta(x)\cdot \theta(x)\,.
\end{equation}

We now shift our focus to the integral over $\partial(\Omega\cup\mathbb{B}_n(x,r))$. For $x\neq y$, we have $\Delta_y E_n(x-y)=0$, from which we deduce that
\[
\begin{split}
&\sum_{j=1}^n M_{\partial(\Omega\cup\mathbb{B}_n(x,r)),ij,y}\partial_{y_j}E_n(x-y)\\
&\quad=\sum_{j=1}^n \nu_{(\Omega\cup\mathbb{B}_n(x,r)),i}(y)\Delta_y E_n(x-y)-\nu_{(\Omega\cup\mathbb{B}_n(0,r)),j}(y)\partial_{y_i}\partial_{y_j}E_n(x-y)\\
&\quad=-\nabla^2 E_n(x-y)\nu_{(\Omega\cup\mathbb{B}_n(x,r))}(y)\,.
\end{split}
\] 
Then
\begin{equation}\label{theMshuffle.eq7}
\begin{split}
&\int_{\partial(\Omega\cup\mathbb{B}_n(x,r))}(\eta(y)-\eta(x))\,\theta(x)^\intercal\nabla^2 E_n(x-y)\nu_{(\Omega\cup\mathbb{B}_n(x,r))}(y)\,d\sigma_y\\
&\quad= - \int_{\partial(\Omega\cup\mathbb{B}_n(x,r))}\sum_{i,j=1}^n(\eta(y)-\eta(x))\,\theta_i(x) M_{\partial(\Omega\cup\mathbb{B}_n(x,r)),ij,y}\partial_{y_j}E_n(x-y)\,d\sigma_y\\
&\quad= \int_{\partial(\Omega\cup\mathbb{B}_n(0,r))}\sum_{i,j=1}^nM_{\partial(\Omega\cup\mathbb{B}_n(x,r)),ij,y}\left[(\eta(y)-\eta(x))\theta_i(x)\right] \partial_{y_j}E_n(x-y)\,d\sigma_y\,,
\end{split}
\end{equation}
where the last equality follows from \eqref{theMflip}. We now split the integral over $\partial\mathbb{B}_n(x,r)\setminus\Omega$ from the integral over $\partial\Omega\setminus\mathbb{B}_n(x,r)$. We compute 
\[
\begin{split}
&\int_{\partial\mathbb{B}_n(x,r)\setminus\Omega}\sum_{i,j=1}^nM_{\partial\mathbb{B}_n(x,r),ij,y}\left[(\eta(y)-\eta(x))\theta_i(x)\right] \partial_{y_j}E_n(x-y)\,d\sigma_y\\
&\quad=\theta(x)^\intercal\left(\int_{\partial\mathbb{B}_n(x,r)\setminus\Omega}\nu_{\partial\mathbb{B}_n(x,r)}(y)\nabla\eta(y)\cdot\nabla_y E_n(x-y)-\nabla\eta(y)\nu_{\partial\mathbb{B}_n(x,r)}(y)\cdot\nabla_y E_n(x-y)\,d\sigma_y\right)\\
&\quad= \frac{1}{s_n}\theta(x)^\intercal\left(\frac{1}{r^{n-1}}\int_{\partial\mathbb{B}_n(x,r)\setminus\Omega}\left(-\nu_{\partial\mathbb{B}_n(x,r)}(y)\otimes\nu_{\partial\mathbb{B}_n(x,r)}(y)+1_n \right)\nabla\eta(y)\,d\sigma_y\right)\,.
\end{split}
\]
(Here, we used once more the fact that $\nu_{\partial\mathbb{B}_n(x,r)}=(y-x)|x-y|^{-1}$).
Since $|\nabla\eta(y)-\nabla\eta(x)|=O(r^\alpha)$ as $r\to 0^+$, we can verify that the last integral equals 
\[
\frac{1}{s_n}\theta(x)^\intercal\left(\frac{1}{r^{n-1}}\int_{\partial\mathbb{B}_n(x,r)\setminus\Omega}-\nu_{\partial\mathbb{B}_n(x,r)}\otimes\nu_{\partial\mathbb{B}_n(x,r)}+1_n \,d\sigma\right)\nabla\eta(x)+O(r^\alpha)
\]
as $r\to 0^+$. Then, we can refer back to \eqref{theMshuffle.eq5}, and we conclude that 
\begin{equation}\label{theMshuffle.eq8}
\begin{split}
&\lim_{r\to 0^+}\int_{\partial\mathbb{B}_n(x,r)\setminus\Omega}\sum_{i,j=1}^nM_{\partial\mathbb{B}_n(x,r),ij,y}\left[(\eta(y)-\eta(x))\theta_i(x)\right] \partial_{y_j}E_n(x-y)\,d\sigma_y\\
&\qquad =\left(-\frac{1}{2n}+\frac{1}{2}\right)\theta(x)\cdot\nabla\eta(x)=\frac{n-1}{2n}\;\theta(x)\cdot\nabla\eta(x)\,.
\end{split}
\end{equation}

By arguing as in \eqref{theMshuffle.eq7}, we have
\begin{equation}\label{theMshuffle.eq9}
\begin{split}
&\int_{\partial\Omega\setminus\mathbb{B}_n(x,r)}(\eta(y)-\eta(x))\,\theta(x)^\intercal\nabla^2 E_n(x-y)\nu_{(\Omega\cup\mathbb{B}_n(x,r))}(y)\,d\sigma_y\\
&\quad=\int_{\partial\Omega\setminus\mathbb{B}_n(x,r)}\sum_{i,j=1}^nM_{\partial\mathbb{B}_n(x,r),ij,y}\left[(\eta(y)-\eta(x))\theta_i(x)\right] \partial_{y_j}E_n(x-y)\,d\sigma_y\\
&\qquad+\int_{\partial\mathbb{B}_n(x,r)\setminus\Omega}\sum_{i,j=1}^nM_{\partial\mathbb{B}_n(x,r),ij,y}\left[(\eta(y)-\eta(x))\theta_i(x)\right] \partial_{y_j}E_n(x-y)\,d\sigma_y\\
&\qquad-\int_{\partial\mathbb{B}_n(x,r)\setminus\Omega}(\eta(y)-\eta(x))\,\theta(x)^\intercal\nabla^2 E_n(x-y)\nu_{(\Omega\cup\mathbb{B}_n(x,r))}(y)\,d\sigma_y\,.
\end{split}
\end{equation}
So, by taking the limit as $r\to 0^+$ in \eqref{theMshuffle.eq9} and recalling \eqref{theMshuffle.eq6} and \eqref{theMshuffle.eq8} we obtain \eqref{theMshuffle.eq1}.

The proof of \eqref{theMshuffle.eq2} can be derived from that of \eqref{theMshuffle.eq1}. In particular, we can prove an analogous equality to \eqref{theMshuffle.eq9} where $\theta(y)$ replaces $\theta(x)$. Namely, we have
\begin{equation}\label{theMshuffle.eq10}
\begin{split}
&\int_{\partial\Omega\setminus\mathbb{B}_n(x,r)}(\eta(y)-\eta(x))\,\theta(y)^\intercal\nabla^2 E_n(x-y)\nu_{(\Omega\cup\mathbb{B}_n(x,r))}(y)\,d\sigma_y\\
&\quad=\int_{\partial\Omega\setminus\mathbb{B}_n(x,r)}\sum_{i,j=1}^nM_{\partial\mathbb{B}_n(x,r),ij,y}\left[(\eta(y)-\eta(x))\theta_i(y)\right] \partial_{y_j}E_n(x-y)\,d\sigma_y\\
&\qquad+\int_{\partial\mathbb{B}_n(x,r)\setminus\Omega}\sum_{i,j=1}^nM_{\partial\mathbb{B}_n(x,r),ij,y}\left[(\eta(y)-\eta(x))\theta_i(y)\right] \partial_{y_j}E_n(x-y)\,d\sigma_y\\
&\qquad-\int_{\partial\mathbb{B}_n(x,r)\setminus\Omega}(\eta(y)-\eta(x))\,\theta(y)^\intercal\nabla^2 E_n(x-y)\nu_{(\Omega\cup\mathbb{B}_n(x,r))}(y)\,d\sigma_y\,.
\end{split}
\end{equation}
Since 
\[
\left|M_{\partial\mathbb{B}_n(x,r),ij,y}\left[(\eta(y)-\eta(x))(\theta_i(y)-\theta(x))\right]\right|<Cr
\]
and
\[
\left|(\eta(y)-\eta(x))(\theta(y)-\theta(x))\right|<Cr^2
\]
for some $C>0$, uniformly for $y\in\partial\mathbb{B}_n(x,r)\setminus\Omega$, we see that the limit of the last two integrals in \eqref{theMshuffle.eq10} is the same as the limit of the last two integrals in \eqref{theMshuffle.eq9}. Then, by \eqref{theMshuffle.eq6} and \eqref{theMshuffle.eq8}, we deduce \eqref{theMshuffle.eq2}. 
\end{proof}

We now leverage Lemma \ref{theMshuffle} and equality 
\begin{equation}\label{theMfiddle}
\partial_{x_i} D^{\pm}_{\partial\Omega}[\eta](x)=-\sum_{j=1}^n\partial_{x_j}S_{\partial\Omega}^{\pm}\left[M_{\partial\Omega,ij}[\eta]\right](x)\,,
\end{equation}
which holds for all $x\in\mathbb{R}^n\setminus\partial\Omega$ (cf. e.g., \cite[Lemma 4.29]{DaLaMu21}), to establish a relation between the integrals in \eqref{theMshuffle.eq1} and the boundary behavior of the gradient of the double-layer potential. We prove the following:

\begin{proposition}\label{thetangentialjump} Let $\eta\in C^{1,\alpha}(\partial\Omega)$ and $\theta\in (C^{1,\alpha}(\partial\Omega))^n$. Then for all $x\in\partial\Omega$ we have the following equalities:
\begin{align}
\nonumber
&\int_{\partial\Omega}^*(\eta(y)-\eta(x))\;\theta(x)^\intercal\nabla^2 E_n(x-y)\nu_\Omega(y)\,d\sigma_y\\
&\qquad=\theta(x)\cdot\nabla D^{\pm}_{\partial\Omega}[\eta](x)\mp\frac{1}{2}\theta(x)\cdot\nabla_{\partial\Omega}\eta(x)\label{thetangentialjump.eq1}\\
&\qquad=(\theta(x)\cdot\nu_\Omega(x))\;T_{\partial\Omega}[\eta](x)+\theta(x)\cdot\nabla_{\partial\Omega}K_{\partial\Omega}[\eta](x)\,.\label{thetangentialjump.eq2}
\end{align}
\end{proposition}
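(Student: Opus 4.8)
To prove Proposition \ref{thetangentialjump}, the plan is to obtain \eqref{thetangentialjump.eq1} as an almost immediate consequence of Lemma \ref{theMshuffle} combined with the jump formula \eqref{nablasinglejump} for the gradient of the single layer and the identity \eqref{theMfiddle}, and then to deduce \eqref{thetangentialjump.eq2} by a purely boundary manipulation. First I would note that in the integrand on the right-hand side of \eqref{theMshuffle.eq1} both $\eta(x)$ and $\theta_i(x)$ are constant in the integration variable $y$, and $M_{\partial\Omega,ij,y}$ annihilates constants, so that $M_{\partial\Omega,ij,y}[(\eta(y)-\eta(x))\theta_i(x)]=\theta_i(x)(M_{\partial\Omega,ij}\eta)(y)$. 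Hence the left-hand side of \eqref{thetangentialjump.eq1} becomes $-\sum_{i,j=1}^n\theta_i(x)\int_{\partial\Omega}^*(M_{\partial\Omega,ij}\eta)(y)\,\partial_{x_j}E_n(x-y)\,d\sigma_y$.

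The key observation is that each such principal-value integral is, by \eqref{nablasinglejump} (whose kernel $\nabla E_n(x-y)$ is component-wise $\partial_{x_j}E_n(x-y)$), equal to $\partial_{x_j}S^{\pm}_{\partial\Omega}[M_{\partial\Omega,ij}\eta](x)\mp\frac12\nu_{\Omega,j}(x)(M_{\partial\Omega,ij}\eta)(x)$. Summing over $j$ and invoking \eqref{theMfiddle}, the family of single-layer terms telescopes, $\sum_j\partial_{x_j}S^{\pm}_{\partial\Omega}[M_{\partial\Omega,ij}\eta](x)=-\partial_{x_i}D^{\pm}_{\partial\Omega}[\eta](x)$, which after contraction with $\theta_i(x)$ yields the term $\theta(x)\cdot\nabla D^{\pm}_{\partial\Omega}[\eta](x)$. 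For the remaining boundary term I would use the algebraic identity $\sum_j\nu_{\Omega,j}M_{\partial\Omega,ij}\eta=-\nabla_{\partial\Omega,i}\eta$, which follows from the antisymmetry $M_{\partial\Omega,ij}=-M_{\partial\Omega,ji}$ together with the second relation in \eqref{Mtangential}; contracting with $\pm\frac12\theta_i(x)$ produces exactly $\mp\frac12\,\theta(x)\cdot\nabla_{\partial\Omega}\eta(x)$, completing \eqref{thetangentialjump.eq1}.

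To pass from \eqref{thetangentialjump.eq1} to \eqref{thetangentialjump.eq2}, I would split $\theta=(\theta\cdot\nu_\Omega)\nu_\Omega+\theta_{\partial\Omega}$ with $\theta_{\partial\Omega}$ tangential. The normal part of $\theta\cdot\nabla D^{\pm}_{\partial\Omega}[\eta]$ is $(\theta\cdot\nu_\Omega)\,T_{\partial\Omega}[\eta]$ by the definition \eqref{Mr.T} of $T_{\partial\Omega}$ (the normal derivative of the double layer has no jump, so the sign is irrelevant here). For the tangential part, $\theta_{\partial\Omega}\cdot\nabla D^{\pm}_{\partial\Omega}[\eta]$ equals $\theta_{\partial\Omega}\cdot\nabla_{\partial\Omega}$ of the boundary trace $D^{\pm}_{\partial\Omega}[\eta]_{|\partial\Omega}=\pm\frac12\eta+K_{\partial\Omega}[\eta]$ coming from \eqref{jump}; since $\nabla_{\partial\Omega}\eta$ and $\nabla_{\partial\Omega}K_{\partial\Omega}[\eta]$ are tangential, $\theta_{\partial\Omega}$ may be replaced by $\theta$, and the resulting $\pm\frac12\,\theta\cdot\nabla_{\partial\Omega}\eta$ cancels against the $\mp\frac12\,\theta\cdot\nabla_{\partial\Omega}\eta$ in \eqref{thetangentialjump.eq1}, leaving $(\theta\cdot\nu_\Omega)\,T_{\partial\Omega}[\eta]+\theta\cdot\nabla_{\partial\Omega}K_{\partial\Omega}[\eta]$.

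I expect the main obstacle to be purely bookkeeping: keeping the $\pm$ signs consistent through the single-layer jump formula \eqref{nablasinglejump}, the identity \eqref{theMfiddle}, and the tangential/normal decomposition of $\theta$, together with the correct identification of $\partial_{x_j}E_n(x-y)$ with the $j$-th component of the kernel in \eqref{nablasinglejump}. No estimates or limiting arguments are needed here, since all the delicate principal-value analysis has already been carried out in Lemma \ref{theMshuffle}; as an internal check, the fact that the right-hand side of \eqref{thetangentialjump.eq2} is manifestly independent of the choice of sign is consistent with the sign-independence of the original principal-value integral on the left of \eqref{thetangentialjump.eq1}.
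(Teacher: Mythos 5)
Your proposal is correct and follows essentially the same route as the paper: equation \eqref{thetangentialjump.eq1} is obtained by combining Lemma \ref{theMshuffle} (after noting that $M_{\partial\Omega,ij,y}$ kills the constants $\eta(x)$, $\theta_i(x)$) with the identity \eqref{theMfiddle}, the single-layer jump formula \eqref{nablasinglejump}, and the relation $\sum_j\nu_{\Omega,j}M_{\partial\Omega,ij}=-\nabla_{\partial\Omega,i}$, the only cosmetic difference being that the paper runs the computation starting from $\theta(x)\cdot\nabla D^{\pm}_{\partial\Omega}[\eta]$ (as a boundary limit) rather than from the principal-value integral. The passage to \eqref{thetangentialjump.eq2} via the normal/tangential splitting of $\theta$, the trace jump formula \eqref{jump}, and the cancellation of the $\pm\tfrac12\,\theta\cdot\nabla_{\partial\Omega}\eta$ terms is exactly the paper's argument.
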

We understand that 
\[
\nabla D^{\pm}_{\partial\Omega}[\eta](x)=\lim_{\Omega^{\pm}\ni\xi\to x\in\partial\Omega}\nabla D^{\pm}_{\partial\Omega}[\eta](\xi)\qquad\text{for all $x\in\partial\Omega$,}
\] 
with $\Omega^+:=\Omega$ and $\Omega^-:=\mathbb{R}^n\setminus\overline{\Omega}$, bearing in mind that $D_{\partial\Omega}^{\pm}[\eta]\in C^{1,\alpha}(\overline{\Omega^{\pm}})$ for $\eta\in C^{1,\alpha}(\partial\Omega)$. Thus, formula \eqref{thetangentialjump.eq1} can be interpreted as a jump formula for the gradient of the double-layer potential. Specifically,
\begin{equation}\label{nablaDjump}
\nabla D^{\pm}_{\partial\Omega}[\eta](x)=\pm\frac{1}{2}\nabla_{\partial\Omega}\eta(x) +\int_{\partial\Omega}^*(\eta(y)-\eta(x))\;\nabla^2 E_n(x-y)\nu_\Omega(y)\,d\sigma_y 
\end{equation}
for all $x\in\partial\Omega$. 
\begin{proof}[Proof of Proposition \ref{thetangentialjump}]
Utilizing equality \eqref{theMfiddle} and employing the jump properties of the derivatives of the single layer potential \eqref{nablasinglejump}, we compute
\begin{equation}\label{thetangentialjump.eq3}
\begin{split}
&\lim_{\Omega^{\pm}\ni\xi\to x\in\partial\Omega}\theta(x)\cdot\nabla D_{\partial\Omega}^{\pm}[\eta](\xi)=\lim_{\Omega^{\pm}\ni\xi\to x\in\partial\Omega}-\sum_{i,j=1}^n\theta_i(x)\partial_{x_j}S_{\partial\Omega}^{\pm}\left[M_{\partial\Omega,ij}\eta\right](\xi)\\
&\qquad =\mp \sum_{i,j=1}^n\theta_i(x)\frac{\nu_j(x)}{2}(M_{\partial\Omega,ij}\eta)(x)-\sum_{i=1}^n\theta_i(x)\int_{\partial\Omega}^*\sum_{j=1}^n(M_{\partial\Omega,ij}\eta)(y)\,\partial_{x_j}E_n(x-y)\,d\sigma_y\\
&\qquad =\pm\frac{1}{2}\theta(x)\cdot\nabla_{\partial\Omega}\eta(x)-\int_{\partial\Omega}^*\sum_{i,j=1}^n\theta_i(x) (M_{\partial\Omega,ij}\eta)(y)\,\partial_{x_j}E_n(x-y)\,d\sigma_y\,,
\end{split}
\end{equation}
where, in the last equality, we also used \eqref{Mtangential}. Since
\[
\theta_i(x)(M_{\partial\Omega,ij}\eta)(y)=M_{\partial\Omega,ij,y}[(\eta(y)-\eta(x))\theta_i(x)]\quad\text{for all $x,y\in\partial\Omega$\,,}
\]
the validity of \eqref{thetangentialjump.eq1} follows by \eqref{theMshuffle.eq1} and \eqref{thetangentialjump.eq3}.

Next, we observe that 
\begin{equation}\label{thetangentialjump.eq4}
\theta(x)\cdot\nabla D_{\partial\Omega}^{\pm}[\eta](x)=(\theta(x)\cdot\nu_\Omega(x))\nu_\Omega(x)\cdot\nabla D_{\partial\Omega}^{\pm}[\eta](x)+\theta_{\partial\Omega}(x)\cdot\nabla D_{\partial\Omega}^{\pm}[\eta](x)\,,
\end{equation}
where $\theta_{\partial\Omega}:=\theta-\nu_\Omega\otimes\nu_\Omega\; \theta$ is the tangential component of $\theta$. Since $\nabla_{\partial\Omega} D_{\partial\Omega}^{\pm}[\eta]$ only depends on the trace of $D_{\partial\Omega}^{\pm}[\eta]$ on $\partial\Omega$, we have  
\[
\begin{split}
\theta_{\partial\Omega}(x)\cdot\nabla D_{\partial\Omega}^{\pm}[\eta](x)&=\theta(x)\cdot\nabla_{\partial\Omega} D_{\partial\Omega}^{\pm}[\eta](x)\\
&=\theta(x)\cdot\nabla_{\partial\Omega} \left(\pm\frac{1}{2}\eta(x)+K_{\partial\Omega}[\eta](x)\right)\\
&=\pm\frac{1}{2}\theta(x)\cdot\nabla_{\partial\Omega}\eta(x)+\theta(x)\cdot\nabla_{\partial\Omega}K_{\partial\Omega}[\eta](x)\,,
\end{split}
\]
which, combined with \eqref{thetangentialjump.eq1} and \eqref{thetangentialjump.eq4}, gives \eqref{thetangentialjump.eq2}.
\end{proof}

In the paper, we also use the following lemma, which can be verified by expanding the term $\sum_{i,j=1}^nM_{\partial\Omega,ij,y}\left[(\eta(y)-\eta(x))\theta_i(y)\right]$ appearing in the integral on the right-hand side of \eqref{theMshuffle.eq2} with the help of \eqref{Mtangential}.

\begin{lemma}\label{thesistersact} For $\eta\in C^{1,\alpha}(\partial\Omega)$ and $\theta\in (C^{1,\alpha}(\partial\Omega))^n$, we have
\begin{equation}\label{thesistersact.eq1}
\begin{split}
&\int_{\partial\Omega}^*(\eta(y)-\eta(x))\theta(y)^\intercal\nabla^2 E_n(x-y)\nu_\Omega(y)\,d\sigma_y\\
&\qquad=-\int_{\partial\Omega}^*\theta(y)\cdot\nu_\Omega(y)\;(\nabla_{\partial\Omega}\eta(y))\cdot \nabla E_n(x-y)\,d\sigma_y\\
&\qquad\quad+\int_{\partial\Omega}\theta(y)\cdot(\nabla_{\partial\Omega}\eta(y))\;\nu_\Omega(y)\cdot\nabla E_n(x-y)\,d\sigma_y\\
&\qquad\quad-\int^*_{\partial\Omega}(\eta(y)-\eta(x))\left[(\nabla_{\partial\Omega}\theta(y))^\intercal \nabla E_n(x-y)\right]\cdot\nu_\Omega(y)\,d\sigma_y\\
&\qquad\quad+\int_{\partial\Omega}(\eta(y)-\eta(x))(\mathrm{div}_{\partial\Omega}\theta)(y)\,\nu_\Omega(y)\cdot\nabla E_n(x-y)\,d\sigma_y
\end{split}
\end{equation}
for all $x\in\partial\Omega$.
\end{lemma}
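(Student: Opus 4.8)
The proof will be a direct computation, guided by the remark preceding the statement. The plan is to start from the second identity of Lemma~\ref{theMshuffle}, that is \eqref{theMshuffle.eq2}, and to expand the integrand on its right-hand side. Since each $M_{\partial\Omega,ij,y}$ is a first-order tangential operator it obeys the Leibniz rule, and since $\eta(x)$ does not depend on $y$ we have $M_{\partial\Omega,ij,y}[\eta(y)-\eta(x)]=(M_{\partial\Omega,ij}\eta)(y)$; hence
\[
M_{\partial\Omega,ij,y}\bigl[(\eta(y)-\eta(x))\theta_i(y)\bigr]=(M_{\partial\Omega,ij}\eta)(y)\,\theta_i(y)+(\eta(y)-\eta(x))\,(M_{\partial\Omega,ij}\theta_i)(y).
\]
Substituting this into \eqref{theMshuffle.eq2} splits the right-hand side into a ``first group'' involving $(M_{\partial\Omega,ij}\eta)\,\theta_i$ and a ``second group'' involving $(\eta(y)-\eta(x))\,M_{\partial\Omega,ij}\theta_i$, which will produce, respectively, the first two and the last two terms on the right-hand side of \eqref{thesistersact.eq1}.

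For the first group I would use the identity $M_{\partial\Omega,ij}=\nu_{\Omega,i}\nabla_{\partial\Omega,j}-\nu_{\Omega,j}\nabla_{\partial\Omega,i}$ from \eqref{Mtangential}; carrying out the double sum one obtains
\[
\sum_{i,j=1}^n(M_{\partial\Omega,ij}\eta)(y)\,\theta_i(y)\,\partial_{x_j}E_n(x-y)=(\theta\cdot\nu_\Omega)\,(\nabla_{\partial\Omega}\eta)\cdot\nabla E_n(x-y)-(\theta\cdot\nabla_{\partial\Omega}\eta)\,\nu_\Omega\cdot\nabla E_n(x-y),
\]
all coefficients being evaluated at $y$. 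For the second group, the same substitution together with $\sum_i\nabla_{\partial\Omega,i}\theta_i=\mathrm{div}_{\partial\Omega}\theta$ gives $\sum_iM_{\partial\Omega,ij}\theta_i=\sum_i\nu_{\Omega,i}\nabla_{\partial\Omega,j}\theta_i-\nu_{\Omega,j}\,\mathrm{div}_{\partial\Omega}\theta$, and then, invoking the elementary identity $(Au)\cdot v=u\cdot(A^\intercal v)$ with $A=\nabla_{\partial\Omega}\theta$ and recalling the convention $(\nabla_{\partial\Omega}\theta)_{ij}=\nabla_{\partial\Omega,i}\theta_j$, one identifies $\sum_{i,j}\nu_{\Omega,i}(\nabla_{\partial\Omega,j}\theta_i)\partial_{x_j}E_n(x-y)=\bigl[(\nabla_{\partial\Omega}\theta)^\intercal\nabla E_n(x-y)\bigr]\cdot\nu_\Omega$. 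Collecting the two groups and re-inserting the overall minus sign of \eqref{theMshuffle.eq2} then yields precisely the four terms on the right-hand side of \eqref{thesistersact.eq1}.

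The last point to check is that some of these integrals may be written without the principal value symbol, as in the statement. This is where the $C^{1,\alpha}$ regularity of $\partial\Omega$ enters: since $\nu_\Omega(y)\cdot\nabla E_n(x-y)=-\frac{1}{s_n}\,\nu_\Omega(y)\cdot(x-y)\,|x-y|^{-n}=O(|x-y|^{1+\alpha-n})$ as $y\to x$, the kernels $\theta\cdot(\nabla_{\partial\Omega}\eta)\,\nu_\Omega\cdot\nabla E_n(x-y)$ and $(\eta(y)-\eta(x))(\mathrm{div}_{\partial\Omega}\theta)\,\nu_\Omega\cdot\nabla E_n(x-y)$ are absolutely integrable over $\partial\Omega$, so that removing them from a principal value leaves their ordinary integral (by passing to the limit $r\to0^+$ in $\int_{\partial\Omega\setminus\mathbb{B}_n(x,r)}$). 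I do not expect a serious obstacle here: once the Leibniz expansion is written down the argument is purely algebraic, and the only thing requiring care is keeping the index placement consistent with the transpose convention for $\nabla_{\partial\Omega}\theta$, so that the factor $[(\nabla_{\partial\Omega}\theta(y))^\intercal\nabla E_n(x-y)]\cdot\nu_\Omega(y)$ appears with the correct orientation.
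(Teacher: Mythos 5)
Your proposal is correct and follows exactly the route the paper indicates: the paper's own justification is the one-line remark that \eqref{thesistersact.eq1} ``can be verified by expanding the term $\sum_{i,j=1}^nM_{\partial\Omega,ij,y}\left[(\eta(y)-\eta(x))\theta_i(y)\right]$ \ldots with the help of \eqref{Mtangential},'' and your Leibniz expansion, the identification of the transposed-gradient term via the convention $(\nabla_{\partial\Omega}\theta)_{ij}=\nabla_{\partial\Omega,i}\theta_j$, and the observation that the kernels containing the factor $\nu_\Omega(y)\cdot\nabla E_n(x-y)=O(|x-y|^{1+\alpha-n})$ are absolutely integrable (so their principal values are ordinary integrals) fill in precisely the details the paper leaves to the reader. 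No gaps.
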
 
\end{appendix}

\end{document}